\numberwithin{equation}{section}
\theoremstyle{plain}
\newtheorem{theorem}{Theorem}[section]
\newtheorem{proposition}[theorem]{Proposition}
\newtheorem{lemma}[theorem]{Lemma}
\newtheorem{corollary}[theorem]{Corollary}
\newtheorem{assumption}{Assumption}
\theoremstyle{definition}
\newtheorem{example}[theorem]{Example}
\theoremstyle{remark}
\newtheorem{remark}[theorem]{Remark}
\newcommand{\rev}[1]{\textcolor{black}{#1}}
\newcommand{\cO}{{\mathcal O}}
\newcommand{\IR}{{\mathbb R}}
\newcommand{\IN}{{\mathbb N}}
\newcommand{\IP}{{\mathbb P}}
\newcommand{\E}{\mathbb{E}}
\newcommand{\R}{\mathbb{R}}
\newcommand{\N}{\mathbb{N}}
\newcommand{\Hess}{\ensuremath{\operatorname{Hess}}}
\newcommand{\Trace}{\ensuremath{\operatorname{Trace}}}
\renewcommand{\P}{\mathbb{P}}
\newcommand{\Fc}{\mathcal{F}}
\newcommand{\M}{\mathcal{M}}
\begin{document}

\title[DNNs overcome the CoD for PIDEs]{Deep ReLU neural networks overcome the curse of dimensionality for
partial integrodifferential equations}

\author{Lukas Gonon}
\address{Department of Mathematics, University of Munich, Theresienstrasse 39, 80333 Munich, Germany}
\email{gonon@math.lmu.de}

\author{Christoph Schwab}
\address{Seminar for Applied Mathematics, ETH Z\"urich, R\"amistrasse 101, CH-8092 Z\"urich, Switzerland}
\email{christoph.schwab@sam.math.ethz.ch}

\maketitle
\begin{abstract}
Deep neural networks (DNNs) with ReLU activation
function are proved to be able to 
express viscosity solutions of linear 
partial integrodifferental equations (PIDEs) 
on state spaces of possibly high dimension $d$.
Admissible PIDEs comprise Kolmogorov
equations for high-dimensional diffusion, advection, 
and for pure jump L\'{e}vy processes.
We prove for such PIDEs arising from a class of jump-diffusions on $\IR^d$,
that for any \rev{suitable measure $\mu^d$ on $\IR^d$}, 
there exist constants $C,{\mathfrak{p}},{\mathfrak{q}}>0$ 
such that 
for every $\varepsilon \in (0,1]$ and for every $d\in \IN$
the DNN \rev{$L^2(\mu^d)$}-expression error 
of viscosity solutions of the PIDE is of size $\varepsilon$ 
with 
DNN size bounded by $Cd^{\mathfrak{p}}\varepsilon^{-\mathfrak{q}}$.

In particular, the constant $C>0$ is 
independent of $d\in \IN$ and of $\varepsilon \in (0,1]$ 
and depends only on the coefficients in the PIDE 
and the measure used to quantify the error.
This establishes that 
ReLU DNNs can break the curse of dimensionality (CoD for short)
for viscosity solutions of linear, possibly degenerate 
PIDEs corresponding to \rev{suitable} Markovian jump-diffusion processes. 

As a consequence of the employed techniques we also obtain that 
expectations of a large class of 
path-dependent functionals of the underlying jump-diffusion processes 
can be expressed without the CoD.

\end{abstract}
{\tiny 
\tableofcontents
}

\keywords{Keywords: Deep neural network, Jump-diffusion process, Option pricing, Partial integrdifferential equation, Expression rate, Curse of dimensionality. 
	
Mathematics Subject Classification (2020): 68T07, 45K05, 60H35}
\newpage
\section{Introduction}
\label{sec:Intro}
\subsection{Problem Formulation}
\label{sec:Scope}

Numerous models in science and engineering are based on
stochastic differential equations (SDEs for short) with 
integrators being either diffusions, jump-processes or
a combination of both. 
We mention only financial modelling of markets with exogenous
shocks 
or noisy systems in life-sciences or economics.
In the present paper, we consider a very general class of SDEs which 
comprises many of the models arising in the mentioned applications.

Specifically, for each $x \in \R^d$ for a system with state space
of finite, but possibly high, dimension $d \in \N$ we consider an 
$\R^d$-valued  stochastic process $(X_t^{x,d})_{t \geq 0}$ 
satisfying the stochastic differential equation (SDE for short)
\begin{equation} \label{eq:SDE} 
\begin{aligned}
d X_t^{x,d} 
=
b^d(X_{t_-}^{x,d}) d t 
+ 
\sigma^d(X_{t_-}^{x,d}) d B^d_t 
+ 
\int_{\|z\|<1} f^d(X_{t_-}^{x,d},z) \tilde{N}^d(dt,dz) 
+ 
\int_{\|z\|\geq 1} g^d(X_{t_-}^{x,d},z) N^d(dt,dz),
\end{aligned} 
\end{equation}
for $t >0$ and with $X^{x,d}_0 = x$. 
In \eqref{eq:SDE}, $B^d$ is a $d$-dimensional standard Brownian motion and 
$N^d$ is an independent Poisson random measure on $\R_+ \times (\R^d \setminus \{0\})$ 
with intensity $\nu^d$ and compensated measure $\tilde{N}^d(dt,dz) = N^d(dt,dz) - dt \, \nu^d(dz)$, 
both defined on a filtered probability space $(\Omega,\Fc,\P,(\Fc_t)_{t \geq 0})$ 
satisfying the usual conditions and independent of $\Fc_0$. 
The coefficient functions $b^d \colon \R^d \to \R^d$, 
$\sigma^d \colon \R^d \to \R^{d \times d}$, 
$f^d,g^d \colon \R^d \times (\R^d \setminus \{0\}) \to \R^d$ are assumed to be measurable 
and to satisfy some regularity and growth conditions specified later on. 
Here and throughout, 
unless explicitly stated otherwise, $\| \circ \|$ shall denote the 
Euclidean norm of a vector $x\in \IR^d$, i.e. $\| x \|^2 = \sum_{i=1}^d |x_i|^2$.

The goal of this paper is to derive, \rev{under suitable conditions}, 
deep neural network expression rates for 
functionals of \rev{certain} diffusions with jumps as in \eqref{eq:SDE}
(sometimes also \rev{referred to as} ``jump-diffusions'' or \rev{as} ``It\^o processes'') 
on $\R^d$, with expression rate bounds which are explicit in the state space dimension $d$. 
\rev{This then also yields deep neural network expression rate bounds which are free from the CoD
for viscosity solutions of Kolmogorov PIDEs associated with the process $(X_t^x)_{t \in [0,T]}$. 
These equations are of type 
	\begin{equation}
	\label{eq:LevyPDE}
	\begin{aligned}
	(\tfrac{\partial}{\partial t}u_d)(t,x)
	& = - \tfrac 12 \Trace \! \big(\sigma^d(x)[\sigma^d(x)]^{*}(\Hess_{x} u_d )(t,x)\big) -  \big\langle b^d(x), (\nabla_x u_d) (t,x)\big\rangle 
	\\ 
	& \quad  
	- \int_{\R^d } \left[u_d(t,x+\gamma^d(x,z))-u_d(t,x)-\big\langle \gamma^d(x,z), (\nabla_x u_d) (t,x)\big\rangle \mathbbm{1}_{\{\|z\|< 1\}} 
	\right] \nu^d(d z) 
	\\
	u_d(T,x) &= \varphi_d(x)
	\end{aligned}
	\end{equation}
for $t \in [0,T)$, $x \in \R^d$, with terminal condition $\varphi_d$ and with $\gamma^d(y,z)$ related to $f^d$, 
$g^d$ in \eqref{eq:SDE} according to 
$\gamma^d(y,z) = f^d(y,z) \mathbbm{1}_{\{\|z\|< 1\}} + g^d(y,z) \mathbbm{1}_{\{\|z\|\geq 1\}}$,
see for instance \cite[Section~6.7]{Applebaum2009}.	
The connection between the SDE \eqref{eq:SDE} and the PIDE \eqref{eq:LevyPDE} 
is provided by a suitable Feynman-Kac formula \rev{which is}
well-known 
(e.g. \cite{FnmKacLevyGlau}, \cite[Theorem 3.4]{BBP1997},
\cite[Chap. 6.7.2]{Applebaum2009}, \cite{KharrPham15}).}

We emphasize already at this point that 
we do not assume that $\sigma^d$ in \eqref{eq:SDE} has full rank. 
In \rev{particular}, 
matrix functions $\sigma^d:\IR^d\to \IR^{d\times r}$ for some $0\leq r <d$
are admissible, with $r=0$ corresponding to $\sigma^d=0$, 
by padding $\sigma^d$ with zero entries to a $d\times d$ array.
\rev{O}ur analysis will \rev{also} cover the pure diffusion case, 
where $\tilde{N}^d=0$ and $N^d=0$ in \eqref{eq:SDE}.
\rev{For the combined degeneracy $\sigma^d=0$ and 
    $\tilde{N}^d=0$ and $N^d=0$ in \eqref{eq:SDE}, our results
    cover deterministic, pure advection for which DNN expression rates were studied 
    recently in \cite{Laakmann2020}.
Our results also}
generalize recent expression rate bounds 
from 
\cite{HornungJentzen2018}, \cite{GS20_925}.
The class of processes described by stochastic differential equations \eqref{eq:SDE} 
contains a large class of Markovian semimartingales on $\R^d$, 
see for instance \cite{Jac2003}. 
Specifically,
L\'evy and affine jump-diffusion processes in $\IR^d$, 
and 
various local and stochastic volatility models with jumps are covered. 
These processes are widely used in quantitative finance 
\rev{for} modeling \rev{(log-returns of) prices of risky assets} 
which potentially exhibit jumps
and 
to evaluate hedging strategies of \rev{financial} 
derivatives written on these assets. 
\subsection{Previous Results}
\label{sec:PrevRes}
Classically, in financial applications modelled by stochastic differential equations \eqref{eq:SDE},
a parametric model is chosen within this class 
and subsequently, parameters are calibrated to observed market prices of options. 
The calibrated model is then used to compute prices and hedging strategies of derivatives, 
or these model quantities at least serve as a basis for actual trading decisions. 
For sophisticated models this procedure may require extensive computational resources, 
which has classically limited their usage in practice. 
However, 
a series of recent works exploit computational advances and efficient implementations 
of deep neural networks (DNNs for short) in order to \textit{learn} the steps involved 
in the procedure sketched above (calibration, pricing or hedging)
using deep neural networks, 
see for example 
\cite{Buehler2018}, \cite{Cuchiero2019}, \cite{Horvath2019}, \cite{Bayer2018}, 
\cite{Becker2019}, \cite{Hernandez2017} and the surveys \cite{Ruf2020}, \cite{Germain2021}, \cite{Beck2020}. 
These methods have been shown to work very well 
and are being widely adopted in industry, 
in particular in applications involving large baskets of assets
which corresponds to high dimension $d$.
However,
many questions regarding theoretical foundations of their performance, 
in particular for large $d$, are still open. 
Important progress has been made recently by mathematical results 
on deep neural network approximations for partial differential equations (PDEs)
and option prices in certain stochastic models. 
We refer to, for example,
\cite{EGJS18_787},
\cite{Gonon2019},
\cite{HornungJentzen2018},
\cite{HutzenthalerJentzenKruse2019},
\cite{HJKNvW2020},
\cite{ReisingerZhang2019}, \cite{GS20_925}, \rev{\cite{Gonon2021}}
and the references there.
The results in these references 
show in particular that DNNs are capable of approximating functions 
$u$ of type $x \mapsto u(0,x) = \E[\phi(X_T^x)]$ 
\emph{without the curse of dimensionality (CoD)}.
In these references,
$(X_t^x)_{t \in [0,T]}$ is a diffusion process starting at $X_0^x =x$ 
and in \cite{GS20_925}, \rev{\cite{Gonon2021}} it is a L\'evy process. 
In particular, DNN approximations for models with jumps have only been considered in \cite{GS20_925}, \rev{\cite{Gonon2021}}.
In some of the mentioned works,
the DNN expression rate results are also formulated for $u$ being a 
viscosity solution of the Kolmogorov P\rev{(I)}DE associated with the process $(X_t^x)_{t \in [0,T]}$\rev{, as in \eqref{eq:LevyPDE}}.
Accordingly, we also address this aspect.

In a financial modelling context, the function $x \mapsto u(0,x)$ 
is the price at time $0$ of a derivative with payoff $\phi$ 
at maturity $T$ and underlying $X^x$ with initial price $x$ 
(at least if $\P$ is a risk-neutral measure for $X^x$). 
From a perspective of applications in financial modelling, however, 
it is often relevant not only to learn the prices as a function of the initial value $x$, 
but rather as a function of the parameters specifying the derivative. 
For instance, one is interested in learning (for fixed $x$) 
the strike-to-call price map $K \mapsto \E[(X_T^x-K)^+]$ by a neural network. 
With the exception of \cite{GS20_925} where 
DNN expression rates for geometric L\'{e}vy models were proved
with arguments based on stationarity and time-homogeneity of L\'{e}vy processes 
this question does not seem to have been considered in the literature. 
One of the contributions of this paper 
is to provide alternative proofs, which do not rely on stationarity, 
for deep neural network approximation rates overcoming the CoD.
The present arguments extend also to \textit{parametric} payoffs. 
In addition, 
we introduce several practically relevant features 
not treated in the literature previously: 
we consider \textit{path-dependent options} and assume that the 
stochastic model is driven by a general 
\textit{SDE with jumps} as in \eqref{eq:SDE}.
This comprises the case of L\'evy processes considered in \cite{GS20_925}, \rev{\cite{Gonon2021}},
but also (non stationary)
diffusion-driven models considered in the mentioned references.
\subsection{Contributions}
\label{sec:Contr}
A principal contribution of this paper is a proof that deep ReLU NNs 
of feedforward type 
are able to approximate \rev{viscosity solutions of PIDEs \eqref{eq:LevyPDE} 
with approximation rates that are
free from the CoD. The latter means that the number of DNN parameters needed to achieve approximation accuracy $\varepsilon>0$ grows at most polynomially in $\varepsilon^{-1}$ and in the dimension $d$ of the PIDE. More precisely, under suitable conditions we prove that for any suitable probability measure $\mu^d$ on $\R^d$ 
there exist constants $C,{\mathfrak{p}},{\mathfrak{q}}>0$ 
such that 
for every $\varepsilon \in (0,1]$ and for every $d\in \IN$
the DNN $L^2(\mu^d)$-expression error 
of viscosity solutions of the PIDE is of size $\varepsilon$ 
with 
DNN size bounded by $Cd^{\mathfrak{p}}\varepsilon^{-\mathfrak{q}}$. In particular, the constant $C>0$ is 
independent of $d\in \IN$ and of $\varepsilon \in (0,1]$ 
and depends only on the coefficients in the PIDE 
and the measure $\mu^d$ used to quantify the error. This establishes that 
ReLU DNNs do not suffer from the CoD in the approximation of viscosity solutions of linear, possibly degenerate 
PIDEs corresponding to \rev{suitable} Markovian jump-diffusion processes.}
\rev{
This result partially unifies and extends recent results}
also for diffusions in 
\cite{EGJS18_787},
\cite{Gonon2019},
\cite{HornungJentzen2018},
\cite{HutzenthalerJentzenKruse2019},
\cite{HJKNvW2020},
\cite{ReisingerZhang2019},
\cite{RegDeDeAQ2019}
and the references there. 
\rev{The key situations in which such approximation results are relevant is when the solution of a PIDE needs to be learnt from observational data or when a DNN-based algorithm is employed to numerically solve the PIDE. In particular, in the former case a DNN-based algorithm does not require any knowledge about the coefficients of the underlying PIDE.} 

\rev{
More generally, we prove under suitable conditions that ReLU DNNs are able to approximate expectations of parametric, 
path-dependent functions (respectively option prices) in 
the general class of stochastic differential equations 
with jumps \eqref{eq:SDE} with approximation rates that are
free from the CoD. 
}
The present results \rev{thereby} contribute in particular to an improved theoretical understanding 
of the success of deep learning methods currently employed in high-dimensional
option pricing in finance. \rev{Let us point out that the DNN expression rate bounds for viscosity solutions $x\mapsto u_d(t,x)$ of PIDEs \eqref{eq:LevyPDE} follow from these more general results by specializing to 
initial (respectively terminal) data 
$\varphi_d$ (which is, e.g., a payoff function in option pricing applications)
that only depends on $X^x_T$ (i.e.\ the path-dependent option is in fact a European option) 
with no parametric dependence.}

We also note that in the particular case that $\sigma^d=0$ (no diffusion), $\nu^d = 0$ (no jumps), \eqref{eq:LevyPDE}
reduces to a first order, linear advection equation as was recently considered in \cite[Eqn. (1.1)]{Laakmann2020}. In the case $\nu^d = 0$ (no jumps), \eqref{eq:LevyPDE} reduces to a diffusion equation. \rev{Therefore, Kolmogorov
	equations for high-dimensional diffusion, advection, 
	and for pure jump L\'{e}vy processes are covered}. 
The presently obtained expression results comprise thus in particular those obtained in \cite{Laakmann2020} without source term and partially extend \cite{EGJS18_787}, \cite{Gonon2019}, \cite{HornungJentzen2018}, \cite{HutzenthalerJentzenKruse2019}, \cite{HJKNvW2020}, \cite{ReisingerZhang2019}. \rev{Such DNN expression rate results are a crucial step towards obtaining a full error analysis free from the CoD for neural network-based algorithms for PDEs, as obtained in \cite{Gonon2021} for certain non-degenerate Black-Scholes-type PDEs.} 

\rev{In addition}, the article develops techniques to address DNN approximations of non-local terms as in \eqref{eq:LevyPDE}. These terms need to be handled considerably differently (see the outline below) than the diffusion equations studied in \cite{EGJS18_787}, \cite{Gonon2019}, \cite{HornungJentzen2018}, \cite{HutzenthalerJentzenKruse2019}, \cite{HJKNvW2020}, \cite{ReisingerZhang2019}. The techniques developed here for non-local terms promise to be useful also for \rev{analysing DNN expression rates for} non-linear, non-local PDEs in future research.
\rev{
More specifically, as a key tool for the DNN expression rate results derived in the article we develop novel (strong) approximation results for the Euler-Maruyama method for general SDEs with jumps \eqref{eq:SDE}. 
In several situations, the Euler scheme for stochastic differential equations of type \eqref{eq:SDE} has been well-studied. 
We refer to \cite{higham2005} and \cite{Platen2010} for an extensive treatment of the case 
when the random measure has finite intensity. 
In \cite{protter1997}, \cite{KUHN20192654} the case of L\'evy-driven SDEs 
(which corresponds to a multiplicative structure of $f^d, g^d$) is studied. 
For general Feller processes a convergence result for the Euler scheme (albeit without convergence rates) 
is proved in \cite{boettcher2011}.  
However, on the one hand, from these works it is not straightforward to extract the constants 
(we need here bounds which make explicit the dependence on the dimension) and, on the other hand, 
none of the articles provides convergence rates in the generality as treated here 
(we allow both multiplicative and more general structure for the jump measure). 
Hence, we develop the required error bounds with strong rates, and explicit dependence of 
constants on the dimension $d$ in a self-contained fashion.}

\rev{Finally, let us point out in passing that the results proved here contribute
to the theoretical understanding of approximation capabilities 
of deep neural networks. 
In recent years, 
DNN based approximation schemes have been shown to be able to overcome the 
CoD for approximating certain classes of functions. 
\rev{In general,} these do not include generic smoothness classes (see, e.g., \cite{Mhaskar1996}, \cite{Maiorov1999}, \cite{Yarotsky2017}, \cite{E2021}), 
but in addition to the solutions of PDEs (see the references above) 
these include for instance the function classes introduced in the seminal work \cite{Barron1993} 
and certain compositional functions \cite{Poggio2017WhyAW}. We refer, e.g., to \cite{Bach2017}, \cite{E2022}, \cite{SiegelXu2022} for further results and references regarding such classes of functions.
The present results identify certain families of \textit{non-local} 
PDEs as alternative classes of functions 
which can be approximated DNNs without the CoD. 
}
\subsection{Layout}
\label{sec:Layout}
The main results are contained in Section~\ref{sec:main}. 
In Section~\ref{sec:setting} we introduce the setting and notation used throughout the article. 
Section~\ref{sec:DNNs} introduces notation and definitions on the DNNs which are
used in the approximation results.
In Section~\ref{sec:prelim} we prove some approximation results that are 
needed for the proof of the main results. 
In particular, we derive (strong) approximation results 
for the Euler scheme for the SDE \eqref{eq:SDE} and combine it with further approximation steps 
to truncate the small jumps of the L\'evy measure 
to a set $A_\delta = \{z \in \R^d \colon \|z\|>\delta \} $ for some $\delta > 0$ 
(as e.g.\ in \cite{AsRos2001} for univariate L\'evy processes), 
to approximate the coefficients of the SDE and to 
provide a Monte Carlo approximation of an integral involving $\nu^d$.
In some situations, these approximations are in principle well-known, 
but they are required here in more general situations and 
with dimension-explicit bounds for the proof of our main result. 
Using these approximation results we can approximate the solution of the underlying SDE 
by a process whose sample paths can be emulated by a DNN. 
In the case of a L\'evy-driven SDE the multiplicative structure, \rev{due to which $\int_{\R^d} \gamma^d(X_{t_-}^{x,d},z) \tilde{N}^d(dt,dz) = F^d(X_{t_-}^{x,d})  \int_{\R^d} G^d(z) \tilde{N}^d(dt,dz)$},  allows for a simpler DNN emulation approach.
Hence, in this case (corresponding to Assumption~\ref{ass:jumps}(i) below) the approximation step, 
in which the small jumps of the  L\'evy measure are truncated, 
is not required and we work with $\delta=0$ instead. 
In other words, the two different hypotheses under which we work 
(Assumption~\ref{ass:jumps}(i) and Assumption~\ref{ass:jumps}(ii)) 
require different approximation methods for the proof of the main result, Theorem~\ref{thm:main} below.

\section{Setting and Notation}\label{sec:setting}
This section contains various preparatory ingredients about 
It\^o processes and PIDEs.
We also introduce various assumptions that will be required 
later on for proving DNN expression rate bounds for these processes. 
Finally, we address existence and uniqueness of solutions of 
\eqref{eq:SDE} and \eqref{eq:LevyPDE}. \rev{Throughout the article we let $T>0$ denote a fixed time horizon.}
\subsection{It\^o processes}
\label{sec:Ito}
We recall that 
$B^d$ is a $d$-dimensional standard Brownian motion and
$N^d$ is an independent Poisson random measure on $\R_+ \times (\R^d \setminus \{0\})$
with intensity $\nu^d$ and compensated measure $\tilde{N}^d(dt,dz) = N^d(dt,dz) - dt \, \nu^d(dz)$,
both defined on a filtered probability space $(\Omega,\Fc,\P,(\Fc_t)_{t \geq 0})$
satisfying the usual conditions and independent of $\mathcal{F}_0$.
For each $d \in \N$ and $x\in \IR^d$, 
we consider the SDE  
\begin{equation} \label{eq:SDEnew} 
\begin{aligned}
d X_t^{x,d} 
= 
\beta^d(X_{t_-}^{x,d}) d t 
+ 
\sigma^d(X_{t_-}^{x,d}) d B^d_t 
+ 
\int_{\R^d} \gamma^d(X_{t_-}^{x,d},z) \tilde{N}^d(dt,dz),
\end{aligned} 
\end{equation}	
for some measurable coefficient functions 
$\beta^d:\IR^d\to \IR^d$, $\sigma^d:\IR^d \to \IR^{d\times d}$, 
$\gamma^d:\IR^d \times (\IR^d\backslash \{ 0 \})\to \IR^d$. 
We also recall that $\nu^d$ is a $\sigma$-finite measure on 
$(\IR^d\backslash \{ 0 \},\mathcal{B}(\IR^d\backslash \{ 0 \}))$ 
with respect to which the function $z \mapsto 1 \wedge \|z\|^2$ is integrable. 
To simplify notation we will consider $\nu^d$ as a measure on $\R^d$ with $\nu^d(\{0\})=0$.

Under the integrability conditions that we are going to impose
on $\gamma^d$ in Assumption~\ref{ass:LipschitzGrowth}(ii), 
the SDE \eqref{eq:SDE} 
can always be rewritten as \eqref{eq:SDEnew} and vice versa. 
To simplify notation in what follows we will thus work with the SDE \eqref{eq:SDEnew} 
and formulate our assumptions in terms of the coefficient functions $\beta^d, \sigma^d, \gamma^d$.  
The coefficients of the SDEs \eqref{eq:SDE}, \eqref{eq:SDEnew}
are related via
\begin{equation} \begin{aligned} \label{coeffModified}
 \beta^d(y)  & = b^d(y) + \int_{\|z\|\geq 1} g^d(y,z) \nu^d(dz)
\\ \gamma^d(y,z) & = f^d(y,z) \mathbbm{1}_{\{\|z\|< 1\}} + g^d(y,z) \mathbbm{1}_{\{\|z\|\geq 1\}}.
\end{aligned}
\end{equation}
and conversely 
\begin{equation} \begin{aligned} \label{coeffModified2}
b^d(y)  & = \beta^d(y) - \int_{\|z\|\geq 1} \gamma^d(y,z) \nu^d(dz)
\\ 
f^d(y,z) & = \gamma^d(y,z) \mathbbm{1}_{\{\|z\|< 1\}}, 
          \quad  g^d(y,z) = \gamma^d(y,z) \mathbbm{1}_{\{\|z\|\geq 1\}}.
\end{aligned}
\end{equation}
In addition, inserting \eqref{coeffModified} into the PIDE \eqref{eq:LevyPDE} it can be rewritten as 
\begin{equation}
\label{eq:LevyPDEnew}
\begin{aligned}
(\tfrac{\partial}{\partial t}u_d)(t,x)
& = - \tfrac 12 \Trace \! \big(\sigma^d(x)[\sigma^d(x)]^{*}(\Hess_{x} u_d )(t,x)\big) -  \big\langle \beta^d(x), (\nabla_x u_d) (t,x)\big\rangle
\\ 
& \quad  - \int_{\R^d } \left[u_d(t,x+\gamma^d(x,z))-u_d(t,x)-\big\langle \gamma^d(x,z), (\nabla_x u_d) (t,x)\big\rangle \right] \nu^d(d z) 
\\
u_d(T,x) &= \varphi_d(x).
\end{aligned}
\end{equation}	
\subsection{Regularity and Growth Conditions}
\label{sec:RegGrwt}
We describe regularity and growth conditions on the coefficient functions 
which we shall assume 
for most of the results in the article; we will always state explicitly which Assumptions are required.
\begin{assumption}\label{ass:LipschitzGrowth}[Lipschitz and Growth Conditions]
There exists a constant $L>0$ so that for each $d \in \N$
the coefficient functions satisfy
\begin{itemize}
\item[(i)] [Global Lipschitz condition] 
for all $x,y \in \R^d$ 
\[ \begin{aligned}
	\|\beta^d(x)-\beta^d(y)\|^2+\|\sigma^d(x)-\sigma^d(y)\|_F^2 
        + 
        \int_{\R^d } \|\gamma^d(x,z)-\gamma^d(y,z)\|^2 \nu^d(d z) 
        \leq L \|x-y\|^2,
    \end{aligned} 
\]
\item[(ii)] 
[Linear growth condition]
for all $x \in \R^d$, $i,j \in \{1,\ldots,d\}$,
\[
|\beta^d_i(x)|^2+|\sigma^d_{i,j}(x)|^2 
+ 
\int_{\R^d} |\gamma^d_{i}(x,z)|^2 \nu^d(dz) 
\leq 
L (1+\|x\|^2).	
\]
\end{itemize}
\end{assumption}

Recall that a (strong) solution to \eqref{eq:SDE} 
is a c\`adl\`ag adapted process $X^{x,d}$ taking values in $\R^d$ such that $\P$-a.s.\ 
(the integrated version of) \eqref{eq:SDE} holds for all $t \in [0,T]$.
Assumption~\ref{ass:LipschitzGrowth} guarantees that there exists 
a $\IP$-a.s.\ (pathwise) unique solution to \eqref{eq:SDE}, 
see, e.g.,
\cite[Theorem~6.2.9]{Applebaum2009}, \cite[Chapter~5]{Protter2004}. \rev{The next remark explains in detail why Assumption~\ref{ass:LipschitzGrowth} implies that \cite[Theorem~6.2.9]{Applebaum2009} can be applied.}
\begin{remark} \label{remk:ApplB} 
The assumptions of \cite[Theorem~6.2.9]{Applebaum2009} 
are formulated slightly differently than in Assumption~\ref{ass:LipschitzGrowth} above. 
However, Assumption~\ref{ass:LipschitzGrowth} ensures 
(C1) and (C2) in \cite[Theorem~6.2.9]{Applebaum2009} are satisfied for any fixed $d \in \N$. 
To see this, define the seminorm $\|a\|_1 := \sum_{i=1}^d |a_{i ,i}|$ for $a \in \R^{d \times d}$ 
and the matrices $a(x,y)=\sigma^d(x) (\sigma(y)^d)^\top$,  $x,y \in \R^d$, 
then 
\[ \begin{aligned}
\|a(x,x)-2a(x,y)+a(y,y)\|_1  
& = \sum_{i=1}^d |\sum_{k=1}^d \sigma^d_{i,k}(x) \sigma^d_{i,k}(x) - 2 \sigma^d_{i,k}(x) \sigma^d_{i,k}(y) + \sigma^d_{i,k}(y) \sigma^d_{i,k}(y)|
\\ 
& = \sum_{i=1}^d \sum_{k=1}^d (\sigma^d_{i,k}(x)  - \sigma^d_{i,k}(y))^2 = \|\sigma^d(x)-\sigma^d(y)\|_F^2.
\end{aligned} \]
Therefore, the Lipschitz-condition in Assumption~\ref{ass:LipschitzGrowth}(i) 
coincides with the Lipschitz condition (C1) in \cite[Theorem~6.2.9]{Applebaum2009}. 
In addition, Assumption~\ref{ass:LipschitzGrowth}(ii) implies for all $y \in \R^d$
\[ 
\begin{aligned}
\|\beta^d(y)\|^2 + \|a(y,y)\|_1 + \int_{\R^d} |\gamma^d(y,z)|^2 \nu^d(dz)  
& \leq 2 d L (1+\|y\|^2) + \sum_{i=1}^d \sum_{k=1}^d |\sigma^d_{i,k}(y)|^2 \leq (2 d + d^2) L (1+\|y\|^2).
\end{aligned} 
\]
Thus the growth condition (C2) in \cite[Theorem~6.2.9]{Applebaum2009} is satisfied.
\end{remark}

\begin{remark} 
\label{remk:purejmp}[Pure Jump Process]
Neither Assumption~\ref{ass:LipschitzGrowth} nor the ensuing
Assumptions impose any non-degeneracy condition on the coefficient $\sigma^d$. 
The case of \textit{degenerate} $\sigma^d$ is admissible so that in particular 
the \textit{pure-jump case $\sigma^d =0$} is included in our setting. 
In particular, 
we have assumed without loss of generality that the Brownian motion 
$B^d$ is $d$-dimensional and $\sigma(x) \in \R^{d \times d} $ for $x \in \R^d$. 

Let us now argue that this also covers the case when $B^d$ in \eqref{eq:SDEnew} 
is replaced by an $r$-dimensional Brownian motion $\tilde{B}$ for some $1\leq r < d$
and $\sigma^d$ is replaced by $\tilde{\sigma}$ with $\tilde{\sigma} \colon \R^d \to \R^{d \times r}$. 
Indeed, 
to include this case we simply set 
$\sigma^d_{i,j}(x)= \tilde{\sigma}_{i,j}(x)$ for $i=1,\ldots,d$, $j=1,\ldots,r$ 
and $\sigma^d_{i,j}(x)=0$ otherwise. 
Then 
$\int_0^t \sigma^d(X_{s_-}^{x,d}) d B^d_s = \int_0^t \tilde{\sigma}(X_{s_-}^{x,d}) d \tilde{B}_s$ 
and so SDE \eqref{eq:SDEnew} coincides with the modified SDE. 
\end{remark}
\begin{remark}
\label{remk:puredrift}[Pure Drift Process, Linear Advection]
Assumption ~\ref{ass:LipschitzGrowth} and the ensuing 
Assumptions \ref{ass:PointwiseLipschitz}, \ref{ass:jumps}, \ref{ass:NNApproxCoeff} admit in \eqref{eq:SDE} 
and in all expression rate estimates in Section \ref{sec:main} ahead 
also the case of deterministic, initial-value ODEs, where in \eqref{eq:SDE} 
$\sigma^d = 0$, $f^d = g^d = 0$. 
In this case, the PIDE \eqref{eq:LevyPDE} 
reduces to a (deterministic) linear transport equation.
Theorem \ref{thm:main} and Corollary \ref{cor:main} ahead therefore
apply also to this setting. 
Our expression rate results therefore comprise 
the CoD-free DNN expression rate bounds obtained recently in \cite{Laakmann2020}
(albeit in pure drift case with less explicit bounds on the exponents $\mathfrak{p}$ 
and $\mathfrak{q}$ in Theorem \ref{thm:main} ahead than in \cite{Laakmann2020}).
\end{remark}
\begin{remark}
\rev{The Lipschitz condition in Assumption~\ref{ass:LipschitzGrowth} requires, in particular, that$ \|\sigma^d(x)-\sigma^d(y)\|_F^2 
\leq L \|x-y\|^2$ with Lipschitz constant $L$ that does not depend on $d$. Intuitively, this means that, as $d$ increases,  only $\cO(d)$ components of the matrix $\sigma^d(\cdot)$ are not constant -- at least up to an orthogonal transformation that may depend on $d$, by invariance of $\|\cdot\|_F$ with respect to such transformations. This hypothesis on $\sigma^d$ covers many relevant high-dimensional examples, for instance from mathematical finance: it includes heat type equations ($\sigma^d$ constant as considered in \cite{Gonon2019}), Black-Scholes type equations ($\sigma^d(x)=\mathrm{diag}(\beta_{d,1} x_1,\ldots,\beta_{d,d} x_d) B^d$ with $\sup_{d \in \N, i \in \{1,\ldots,d\}} |\beta_{d,i}| < \infty$ and $B^d \in \R^{d \times d}$ satisfying for all $i \in \{1,\ldots,d\}$ that $\sum_{j=1}^d |B^d_{i,j}|^2=1$, as considered in \cite{HornungJentzen2018}) and exponential L\'evy-models ($\sigma^d(x)= \mathrm{diag}(x_1,\ldots,x_d) \sqrt{A^d}$ with $\sup_{d \in \N, i,j \in \{1,\ldots,d\}} |A_{i,j}^d| < \infty$ as considered in \cite{GS20_925}). However, it is not limited to such models and also covers coefficients $\sigma^d$ that have a band-structure (tridiagonal, ...), i.e., coefficients for which there exists $m \in \N_{0}$ such that for all $d \in \N$, $x \in \R^d$, $i,j \in \{1,\ldots,d\}$ with $|i-j|>m$ it holds that $\sigma^d(x)_{i,j}=0$. Further examples include matrices with sufficiently strong off-diagonal decay, other types of sparsity or high-dimensional financial models driven  effectively by only $m < d$ assets, as often observed in practice and discussed, e.g.,  in \cite{Hilber2009}. 
}	
\end{remark}
When studying the PIDE \eqref{eq:LevyPDE} 
an additional hypothesis will be used to ensure well-posedness of the PIDE.

\begin{assumption}\label{ass:PointwiseLipschitz}[Pointwise Lipschitz and integrability condition] 
For each $d \in \N$ there exists a constant $C_1(d)>0$ such that
	%
	for all $x,y\in \IR^d, z\in \IR^d\backslash\{ 0 \}$ holds
	\[\begin{aligned}	 
	\| \gamma^d(x,z) \| & \leq C_1 (1\wedge \| z \|)  \;, \\
	\| \gamma^d(x,z) - \gamma^d(y,z) \| & \leq C_1 \| x-y \|(1\wedge \| z \|) \;.
	\end{aligned}
	\]
\end{assumption}

\begin{remark} \label{remk:Condo}
\rev{In contrast to Assumption~\ref{ass:LipschitzGrowth}, the} constant $C_1(d)$ in Assumption \ref{ass:PointwiseLipschitz}
may depend on the dimension $d$ in an unspecific way \rev{and it does not appear in the estimates below}. 
For each fixed $d$, however, \rev{the pointwise Lipschitz and growth conditions on $\gamma^d$ formulated in} Assumption \ref{ass:PointwiseLipschitz} 
\rev{are} stronger than the \rev{Lipschitz and growth} conditions imposed on $\gamma^d$ in Assumption~\ref{ass:LipschitzGrowth}, 
as one easily verifies using the fact that $\int_{\R^d } (1\wedge \| z \|)^2 \nu^d(d z) < \infty$.
Assumption  \ref{ass:PointwiseLipschitz} is identical to the
pointwise assumption \cite[{Section~1}]{BBP1997}.
It is shown in \cite[Theorem 3.5]{BBP1997} 
to ensure uniqueness of viscosity solutions of polynomial growth 
for the PIDE \eqref{eq:LevyPDE}. 
Overcoming the CoD in DNN approximation rate bounds
requires 
conditions (i) and (ii) in  Assumption \ref{ass:LipschitzGrowth} on the precise $d$-(in)dependence.
\end{remark}
In order to derive neural network expression rates of solutions, 
we will express the jump part in the Euler scheme as a neural network. 
This is straightforward in the case of a L\'evy-driven SDE, 
which corresponds to Assumption~\ref{ass:jumps}(i) below. 
Alternatively, 
under a certain non-degeneracy condition (see Assumption~\ref{ass:jumps}(ii) below), 
we will be able to carry out a compound Poisson approximation of the small jumps also for general diffusions with jumps.
\begin{assumption} \label{ass:jumps}
	At least one of the following two conditions holds:
	\begin{itemize}
	\item[(i)] (L\'evy-driven SDE) 
         For all $d \in \N$ there exist functions $F^d \colon \R^d \to \R^{d \times d}$, $G^d \colon \R^d \to \R^d$ 
         such that for all $y,z \in \R^d$ 
		\[
		\gamma^d(y,z) = F^d(y)G^d(z).
		\] 
		\item[(ii)]  (Dimension-explicit control of small jumps) 
         There exist 
         $\tilde{L},\bar{p},\bar{q}>0$ such that for all $d \in \N, x \in \R^d, \delta \in (0,1)$
	\begin{align}\label{eq:smallJumpDecay}
	& \int_{\|z\|\leq \delta} \|\gamma^d(x,z)\|^2 \nu^d(dz) \leq \tilde{L} \delta^{\bar{p}} d^{\bar{q}}  (1+\|x\|^2),
	\\  \label{eq:LevymeasureGrowth} 
           &  \int_{\R^d } (1 \wedge \|z\|^2) \nu^d(d z) \leq \tilde{L} d^{\bar{q}}.
	\end{align}
	\end{itemize}
\end{assumption}
Condition \eqref{eq:LevymeasureGrowth}  requires that the 
L\'evy integral $\int_{\R^d } (1 \wedge \|z\|^2) \nu^d(d z)$ 
only grows polynomially in the dimension $d \in \N$.
To further illustrate condition \eqref{eq:smallJumpDecay}  
we now provide a sufficient condition 
for Assumption~\ref{ass:jumps}(ii).
Condition \eqref{eq:smallJumpDecaySuff} requires a stable-like behaviour at the origin. 
\begin{example}
Suppose there exists $\rho \in (0,2), \tilde{L}>0, \bar{q}>0$ such that 
for all $d \in \N, x \in \R^d$ condition \eqref{eq:LevymeasureGrowth} holds 
and
		\begin{equation}\label{eq:smallJumpDecaySuff}
\int_{\|z\|\leq 1} \frac{\|\gamma^d(x,z)\|^2}{\|z\|^\rho} \nu^d(dz) \leq \tilde{L} d^{\bar{q}} (1+\|x\|^2).  
\end{equation}
Then Assumption~\ref{ass:jumps}(ii) is satisfied. Indeed, for any  $d \in \N, x \in \R^d, \delta \in (0,1)$ we estimate
\begin{equation}\label{eq:smallJumpDecaySuff2} 
\begin{aligned}
\int_{\|z\|\leq \delta} \|\gamma^d(x,z)\|^2 \nu^d(dz)
& = \int_{\|z\|\leq \delta} \frac{\|\gamma^d(x,z)\|^2}{\|z\|^\rho} \|z\|^\rho \nu^d(dz)
\\ &  \leq \delta^\rho \int_{\|z\|\leq 1} \frac{\|\gamma^d(x,z)\|^2}{\|z\|^\rho} \nu^d(dz) 
\\ &  \leq \tilde{L} \delta^{\rho} d^{\bar{q}}  (1+\|x\|^2).
\end{aligned}
\end{equation}
\end{example}

\subsection{Existence and Uniqueness}
\label{sec:ExUniq}
Assumptions \ref{ass:LipschitzGrowth} and \ref{ass:PointwiseLipschitz} ensure existence
and uniqueness of solutions of both the SDEs \eqref{eq:SDE}, \eqref{eq:SDEnew}
and the Kolmogorov equation \eqref{eq:LevyPDE}. We briefly recapitulate 
the corresponding results, going back to \cite[Theorems 2 and 3]{FujiKuni85}, from \cite{BBP1997}. 
In case of the SDEs \eqref{eq:SDE}, \eqref{eq:SDEnew} Assumption~\ref{ass:LipschitzGrowth} 
is sufficient to guarantee existence and uniqueness of solutions, by \cite[Theorem~6.2.9]{Applebaum2009}, 
as pointed out in Section~\ref{sec:RegGrwt}.
\begin{proposition}\label{eq:ExUniqSDE}
Under Assumption \ref{ass:LipschitzGrowth}, 
\eqref{eq:SDE} and \eqref{eq:SDEnew} each admit a unique global solution.
\end{proposition}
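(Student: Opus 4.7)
The plan is to apply the existence and uniqueness theorem \cite[Theorem~6.2.9]{Applebaum2009} to the reformulated SDE \eqref{eq:SDEnew} and then transfer the result to \eqref{eq:SDE} via the coefficient correspondence \eqref{coeffModified}--\eqref{coeffModified2}. Since $d \in \IN$ is fixed throughout, no dimension-explicit constants are needed here.

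First, I would verify that the Lipschitz-type hypothesis (C1) and the linear growth hypothesis (C2) of \cite[Theorem~6.2.9]{Applebaum2009} are implied by Assumption~\ref{ass:LipschitzGrowth}. This is essentially carried out in Remark~\ref{remk:ApplB}: writing $a(x,y) = \sigma^d(x)[\sigma^d(y)]^{\top}$ and using the seminorm $\|\cdot\|_1$ on $\R^{d\times d}$, the identity
\[
\|a(x,x) - 2a(x,y) + a(y,y)\|_1 = \|\sigma^d(x) - \sigma^d(y)\|_F^2
\]
reduces (C1) to Assumption~\ref{ass:LipschitzGrowth}(i), while an elementary componentwise estimate of $\|\beta^d(y)\|^2 + \|a(y,y)\|_1 + \int_{\R^d} \|\gamma^d(y,z)\|^2 \nu^d(dz)$ against $(2d + d^2)L(1+\|y\|^2)$ reduces (C2) to Assumption~\ref{ass:LipschitzGrowth}(ii). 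Invoking \cite[Theorem~6.2.9]{Applebaum2009} then yields a pathwise unique c\`adl\`ag adapted strong solution $X^{x,d}$ of \eqref{eq:SDEnew} for every initial condition $x \in \R^d$.

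Second, I would show that existence and uniqueness for \eqref{eq:SDEnew} is equivalent to that for \eqref{eq:SDE}. The key observation is that the integrability condition in Assumption~\ref{ass:LipschitzGrowth}(ii) together with the standard fact $\nu^d(\{\|z\|\geq 1\}) < \infty$ for any L\'evy measure gives, via Cauchy--Schwarz,
\[
\int_{\|z\|\geq 1} \|\gamma^d(y,z)\| \, \nu^d(dz) \leq \sqrt{\nu^d(\{\|z\|\geq 1\})} \left(\int_{\R^d} \|\gamma^d(y,z)\|^2 \nu^d(dz)\right)^{1/2} < \infty
\]
for every $y \in \R^d$. Hence the compensation integral $\int_{\|z\|\geq 1} g^d(y,z)\,\nu^d(dz)$ appearing in \eqref{coeffModified} is finite for all $y$, so the coefficient transformations \eqref{coeffModified}--\eqref{coeffModified2} are well-defined pointwise. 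Since the Poisson integral of $g^d$ over $\{\|z\|\geq 1\}$ is a finite sum of jumps $\P$-a.s., a process $X^{x,d}$ satisfies (the integrated form of) \eqref{eq:SDEnew} if and only if it satisfies (the integrated form of) \eqref{eq:SDE} with coefficients related by \eqref{coeffModified}--\eqref{coeffModified2}; existence and pathwise uniqueness therefore transfer from one formulation to the other.

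There is no substantive obstacle: the verification of (C1)--(C2) has already been carried out in Remark~\ref{remk:ApplB}, and the equivalence of the two SDE formulations is a routine consequence of the integrability built into Assumption~\ref{ass:LipschitzGrowth}(ii). The proof is therefore little more than a bookkeeping statement citing \cite[Theorem~6.2.9]{Applebaum2009} and recording the coefficient correspondence.
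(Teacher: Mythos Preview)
Your proposal is correct and follows essentially the same approach as the paper: the paper's proof is simply a citation of \cite[Theorem~6.2.9]{Applebaum2009} together with a pointer to Remark~\ref{remk:ApplB} for the verification of (C1)--(C2), and your argument spells out exactly this, with the added detail on the equivalence of the two SDE formulations via \eqref{coeffModified}--\eqref{coeffModified2} that the paper treats as already established in Section~\ref{sec:Ito}.
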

This result is, 
with Assumption \ref{ass:LipschitzGrowth}, 
\cite[Theorem~6.2.9]{Applebaum2009} (cf.\ the discussion in Remark~\ref{remk:ApplB}). 
With Assumptions \ref{ass:LipschitzGrowth} and \ref{ass:PointwiseLipschitz},
the result is \cite[Proposition 1.1]{BBP1997}.
We also note that the SDEs \eqref{eq:SDE}, \eqref{eq:SDEnew} 
are contained in the abstract backward SDE setting of \cite{BBP1997} with 
$f_i=0$ and $\gamma_i = 0$ in \cite[(A.2)]{BBP1997}.
This implies 
that all items in 
Assumptions \cite[(A.1), (A.2)]{BBP1997} are trivially satisfied
and all conclusions of \cite{BBP1997} apply in the present setting 
under Assumptions \ref{ass:LipschitzGrowth} and \ref{ass:PointwiseLipschitz}.
\begin{proposition}\label{prop:ExUniq} 
Let $\varphi_d \colon \R^d \to \R$ be continuous and at most polynomially growing.
Under Assumptions  \ref{ass:LipschitzGrowth} and \ref{ass:PointwiseLipschitz}
there exists a unique viscosity solution (in the sense of \cite[Definition 3.1]{BBP1997})
of the PIDEs \eqref{eq:LevyPDE}, \eqref{eq:LevyPDEnew} with polynomial growth as $|x|\to \infty$.
\end{proposition}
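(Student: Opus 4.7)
My plan is to embed the PIDEs \eqref{eq:LevyPDE} and \eqref{eq:LevyPDEnew} into the abstract forward–backward semilinear framework of \cite{BBP1997}, specialized to trivial nonlinearity, and to then invoke their existence and comparison theorems directly. As a first step, I would take the forward component of the BBP system to be the SDE \eqref{eq:SDEnew} and set the backward drivers $f_i$ and $\gamma_i$ in \cite[(A.2)]{BBP1997} identically to zero. With this choice the hypotheses (A.1)--(A.2) of \cite{BBP1997} reduce exactly to Lipschitz and growth conditions on the forward coefficients $\beta^d,\sigma^d,\gamma^d$, which are supplied by Assumptions \ref{ass:LipschitzGrowth} and \ref{ass:PointwiseLipschitz}. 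In particular, the pointwise estimate $\|\gamma^d(x,z)\|\le C_1(1\wedge\|z\|)$, combined with integrability of $(1\wedge\|z\|^2)$ against $\nu^d$, ensures that the nonlocal integrand in \eqref{eq:LevyPDE} is well-defined on the $C^2$ test functions of polynomial growth used in the viscosity formulation of \cite[Definition~3.1]{BBP1997}.

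Next, I would construct the solution by a Feynman--Kac representation. By Proposition~\ref{eq:ExUniqSDE}, for each $t\in[0,T]$ and $x\in\R^d$ the SDE \eqref{eq:SDEnew} started at time $t$ from $x$ admits a unique strong solution $X^{t,x,d}$, and I would define
\[
u_d(t,x):=\E\bigl[\varphi_d(X_T^{t,x,d})\bigr].
\]
Polynomial growth of $u_d$ in $x$ would follow from standard moment estimates: applying Gronwall's lemma to $\E[\sup_{s\in[t,T]}\|X_s^{t,x,d}\|^{2k}]$ together with the Burkholder--Davis--Gundy inequality for the compensated jump integral and the linear growth condition from Assumption~\ref{ass:LipschitzGrowth}(ii) yields $\E[\|X_T^{t,x,d}\|^{2k}]\le C_k(1+\|x\|^{2k})$ for every $k\in\N$. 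The nonlocal Feynman--Kac result of \cite[Theorem~3.4]{BBP1997}, specialized to the linear driver $f=0$, then identifies $u_d$ as a viscosity solution of \eqref{eq:LevyPDE}, \eqref{eq:LevyPDEnew} in the sense of \cite[Definition~3.1]{BBP1997}.

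Uniqueness within the class of viscosity solutions of polynomial growth would then follow from the comparison principle \cite[Theorem~3.5]{BBP1997}. The pointwise Lipschitz estimate $\|\gamma^d(x,z)-\gamma^d(y,z)\|\le C_1\|x-y\|(1\wedge\|z\|)$ in Assumption~\ref{ass:PointwiseLipschitz} is precisely the structural assumption needed in BBP's doubling-of-variables argument in order to control the contribution of the nonlocal term when testing sub- and supersolutions against $\|x-y\|^2$-type auxiliary functions; the weaker $L^2(\nu^d)$-type bound from Assumption~\ref{ass:LipschitzGrowth}(i) alone would not suffice for this step, which is why the pointwise hypothesis is introduced precisely at this point in the paper.

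The main obstacle is bookkeeping: carefully translating the notation of \cite{BBP1997} into the present framework and verifying line by line that (A.1)--(A.2) there reduce to Assumptions~\ref{ass:LipschitzGrowth} and \ref{ass:PointwiseLipschitz}, together with checking that the polynomial growth class is stable under the BBP arguments. Once this translation is carried out, existence and uniqueness become immediate consequences of \cite[Theorems~3.4 and 3.5]{BBP1997}. I emphasize that the $d$-dependence of the constants plays no role in this proposition, since it is a qualitative, dimension-fixed well-posedness statement; the explicit tracking of $d$ will only become essential in the DNN expression rate analysis in Section~\ref{sec:main} that uses this proposition as input.
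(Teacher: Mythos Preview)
Your proposal is correct and follows essentially the same route as the paper: embed the PIDE into the framework of \cite{BBP1997} by setting $f_i=0$ and $\gamma_i=0$ in \cite[(A.2)]{BBP1997}, verify that Assumptions~\ref{ass:LipschitzGrowth} and~\ref{ass:PointwiseLipschitz} imply the standing hypotheses of \cite[Section~1]{BBP1997}, and then cite \cite[Proposition~2.5]{BBP1997} for polynomial growth, \cite[Theorem~3.4]{BBP1997} for existence, and \cite[Theorem~3.5]{BBP1997} for uniqueness. Your additional detail on the higher moment bound via Gr\"onwall and BDG simply makes explicit what the paper delegates to \cite[Proposition~2.5]{BBP1997}.
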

This assertion is \cite[Proposition 2.5]{BBP1997} (polynomial growth) and 
\cite[Theorem 3.4]{BBP1997} (Existence) and \cite[Theorem 3.5]{BBP1997} (Uniqueness), 
upon observing that 
\eqref{eq:LevyPDEnew} coincides with \cite[(3.1)]{BBP1997} since $f_i =0$.
In particular,
Assumptions \ref{ass:LipschitzGrowth} and \ref{ass:PointwiseLipschitz} 
imply the assumptions in \cite[Section 1]{BBP1997}.
\section{Deep neural networks (DNNs)}
\label{sec:DNNs}
We present in Section \ref{sec:DNNDef} 
notation and assumptions on the deep neural networks (DNNs) on which the ensuing approximation
rate estimates of path-dependent functionals of the SDE \eqref{eq:SDEnew} and 
viscosity solutions of the PIDE \eqref{eq:LevyPDE} will be based. 
Section \ref{sec:DNNCoeff} then provides our precise assumptions on the expression 
rates of the coefficients in the SDE \eqref{eq:SDEnew} respectively PIDE \eqref{eq:LevyPDE}.
\subsection{Notation and Definitions of DNNs}
\label{sec:DNNDef}
Throughout the article we will consider deep neural networks with the ReLU activation function
$\varrho \colon \R \to \R$ given by $\varrho(x)= x_+ := \max(x,0)$.  \rev{This choice is not essential for the results, which also hold for more general activation functions. From the point of view of applications in mathematical finance, however, the ReLU activation function is the most natural choice, see, e.g.\ Remark~\ref{rmk:European} below.}
For any $d \in \N$ we can lift $\varrho$ to a mapping $\R^d \to \R^d$ 
by the specification $x \mapsto (\varrho(x_i))_{i=1,\ldots,d}$. 
We denote this mapping also by the same symbol $\varrho$. 

Let $d, L \in \N$, $N_0:=d$, $N_1,\ldots,N_L \in \N$ and $b^\ell \in \R^{N_\ell}$, 
$A^\ell \in \R^{N_\ell \times N_{\ell-1}}$ for $\ell = 1,\ldots,L$. 
A (feedforward) deep neural network (DNN) with 
activation function $\varrho$, $L$ layers, $d$-dimensional input, 
weight matrices $A^1,\ldots,A^L$ and biases $b^1,\ldots,b^L$ is the function $\phi \colon \R^d \to \R^{N_L}$
\begin{equation}\label{eq:phidef}
\phi(x) = W_L \circ (\varrho \circ W_{L_1}) \circ \cdots \circ (\varrho \circ W_1)(x), \quad x \in \R^d, 
\end{equation}
where 
$W_\ell \colon \R^{N_{\ell-1}} \to \R^{N_{\ell}}$ 
denotes the affine map
$W_\ell(y) = A^{\ell} y + b^{\ell}$ for $y \in \R^{N_{\ell-1}}$ and $\ell = 1,\ldots,L$. 
Such a function is often simply called a deep neural network. 
The total number of non-zero entries of the weights and biases is called the \textit{size} of the DNN. 
Thus, for a DNN as above we let
\begin{equation}\label{eq:sizedef}
\mathrm{Size}(\phi) := |\{(i,j,\ell) \colon A^\ell_{i,j} \neq 0 \}| + |\{(i,\ell) \colon b^\ell_{i} \neq 0 \}|. 
\end{equation}
We also denote by 
$\mathrm{Size}_{out}(\phi) :=  |\{(i,j) \colon A^L_{i,j} \neq 0 \}| + |\{i \colon b^L_{i} \neq 0 \}|$ 
the number of non-zero entries of the weights and biases of the last layer of the DNN. 
Finally, we denote by $\mathrm{depth}(\phi):=L+1$ the number of layers of the DNN. 

A DNN is often defined as collection of parameters 
$\Phi = ((A^1,b^1),\ldots,(A^L,b^L))$, distinct from
the function $\phi$ in \eqref{eq:phidef} built from $\Phi$.
The latter is referred to as \emph{realization of the DNN $\Phi$}. 
See, e.g., \cite{PETERSEN2018296}, \cite{Opschoor2020}, \cite{GS20_925}. 
Here we follow the notationally lighter approach of \cite{OSZ19_839} 
and do not distinguish between the neural network and its parameter set, 
as the parameter set is (always at least implicitly) part of the definition. 
Note that there may be several parameter choices that lead to the same realization.
In the expression rate bounds under consideration in the present article,
this is not an issue and pathological choices are excluded by the requirements 
that we impose on DNN size. 
\subsection{DNN approximations of the coefficients}
\label{sec:DNNCoeff}
We introduce assumptions on the DNN approximation for the coefficients. 
In the case of a L\'evy-driven SDE \eqref{eq:SDE} (i.e., when Assumption~\ref{ass:jumps}(i) holds), 
these assumptions mean that $F^d$ can be approximated well by a neural network. 
In general, $\gamma^d$ is approximated by a neural network. 
\begin{assumption}
\label{ass:NNApproxCoeff}
[NN expression rates of coefficient functions $\beta^d,\sigma^d,\gamma^d$]
Assumption~\ref{ass:jumps} holds and 
there exist constants $C>0$, $p,q,\rev{\hat{q}}\geq 0$ and, for each $d \in \mathbb{N}$, 
and 
for each $\varepsilon \in (0,1]$, 
there exist neural networks $\beta_{\varepsilon,d} \colon \R^d \to \R^d$, 
$\sigma_{\varepsilon,d,j} \colon \R^d \to \R^d$, $j=1,\ldots,d$, 
and functions $\gamma_{\varepsilon,d} \colon \R^d \times \R^d \to \R^d$, 
\rev{such that}
for each $d \in \mathbb{N}$, $\varepsilon \in (0,1]$ \rev{it holds that}
        \begin{itemize} 
        \item[(i)] for all $x \in \R^d$ 
		\[ 
                \begin{aligned}
		\|\beta^d(x)-\beta_{\varepsilon,d}(x)\|^2+\|\sigma^d(x)-\sigma_{\varepsilon,d}(x)\|_F^2 
                 +  
                 \int_{\R^d}\|\gamma^d(x,z)-\gamma_{\varepsilon,d}(x,z)\|^2 \nu^d(d z) 
                 & \leq \varepsilon^{4q+1} C d^p (1+\|x\|^2),
                \\
		\|\beta_{\varepsilon,d}(x)\|^{\rev{2}}+\|\sigma_{\varepsilon,d}(x)\|_F^{\rev{2}}
                +  
                \int_{\R^d} \|\gamma_{\varepsilon,d}(x,z)\|^2 \nu^d(d z)
                & \leq  C (d^p \varepsilon^{-q} + \|x\|^{\rev{2}}),		
		\\ 
		\mathrm{size}(\beta_{\varepsilon,d}) + \sum_{j=1}^d \mathrm{size}(\sigma_{\varepsilon,d,j}) 
                 & \leq C d^p \varepsilon^{-\rev{\hat{q}}},
		\end{aligned} 
                \]
	\item[(ii)] \begin{itemize} \item \rev{if} Assumption~\ref{ass:jumps}(i) holds, 
                then $\gamma_{\varepsilon,d}(y,z) = F_{\varepsilon,d}(y) G^d(z)$ 
                for DNNs $F_{\varepsilon,d,j} \colon \R^d \to \R^d$, $j=1,\ldots,d$ 
                satisfying $\mathrm{size}(F_{\varepsilon,d,j}) \leq C d^p \varepsilon^{-\rev{\hat{q}}}$\rev{,} 
	\item \rev{o}therwise Assumption~\ref{ass:jumps}(ii) 
  holds and furthermore $\gamma_{\varepsilon,d}$ is a DNN
  with $\mathrm{size}(\gamma_{\varepsilon,d}) \leq C d^p \varepsilon^{-\rev{\hat{q}}}$. 
		\end{itemize}
	\end{itemize}
\end{assumption}
In the case of a L\'evy-driven SDE the 
conditions on the function $\gamma_{\varepsilon,d}$ 
which we imposed in (i) are in fact conditions on $F_{\varepsilon,d}$.
\section{Dimension-explicit bounds for SDEs with jumps}
\label{sec:prelim}
This section provides approximations for the Euler scheme for the SDE \eqref{eq:SDE} 
as well as further approximation steps to truncate the small jumps of the L\'evy measure 
to a set 
$A_\delta = \{z \in \R^d \colon \|z\|>\delta \}$ for some $\delta > 0$, 
to approximate the coefficients of the SDE and to 
provide a Monte Carlo approximation of an integral involving $\nu^d$. 

These approximations and bounds on errors incurred by them
are needed for the proof of the main results in Section~\ref{sec:MainRes} 
in order to approximate the underlying SDE by a process whose sample paths can be emulated by a DNN. 
In the case of a L\'evy-driven SDE, the multiplicative structure 
allows for a simpler DNN emulation approach and hence 
the small jumps of the L\'evy process do not need to be truncated, 
i.e., we may use $\delta=0$. 
Thus, two different approaches are used for the 
two alternative hypotheses Assumption~\ref{ass:jumps}(i) 
and Assumption~\ref{ass:jumps}(ii), corresponding to choosing $\delta=0$ and $\delta>0$ below.
\subsection{Discrete-time approximation}
\label{sec:timestp}
The following auxiliary results are crucial ingredients for our subsequent analysis of DNN expression rates. 

We start with a lemma that provides bounds on the moments of $X^{x,d}$ 
and shows that under Assumption~\ref{ass:LipschitzGrowth} 
the second moments grow at most polynomially in $d$ and $\|x\|$.
\rev{The constants $c_1,c_2 >0$ in Lemma~\ref{lem:momentBound} only depend on $L$ and $T$. Without stronger assumptions the dependence on $L$ and $T$ may be exponential in general.} 
\begin{lemma}\label{lem:momentBound}
Suppose Assumption~\ref{ass:LipschitzGrowth} holds. 
Then there exist constants $c_1,c_2 >0$ 
such that for all $d \in \N, x \in \R^d, t \in [0,T]$, 
\begin{equation}\label{eq:momentBound}
\E[\|X^{x,d}_t\|^2] \leq c_1 \|x\|^2 + c_2 d^2.
\end{equation}
\end{lemma}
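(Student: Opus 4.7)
The plan is to estimate $f(t) := \E[\|X^{x,d}_t\|^2]$ directly from the integrated form of the SDE \eqref{eq:SDEnew} and then close the bound with Grönwall's inequality. I would write
\[
X^{x,d}_t = x + \int_0^t \beta^d(X^{x,d}_{s_-})\, ds + \int_0^t \sigma^d(X^{x,d}_{s_-})\, dB^d_s + \int_0^t \int_{\R^d} \gamma^d(X^{x,d}_{s_-},z)\, \tilde{N}^d(ds,dz),
\]
take squared norms, and use the elementary inequality $\|a+b+c+d\|^2 \le 4(\|a\|^2+\|b\|^2+\|c\|^2+\|d\|^2)$. For the drift term I would apply Cauchy--Schwarz in time; for the Brownian stochastic integral the Itô isometry $\E\|\int_0^t \sigma^d(X^{x,d}_{s_-}) dB^d_s\|^2 = \E\int_0^t \|\sigma^d(X^{x,d}_{s_-})\|_F^2\, ds$; and for the compensated Poisson integral the corresponding isometry $\E\|\int_0^t\!\int_{\R^d} \gamma^d(X^{x,d}_{s_-},z)\tilde{N}^d(ds,dz)\|^2 = \E\int_0^t\!\int_{\R^d}\|\gamma^d(X^{x,d}_{s_-},z)\|^2 \nu^d(dz)\, ds$.

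Next I would translate the componentwise linear growth condition of Assumption \ref{ass:LipschitzGrowth}(ii) into the vector/matrix bounds needed. Summing $|\beta^d_i(x)|^2 \le L(1+\|x\|^2)$ over $i$ gives $\|\beta^d(x)\|^2 \le dL(1+\|x\|^2)$; summing $|\sigma^d_{i,j}(x)|^2 \le L(1+\|x\|^2)$ over $(i,j)$ gives $\|\sigma^d(x)\|_F^2 \le d^2 L(1+\|x\|^2)$; and summing $\int_{\R^d}|\gamma^d_i(x,z)|^2 \nu^d(dz) \le L(1+\|x\|^2)$ over $i$ gives $\int_{\R^d}\|\gamma^d(x,z)\|^2 \nu^d(dz) \le dL(1+\|x\|^2)$. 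The $d^2$ scaling from the Frobenius norm of $\sigma^d$ is the dominant dimension factor and is what forces the $d^2$ term in \eqref{eq:momentBound}.

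Substituting these bounds, I obtain an inequality of the form
\[
f(t) \le 4\|x\|^2 + K_1(L,T)\, d^2 + K_2(L,T)\int_0^t f(s)\, ds, \qquad t \in [0,T],
\]
with constants $K_1, K_2$ depending only on $L$ and $T$ (the $d^2$ coming from the $\|\sigma^d\|_F^2$ term, while the $d$ and $d^2$ from the drift and jump terms are absorbed). Grönwall's lemma then yields $f(t) \le (4\|x\|^2 + K_1 d^2)\, e^{K_2 T}$, which is exactly the claimed bound with $c_1 := 4 e^{K_2 T}$ and $c_2 := K_1 e^{K_2 T}$.

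The main technical obstacle is justifying that the Brownian and compensated Poisson integrals are genuine $L^2$-martingales with mean zero, which is needed to apply the Itô isometries. A priori the integrands $\sigma^d(X^{x,d}_{s_-})$ and $\gamma^d(X^{x,d}_{s_-},\cdot)$ need not be square-integrable along the path until we know $f$ is finite. The standard remedy is to localize: introduce stopping times $\tau_n := \inf\{t\ge 0 : \|X^{x,d}_t\| \ge n\}$, carry out the argument above for $X^{x,d}_{t\wedge \tau_n}$ (on which all integrands are bounded, so the isometries apply and $f_n(t) := \E\|X^{x,d}_{t\wedge\tau_n}\|^2$ is finite), obtain the Grönwall estimate with constants independent of $n$, and let $n\to\infty$ using Fatou's lemma together with the fact that $\tau_n \to \infty$ a.s., which follows from the existence result in Proposition \ref{eq:ExUniqSDE}.
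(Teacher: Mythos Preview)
Your overall strategy (integrated SDE, It\^o isometries, Gr\"onwall) is sound and matches the paper's, but there is a genuine gap in the dimension tracking that breaks the conclusion.

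You derive $\|\sigma^d(x)\|_F^2 \le d^2 L(1+\|x\|^2)$ from the componentwise growth condition and then claim an inequality of the form
\[
f(t) \le 4\|x\|^2 + K_1(L,T)\,d^2 + K_2(L,T)\int_0^t f(s)\,ds
\]
with $K_2$ depending only on $L$ and $T$. This is not what your bounds give. The factor $d^2 L$ multiplies $(1+\|x\|^2)$, not just the constant~$1$, so after the It\^o isometry the Brownian term contributes $d^2 L \int_0^t f(s)\,ds$ to the right-hand side. The integral coefficient $K_2$ you actually obtain is of order $d^2$, and Gr\"onwall then produces a factor $e^{K_2 T}$ that grows \emph{exponentially} in $d$. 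That is precisely the CoD the lemma is meant to rule out.

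The fix is to use the Lipschitz condition~(i) rather than the growth condition~(ii) for the $\|x\|^2$-part. Assumption~\ref{ass:LipschitzGrowth}(i) gives $\|\sigma^d(x)-\sigma^d(0)\|_F^2 \le L\|x\|^2$ with a \emph{$d$-independent} constant, so
\[
\|\sigma^d(x)\|_F^2 \le 2L\|x\|^2 + 2\|\sigma^d(0)\|_F^2 \le 2L\|x\|^2 + 2d^2 L,
\]
and similarly for $\beta^d$ and $\gamma^d$. Now the coefficient of $\int_0^t f(s)\,ds$ is a multiple of $L$ alone, while all powers of $d$ sit in the additive constant. This is exactly how the paper proceeds (it works at the level of $f(t)^{1/2}$ via Minkowski, but the essential point is the same). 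Your localization argument for the a~priori $L^2$-finiteness is fine; the paper instead quotes a moment bound from \cite{Applebaum2009} for this step, which is a minor difference.
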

\begin{proof} Let $d \in \N$ and $x \in \R^d$.
	We start by establishing 
\begin{equation} \label{eq:XTintegralFinite}
\E\left[\int_0^T \|X^{x,d}_t\|^2 dt \right] < \infty.
\end{equation}
This essentially follows from \cite[Corollary~6.2.4]{Applebaum2009}. 
More specifically, 
by \cite[Corollary~6.2.4]{Applebaum2009} it follows that $X^{x,d}_t$ 
is indeed in $L^2(\Omega,\Fc,\P)$ for all $t \in [0,T]$ and 
\begin{equation}\label{eq:ApplebaumMoment}
\E[\|X^{x,d}_t\|^2]\leq 2 \max(1,C(t)^2)(1+\|x\|^2)
\end{equation}
where  
$C(t)$ is given from the proof of \cite[Theorem~6.2.4]{Applebaum2009} 
as $C(t)=\sum_{n=1}^\infty \frac{C_2(t)^{n/2}K_3^{n/2}}{(n!)^{1/2}} $ 
with $C_2(t)=t \max(3t,12)$, $K_3 =L(1+\|x\|^2)$. 
\rev{Now note that $C_2(t)\leq C_2(T)$ for all $t \in [0,T]$ and consequently  $C(t) \leq C(T)$. But $C(T)$ is finite by the ratio test and therefore \eqref{eq:ApplebaumMoment} implies \[
\E\left[\int_0^T \|X^{x,d}_t\|^2 dt \right] \leq 	
	T \sup_{t \in [0,T]} \E[\|X^{x,d}_t\|^2]\leq 2 T \max(1,C(T)^2)(1+\|x\|^2) < \infty.
\]
}
	
Having established \eqref{eq:XTintegralFinite}, 
we can employ Assumption~\ref{ass:LipschitzGrowth} and \eqref{eq:XTintegralFinite} to verify for all $i,j \in \{1,\ldots,d\}$
\begin{equation} \label{eq:auxEq1} \begin{aligned}
\E\left[\int_0^T |\sigma^d_{i,j}(X_{t}^{x,d})|^2 dt \right]  & \leq 2\E\left[\int_0^T |\sigma^d_{i,j}(X_{t}^{x,d})-\sigma^d_{i,j}(0)|^2 dt \right] + 2 T |\sigma^d_{i,j}(0)|^2  
\\ & \leq 2 L \E\left[\int_0^T \|X_{t}^{x,d}\|^2 dt\right]+ 2 T |\sigma^d_{i,j}(0)|^2  < \infty
\end{aligned}
\end{equation}
and similarly
\begin{equation} \label{eq:auxEq2} \begin{aligned}
\E\left[\int_0^T \int_{\R^d} |\gamma^d_i(X_{t_-}^{x,d},z)|^2 \nu^d(dz) dt  \right]  & \leq 2\E\left[\int_0^T L \|X_{t}^{x,d}\|^2  dt \right] + 2 T \int_{\R^d} |\gamma^d_i(0,z)|^2 \nu^d(dz)  < \infty.
\end{aligned}
\end{equation}

Set $\rev{\bar{G}}(t)=\E[\|X^{x,d}_t\|^2]$ for $t \in [0,T]$. 
Using Minkowski's inequality, \eqref{eq:auxEq1}, \eqref{eq:auxEq2}, 
It\^o's isometry and the Minkowski integral inequality we obtain
\begin{equation}\label{eq:auxEq6} \begin{aligned}
\rev{\bar{G}}(t)^{1/2} & \leq \|x\| + \E\left[\left\|\int_0^t \beta^d(X_{s}^{x,d}) ds\right\| ^2\right]^{1/2} + \E\left[\left\|\int_0^t \sigma^d(X_{s}^{x,d}) d B^d_s \right\| ^2\right]^{1/2} \\ & \quad + \E\left[\left\|\int_0^t \int_{\R^d} \gamma^d(X_{t_-}^{x,d},z) \tilde{N}^d(ds,dz) \right\| ^2\right]^{1/2}
\\ & \leq \|x\| + \int_0^t \E[ \|\beta^d(X_{s}^{x,d})\|^2]^{1/2} ds + \left[\int_0^t \E[\|\sigma^d(X_{s}^{x,d})\|_F^2]ds\right]^{1/2} \\ & \quad + \left(\int_0^t \int_{\R^d} \E[\|\gamma^d(X_{t_-}^{x,d},z)\|^2] \nu^d(dz) ds \right)^{1/2}.
\end{aligned}
\end{equation} 	
We now consider these terms separately. For the first integral, 
\begin{equation}\label{eq:auxEq3} \begin{aligned}
\int_0^t \E[ \|\beta^d(X_{s}^{x,d})\|^2]^{1/2} ds & \leq  \int_0^t \E[ \|\beta^d(X_{s}^{x,d})-\beta^d(0)\|^2]^{1/2} ds +t\|\beta^d(0)\|
\\ & \leq L T^{1/2} \left(\int_0^t \rev{\bar{G}}(s) ds\right)^{1/2} +  t d^{1/2} L^{1/2}.
\end{aligned} \end{equation}
For the second one, we similarly estimate
\begin{equation}\label{eq:auxEq4} \begin{aligned}
\left[\int_0^t \E[\|\sigma^d(X_{s}^{x,d})\|_F^2]ds\right]^{1/2} & \leq   \left[2 \int_0^t \E[\|\sigma^d(X_{s}^{x,d})-\sigma^d(0)\|_F^2]ds + 2 t \|\sigma^d(0)\|_F^2  \right]^{1/2}
\\ & \leq \left[2 L \int_0^t \rev{\bar{G}}(s) ds + 2 t d^2 L  \right]^{1/2}.
\end{aligned} \end{equation}
For the last one, we obtain analogously
\begin{equation}\label{eq:auxEq5} \begin{aligned}
\left(\int_0^t \int_{\R^d} \E[\|\gamma^d(X_{t_-}^{x,d},z)\|^2] \nu^d(dz) dt \right)^{1/2} & \leq  \left(2 \int_0^t \E \left[ \int_{\R^d} \|\gamma^d(X_{t_-}^{x,d},z)-\gamma^d(0,z)\|^2 \nu^d(dz) \right] ds + 2 t d L \right)^{1/2} 
\\ & \leq \left[2 L \int_0^t \rev{\bar{G}}(s) ds + 2 t d^2 L  \right]^{1/2}.
\end{aligned} \end{equation}
Inserting \eqref{eq:auxEq3}, \eqref{eq:auxEq4} and \eqref{eq:auxEq5} in \eqref{eq:auxEq6} and using that for all $a,b,c\geq0$ it holds $(a+b+c)^2\leq 3(a^2+b^2+c^2)$ we obtain
\begin{equation}\label{eq:auxEq7} \begin{aligned}
\rev{\bar{G}}(t) & \leq 3(\|x\|^2+12Td^2L) + 30 L \int_0^t \rev{\bar{G}}(s) ds.
\end{aligned}
\end{equation}
Gr\rev{\"o}nwall's inequality and \eqref{eq:XTintegralFinite} hence prove that for all $t \in [0,T]$ we have $\rev{\bar{G}}(t) \leq a \exp(b t)$ with $a=3(\|x\|^2+12Td^2L)$ and $b=30 L$. Setting $c_1 = 3 \exp(bT)$ and $c_2 =36 T L \exp(bT)$, this proves the assertion. 
\end{proof}
\begin{remark}
Note that the estimate \eqref{eq:ApplebaumMoment} can not be directly used to deduce \eqref{eq:momentBound}, 
because $K_3$ depends on the Euclidean norm of the initial value $x$ 
and 
the constant $C(t)$ in \eqref{eq:momentBound} 
is lower bounded by
$C(t)\geq \sum_{n=1}^\infty \frac{C_2(t)^{n/2}K_3^{n/2}}{n!} 
 = 
 \exp(C_2(t)^{1/2}K_3^{1/2}) -1$. 
Recall that 
$C_2(t)=t \max(3t,12)$, $K_3 =L(1+\|x\|^2)$ 
and so \rev{from the estimate \eqref{eq:ApplebaumMoment} we could only obtain a bound that is exponential in $\|x\|$. This, in turn, 
would lead to far stronger conditions on the probability measure $\mu^d$ (used below to measure the approximation error) or to constants that grow exponentially in $d$.}
\end{remark}
In the next step 
we carry out a first approximation step based on the Euler-Maruyama scheme. 
To do so, let $h=\frac{T}{N}$, $N \in \N$, denote a step size 
and, 
for $t \in [0,T]$, 
let $\lfloor t \rfloor_h = \max \{ s \in h \N \,:\, s \leq t \}$ 
denote the largest discretization time below or equal to $t$.
The Euler discretization of $X^{x,d}$ 
is then defined by $\hat{X}_{0}^{x,d,h} = x$ and for $n=1,\ldots,N$,
\begin{equation} \label{eq:Euler}
\hat{X}_{n}^{x,d,h} 
= 
\hat{X}_{n-1}^{x,d,h} + \beta^d(\hat{X}_{n-1}^{x,d,h}) h 
+ 
\sigma^d(\hat{X}_{n-1}^{x,d,h}) (B^d_{nh}-B^d_{(n-1)h}) 
+ 
\int_{(n-1) h}^{nh}\int_{\R^d} \gamma^d(\hat{X}_{n-1}^{x,d,h},z) \tilde{N}^d(dt,dz).
\end{equation}
To prove that $X^{x,d}_{n h} \approx \hat{X}_{n}^{x,d,h}$ in a suitable sense
we define the interpolation (or continuous-time Euler) approximation 
as the solution to stochastic differential equation
$\bar{X}_0^{x,d,h} = x$, and
\begin{equation} \label{eq:SDEEuler} \begin{aligned}
 	d \bar{X}_t^{x,d,h} 
        = 
        \beta^d(\bar{X}_{\lfloor t- \rfloor_h}^{x,d,h}) d t 
        + 
         \sigma^d(\bar{X}_{\lfloor t- \rfloor_h}^{x,d,h}) d B^d_t 
        + \int_{\R^d} \gamma^d(\bar{X}_{\lfloor t- \rfloor_h}^{x,d,h},z) \tilde{N}^d(dt,dz), \quad t \in (0,T]
\end{aligned} \end{equation}
with $\bar{X}_{\lfloor t- \rfloor_h}^{x,d,h}:=\lim_{s \to t, s<t}\bar{X}_{\lfloor s \rfloor_h}^{x,d,h} $. 
Then $\bar{X}^{x,d,h}$ is an adapted c\`adl\`ag process and 
by definition $\bar{X}_{nh}^{x,d,h} = \hat{X}_{n}^{x,d,h}$ for all $n=0,\ldots,N$ 
and so $\bar{X}^{x,d,h}$ can be viewed as pathwise temporal 
interpolation of $\hat{X}_{n}^{x,d,h}$.

The next lemma proves that 
under Assumption~\ref{ass:LipschitzGrowth} 
the Euler scheme approximates $X^{x,d}$ without the CoD. 
We remark that the supremum that appears in \eqref{eq:EulerRateCont} is indeed measurable
(we assumed that our probability space is complete and both processes are adapted and c\`adl\`ag).
\rev{The constants $c_3,c_4 >0$ in Lemma~\ref{lem:Euler} only depend on $L$ and $T$.}
\begin{lemma} \label{lem:Euler} 
Suppose that Assumption~\ref{ass:LipschitzGrowth} holds. 
Then there exist constants $c_3,c_4 >0$ such that for all $d \in \N, x \in \R^d, h > 0$ 
the Euler discretization with step size $h$ satisfies
\begin{align} \label{eq:EulerRateCont}
\E\left[\sup_{t \in [0,T]}\|X_t^{x,d} - \bar{X}_t^{x,d,h} \|^2\right] & \leq h(c_3 d^4 + c_4 d^2\|x\|^2).
\end{align}
\end{lemma}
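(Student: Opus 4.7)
The plan is a dimension-explicit variant of the classical $L^2$-strong convergence proof for the Euler--Maruyama scheme applied to jump-diffusions. First I would establish a preliminary moment bound on the Euler approximation itself, namely
\[
\sup_{t \in [0,T]} \E\bigl[\|\bar{X}_t^{x,d,h}\|^2\bigr] \leq \tilde{c}_1 \|x\|^2 + \tilde{c}_2 d^2,
\]
uniformly in $h$, by repeating the Minkowski/Gr\"onwall argument from the proof of Lemma~\ref{lem:momentBound} with the coefficients frozen at $\bar{X}_{\lfloor s-\rfloor_h}^{x,d,h}$; the pointwise nature of Assumption~\ref{ass:LipschitzGrowth} makes this essentially verbatim, and the uniformity in $h$ is preserved because Gr\"onwall's constants depend only on $L$ and $T$.

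Next, I would write the error as the sum of three stochastic integrals,
\begin{align*}
X_t^{x,d} - \bar{X}_t^{x,d,h}
&= \int_0^t \bigl[\beta^d(X_s^{x,d}) - \beta^d(\bar{X}_{\lfloor s-\rfloor_h}^{x,d,h})\bigr] ds + \int_0^t \bigl[\sigma^d(X_s^{x,d}) - \sigma^d(\bar{X}_{\lfloor s-\rfloor_h}^{x,d,h})\bigr] d B^d_s \\
&\quad + \int_0^t \int_{\R^d} \bigl[\gamma^d(X_{s_-}^{x,d},z) - \gamma^d(\bar{X}_{\lfloor s-\rfloor_h}^{x,d,h},z)\bigr] \tilde{N}^d(ds,dz),
\end{align*}
and, for $T' \in [0,T]$, bound $\E[\sup_{t \in [0,T']}\|X_t^{x,d} - \bar{X}_t^{x,d,h}\|^2]$ by $3$ times the analogous quantity for each integral. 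Cauchy--Schwarz handles the drift; Doob's $L^2$-maximal inequality combined with It\^o's isometry handles the Brownian integral; the $L^2$-maximal inequality together with the Kunita-type isometry (e.g., \cite[Theorem~4.2.3]{Applebaum2009}) handles the compensated jump integral. Assumption~\ref{ass:LipschitzGrowth}(i) then consolidates the three integrands into a single Lipschitz bound, yielding
\[
\E\Bigl[\sup_{t \in [0,T']} \|X_t^{x,d} - \bar{X}_t^{x,d,h}\|^2\Bigr] \leq C_0 \int_0^{T'} \E\bigl[\|X_s^{x,d} - \bar{X}_{\lfloor s-\rfloor_h}^{x,d,h}\|^2\bigr] ds,
\]
where $C_0$ depends only on $L$ and $T$.

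The dimension-dependent content enters through the split $\|X_s - \bar{X}_{\lfloor s-\rfloor_h}\|^2 \leq 2\|X_s - \bar{X}_s\|^2 + 2\|\bar{X}_s - \bar{X}_{\lfloor s-\rfloor_h}\|^2$. Applying It\^o's and Kunita's isometries to the one-step Euler increment together with the linear-growth bounds of Assumption~\ref{ass:LipschitzGrowth}(ii) (which give $\|\beta^d(y)\|^2, \int\|\gamma^d(y,z)\|^2 \nu^d(dz) \leq d L (1+\|y\|^2)$ and $\|\sigma^d(y)\|_F^2 \leq d^2 L (1+\|y\|^2)$) and invoking the moment bound of Step~1 yields
\[
\E\bigl[\|\bar{X}_s^{x,d,h} - \bar{X}_{\lfloor s-\rfloor_h}^{x,d,h}\|^2\bigr] \leq C_1 h \bigl(d^4 + d^2 \|x\|^2\bigr).
\]
Substituting into the preceding display and applying Gr\"onwall to $\psi(T') := \E[\sup_{t \in [0,T']}\|X_t^{x,d} - \bar{X}_t^{x,d,h}\|^2]$ (whose a priori finiteness follows from the moment bounds of Lemma~\ref{lem:momentBound} and Step~1) produces the claimed estimate with constants $c_3, c_4$ depending only on $L$ and $T$. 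The main obstacle is precisely this dimension accounting: the $d^2$ coming from the Frobenius norm of $\sigma^d$ couples with the $d^2$ in the moment bound to give the $d^4$ coefficient, and every inequality along the way must preserve polynomial (rather than exponential) dependence on $d$; a minor technical point is the measurability of the supremum, which is granted by the c\`adl\`ag-adapted structure of both processes under the usual conditions.
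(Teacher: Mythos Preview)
Your proposal is correct and follows the same overall architecture as the paper's proof: decompose the error into drift, diffusion, and jump integrals, apply Doob's inequality and the It\^o/Kunita isometries, use the Lipschitz condition to get an integral inequality, split the integrand into a ``one-step increment'' term and a term that feeds back into Gr\"onwall, and track the $d^2 \times d^2 = d^4$ dimension accounting. The one substantive difference is in the splitting point: you insert $\bar{X}_s^{x,d,h}$ and bound $\E[\|\bar{X}_s^{x,d,h} - \bar{X}_{\lfloor s\rfloor_h}^{x,d,h}\|^2]$ using a separately established moment bound for the Euler scheme, whereas the paper inserts $X_{\lfloor r\rfloor_h}^{x,d}$ and bounds $\E[\|X_r^{x,d} - X_{\lfloor r\rfloor_h}^{x,d}\|^2]$ directly via the already-proved Lemma~\ref{lem:momentBound}. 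Both are textbook variants; the paper's choice is slightly more economical because it avoids your preliminary Step~1 entirely, while your choice has the mild advantage that the one-step increment of the continuous-time Euler process has an explicit closed form (coefficients frozen at the grid point), making the growth-condition estimate marginally cleaner.
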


\begin{proof} 	
Let $d \in \N$, $x \in \R^d$, $h>0$.
Define $\rev{\bar{G}}(t) = \E[\sup_{s \in [0,t]}\|X_s^{x,d} - \bar{X}_s^{x,d,h} \|^2]$ for $t \in [0,T]$. \rev{The Lipschitz properties assumed in Assumption~\ref{ass:LipschitzGrowth}, the triangle inequality, \eqref{eq:XTintegralFinite} and the fact that $\int_0^T \E[\|\bar{X}_{\lfloor r \rfloor_h}^{x,d,h}\|^2] dr < \infty$ 
(which can be deduced by an inductive argument) show that the processes $\int_0^\cdot \sigma^d(X_{r_-}^{x,d})-\sigma^d(\bar{X}_{\lfloor r- \rfloor_h}^{x,d,h}) d B^d_r$ and $\int_0^\cdot \int_{\R^d} \gamma^d(X_{r_-}^{x,d},z)-\gamma^d(\bar{X}_{\lfloor r- \rfloor_h}^{x,d,h},z) \tilde{N}^d(dr,dz) $ are martingales. Next, note that for any $d$-dimensional martingale $M$ its norm $\|M\|$ is a submartingale and hence, by Doob's martingale inequality, $\E[\left(\sup_{0\leq s \leq t}\|M_s\|\right)^2]^{1/2} \leq 2 \E[\|M_t\|^2]^{1/2}$. }
Inserting  \eqref{eq:SDEEuler} and \eqref{eq:SDEnew} we \rev{thus} obtain by the triangle inequality and 
Doob's martingale inequality 
\begin{equation}\label{eq:auxEq8} \begin{aligned}
\rev{\bar{G}}(t) & \leq 3 \E\left[\sup_{s \in [0,t]}\left\|\int_0^s \beta^d(X_{r_-}^{x,d}) - \beta^d(\bar{X}_{\lfloor r- \rfloor_h}^{x,d,h}) d r\right\|^2 \right]  + 3\E\left[\sup_{s \in [0,t]}\left\|\int_0^s \sigma^d(X_{r_-}^{x,d})-\sigma^d(\bar{X}_{\lfloor r- \rfloor_h}^{x,d,h}) d B^d_r\right\|^2\right]  \\ & \quad +  3\E\left[\sup_{s \in [0,t]}\left\|\int_0^s \int_{\R^d} \gamma^d(X_{r_-}^{x,d},z)-\gamma^d(\bar{X}_{\lfloor r- \rfloor_h}^{x,d,h},z) \tilde{N}^d(dr,dz) \right\|^2\right]
\\ & \leq 3 t\int_0^t \E\left[\left\|\beta^d(X_{r_-}^{x,d}) - \beta^d(\bar{X}_{\lfloor r- \rfloor_h}^{x,d,h})\right\|^2\right] d r   + 12\E\left[\left\|\int_0^t \sigma^d(X_{r_-}^{x,d})-\sigma^d(\bar{X}_{\lfloor r- \rfloor_h}^{x,d,h}) d B^d_r\right\|^2\right]  \\ & \quad +  12\E\left[\left\|\int_0^t \int_{\R^d} \gamma^d(X_{r_-}^{x,d},z)-\gamma^d(\bar{X}_{\lfloor r- \rfloor_h}^{x,d,h},z) \tilde{N}^d(dr,dz) \right\|^2\right]
\\ & = 3 t\int_0^t \E\left[\left\|\beta^d(X_{r_-}^{x,d}) - \beta^d(\bar{X}_{\lfloor r- \rfloor_h}^{x,d,h})\right\|^2\right] d r   + 12\int_0^t \E\left[\left\|\sigma^d(X_{r_-}^{x,d})-\sigma^d(\bar{X}_{\lfloor r- \rfloor_h}^{x,d,h})\right\|_F^2 \right] dr \\ & \quad +  12\int_0^t \int_{\R^d}\E\left[\left\| \gamma^d(X_{r_-}^{x,d},z)-\gamma^d(\bar{X}_{\lfloor r- \rfloor_h}^{x,d,h},z) \right\|^2\right] \nu^d(dz) dr 
\\ & \leq 3\max(3t,12) L \int_0^t \E\left[ \left\| X_{r}^{x,d}-\bar{X}_{\lfloor r \rfloor_h}^{x,d,h} \right\|^2 \right] d r.
\end{aligned}
\end{equation}
This \rev{implies, in particular, that $\sup_{t \in [0,T]} \rev{\bar{G}}(t)  \leq 3\max(3T,12) L \int_0^T \E\left[ \left\| X_{r}^{x,d}-\bar{X}_{\lfloor r \rfloor_h}^{x,d,h} \right\|^2 \right] d r$ and hence} the triangle inequality, the square integrability 
established in \eqref{eq:XTintegralFinite} and the fact that 
$\int_0^{\rev{T}} \E[\|\bar{X}_{\lfloor r \rfloor_h}^{x,d,h}\|^2] dr < \infty$ 
(\rev{see above})  
allow us to conclude that $\rev{\bar{G}} \in L^1([0,T])$. 
In addition, for all $r \in [0,T]$
\begin{equation}\label{eq:auxEq10}
\E\left[ \left\| X_{r}^{x,d}-\bar{X}_{\lfloor r \rfloor_h}^{x,d,h} \right\|^2 \right] 
\leq 
2 \E\left[ \left\| X_{r}^{x,d}-X_{\lfloor r \rfloor_h}^{x,d} \right\|^2 \right]
+ 
2 \E\left[ \left\| X_{\lfloor r \rfloor_h}^{x,d}-\bar{X}_{\lfloor r \rfloor_h}^{x,d,h} \right\|^2 \right].
\end{equation}
To estimate the first term in the bound \eqref{eq:auxEq10},
we apply It\^o's isometry to obtain for any $r \in [0,T]$ 
\begin{equation}\begin{aligned}\label{eq:auxEq11}
 \E\left[ \left\| X_{r}^{x,d}-X_{\lfloor r \rfloor_h}^{x,d} \right\|^2 \right] 
& 
\leq 
3  \E\left[ \left\| \int_{\lfloor r \rfloor_h}^r \beta^d(X_{ t- }^{x,d}) dt \right\|^2 \right] 
+ 3\E\left[\left\|\int_{\lfloor r \rfloor_h}^r \sigma^d(X_{ t- }^{x,d}) d B^d_t\right\|^2 \right] 
\\ & \quad 
+ 3\E\left[\left\|\int_{\lfloor r \rfloor_h}^r \int_{\R^d} \gamma^d(X_{ t- }^{x,d},z) \tilde{N}^d(dt,dz) \right\|^2 \right]
\\ 
& \leq  3 (r-\lfloor r \rfloor_h)\E\left[ \int_{\lfloor r \rfloor_h}^r 
       \left\|\beta^d(X_{ t- }^{x,d})\right\|^2 dt  \right] 
      + 3\int_{\lfloor r \rfloor_h}^r \E[\|\sigma^d(X_{ t- }^{x,d})\|_F^2] dt  
\\ 
& \quad 
      + 3\int_{\lfloor r \rfloor_h}^r \int_{\R^d} \E[\|\gamma^d(X_{ t- }^{x,d},z)\|^2] \nu^d(dz) d t
\\ 
& \leq  
3L [(r-\lfloor r \rfloor_h)d+d^2+d] \int_{\lfloor r \rfloor_h}^r 1+\E[\|X_{ t}^{x,d}\|^2] dt .
\end{aligned}
\end{equation}
Denote by $c_1$,$c_2$ the constants $c_1,c_2 >0$ (independent of $d$ and $x$) 
from Lemma~\ref{lem:momentBound} which satisfy for all $t \in [0,T]$ the bound \eqref{eq:momentBound}.
Inserting \eqref{eq:momentBound} into \eqref{eq:auxEq11}  
we obtain that for any $r \in [0,T]$ holds
\begin{equation}\begin{aligned}\label{eq:auxEq12}
\E\left[ \left\| X_{r}^{x,d}-X_{\lfloor r \rfloor_h}^{x,d} \right\|^2 \right]  
& \leq  9L d^2 (r-\lfloor r \rfloor_h) (1+c_1 \|x\|^2 + c_2 d^2).
\end{aligned}
\end{equation}
Inserting \eqref{eq:auxEq10} and \eqref{eq:auxEq12} into estimate \eqref{eq:auxEq8} 
gives for all $t \in [0,T]$
\[ \begin{aligned}
\rev{\bar{G}}(t) & \leq 6\max(3t,12)L \left( \int_0^t \rev{\bar{G}}(r) dr + \int_0^t \E\left[ \left\| X_{r}^{x,d}-X_{\lfloor r \rfloor_h}^{x,d} \right\|^2 \right] dr \right)
\\ & \leq 6\max(3T,12)L \left( \int_0^t \rev{\bar{G}}(r) dr + 9T L d^2 h (1+c_1 \|x\|^2 + c_2 d^2) \right).
\end{aligned}
\]
Gr\rev{\"o}nwall's inequality thus proves that for all $t \in [0,T]$
\[
\rev{\bar{G}}(t) \leq 6\max(3T,12) 9T L^2 d^2 h (1+c_1 \|x\|^2 + c_2 d^2) \exp(6\max(3T,12)Lt).
\]
Setting $a = 6\max(3T,12) 9T L^2 \exp(6\max(3T,12)LT)$ this proves \eqref{eq:EulerRateCont} with $c_3 = a (1+c_2)$, $c_4= a c_1$.
\end{proof}
\rev{The same techniques can be used to deduce the following moment bound. The constants $\bar{c}_3,\bar{c}_4 >0$ in Corollary~\ref{cor:supL2} only depend on $L$ and $T$.}
\begin{corollary}\label{cor:supL2}
Suppose Assumption~\ref{ass:LipschitzGrowth} holds. 
Then there exist constants $\bar{c}_3,\bar{c}_4 > 0$ 
such that for all $d \in \N, x \in \R^d$ 
\begin{align} \label{eq:supL2}
\E\left[\sup_{t \in [0,T]}\|X_t^{x,d}\|^2\right] & \leq \bar{c}_3 d^2 + \bar{c}_4 \|x\|^2.
\end{align}
\end{corollary}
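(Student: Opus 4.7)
The plan is to mimic the argument in the proof of Lemma~\ref{lem:Euler}, but applied directly to the process $X^{x,d}$ rather than to the increment $X^{x,d}-\bar X^{x,d,h}$, and to keep track of the $L^2$-norm rather than its square (as was done in the proof of Lemma~\ref{lem:momentBound}) so that after using linear growth only a factor $d$ or $d^2$ inside a square root appears. This is what will eventually yield a bound linear in $d^2$ after Grönwall, rather than exponential in $d$.

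\textbf{Step 1: Finiteness.} Set $H(t) := \E\bigl[\sup_{s\in[0,t]}\|X_s^{x,d}\|^2\bigr]$. First I would show $H(T)<\infty$. Writing $X^{x,d}$ in integrated form, the drift piece is controlled by Cauchy–Schwarz, while the Brownian and compensated-Poisson integrals are martingales to which Doob's $L^2$ inequality applies. Combined with the linear growth bound from Assumption~\ref{ass:LipschitzGrowth}(ii) and the moment bound \eqref{eq:momentBound} from Lemma~\ref{lem:momentBound}, this yields $H(T)<\infty$; in particular $H\in L^1([0,T])$.

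\textbf{Step 2: Gronwall-type inequality.} Starting from the integrated form of \eqref{eq:SDEnew} and taking the $L^2(\Omega)$-norm of $\sup_{s\in[0,t]}\|X_s^{x,d}\|$, the triangle inequality gives
\[
H(t)^{1/2} \leq \|x\| + I_1(t) + I_2(t) + I_3(t),
\]
where $I_1,I_2,I_3$ are the $L^2$-norms of the suprema of the drift, Brownian and compensated-jump integrals, respectively. For $I_1$ I would use $\sup_{s\leq t}\|\int_0^s\beta^d(X_{r-}^{x,d})\,dr\|\leq \int_0^t\|\beta^d(X_r^{x,d})\|\,dr$ together with the Minkowski integral inequality, and for $I_2,I_3$ Doob's $L^2$ martingale inequality followed by It\^o's isometry (resp.\ its compensated-Poisson analogue). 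The linear growth bounds from Assumption~\ref{ass:LipschitzGrowth}(ii) give
\[
\E[\|\beta^d(X_r^{x,d})\|^2] \leq 2dL(1+H(r)), \quad \E[\|\sigma^d(X_r^{x,d})\|_F^2] \leq 2d^2 L(1+H(r)),
\]
\[
\int_{\R^d}\E[\|\gamma^d(X_r^{x,d},z)\|^2]\,\nu^d(dz) \leq 2dL(1+H(r)).
\]
Substituting and using $(a_1+\ldots+a_n)^2\leq n(a_1^2+\ldots+a_n^2)$ to square the resulting inequality produces
\[
H(t) \leq K_T \bigl(\|x\|^2 + d^2 + \textstyle\int_0^t H(r)\,dr\bigr),
\]
for a constant $K_T$ that depends only on $L$ and $T$ (the $d$-dependence collapses to $d^2$ since $\sqrt{d^2}=d$ and $\sqrt d\leq d$ are absorbed on squaring).

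\textbf{Step 3: Conclusion via Gronwall.} Since $H\in L^1([0,T])$ by Step 1, Gronwall's inequality applied to the bound from Step 2 gives $H(T)\leq K_T e^{K_T T}(\|x\|^2+d^2)$, and setting $\bar c_3:=K_T e^{K_T T}$ and $\bar c_4:=K_T e^{K_T T}$ yields \eqref{eq:supL2}.

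The main technical point is the careful bookkeeping in Step 2: one must take the square root \emph{before} invoking the growth bounds so that the dimensional factor emerges as $d$ (from $\sqrt{d^2}$) in the pre-squared inequality, and only becomes $d^2$ upon squaring. A naive direct estimate of $H(t)$ would place the linear-growth factor $d^2$ in front of the integral $\int_0^t H(r)\,dr$, and Gronwall would then produce a constant exponential in $d^2$, which is the bound we want to avoid.
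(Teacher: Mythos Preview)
Your Step~2 contains a genuine gap that undermines the conclusion. When you invoke the componentwise growth bound from Assumption~\ref{ass:LipschitzGrowth}(ii) you obtain (correctly)
\[
\E[\|\sigma^d(X_r^{x,d})\|_F^2] \leq d^2 L\bigl(1+H(r)\bigr),
\]
and the factor $d^2$ multiplies the \emph{entire} expression $1+H(r)$, not just the constant $1$. Your square-root manoeuvre does not separate them: writing the Brownian contribution as $2\bigl(\int_0^t d^2L(1+H(r))\,dr\bigr)^{1/2}$ and squaring still yields a term $c\,d^2\int_0^t H(r)\,dr$. Hence after squaring you arrive at
\[
H(t) \leq K_T\|x\|^2 + K_T d^2 + K_T d^2\!\int_0^t H(r)\,dr,
\]
with the Gr\"onwall coefficient depending on $d^2$, which is precisely the ``naive direct estimate'' you warned against; applying Gr\"onwall then gives a bound exponential in $d^2$.

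The device the paper uses, and which you are missing, is to invoke the $d$-\emph{independent} Lipschitz condition of Assumption~\ref{ass:LipschitzGrowth}(i) rather than the growth condition: one writes $\|\sigma^d(X_r)\|_F^2 \leq 2\|\sigma^d(X_r)-\sigma^d(0)\|_F^2 + 2\|\sigma^d(0)\|_F^2 \leq 2L\|X_r\|^2 + 2\|\sigma^d(0)\|_F^2$, and only the constant $\|\sigma^d(0)\|_F^2$ (bounded by $d^2L$ via Assumption~\ref{ass:LipschitzGrowth}(ii) at $x=0$) carries a $d$-factor. In the paper this is done for $\bar G(t):=\E[\sup_{s\leq t}\|X_s^{x,d}-x\|^2]$; the resulting Gr\"onwall inequality has coefficient depending only on $L,T$, and the $d^2$ appears solely in the inhomogeneity, yielding \eqref{eq:supL2} after the triangle inequality.
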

\begin{proof}
Let $d \in \N$, $x \in \R^d$. 
Consider $\rev{\bar{G}}(t)= \E\left[\sup_{t \in [0,T]}\|X_t^{x,d} - x\|^2\right]$. 
Using precisely the same arguments employed to obtain \eqref{eq:auxEq8} 
and then Assumption~\ref{ass:LipschitzGrowth} as in \eqref{eq:auxEq3}-\eqref{eq:auxEq5},
we get
\[ \begin{aligned}
\rev{\bar{G}}(t) & \leq 3\max(3t,12) \int_0^t \E\left[\left\|\beta^d(X_{r}^{x,d})\right\|^2 + \left\|\sigma^d(X_{r}^{x,d})\right\|_F^2  + \int_{\R^d}\left\| \gamma^d(X_{r}^{x,d},z)\right\|^2\nu^d(dz) \right]  dr 
\\ & \leq 3 \max(3t,12)
\int_0^t 2 L \E\left[\left\|X_{r}^{x,d} \right\|^2\right] + 2\left\|\beta^d(0)\right\|^2 + 2 \left\|\sigma^d(0)\right\|_F^2  + 2\int_{\R^d}\left\| \gamma^d(0,z)\right\|^2\nu^d(dz) dr
\\ & \leq 6 L \max(3T,12) \left( 2 \int_0^t \rev{\bar{G}}(r) dr + 2T\|x\|^2 + Td^2 \right).
\end{aligned}\]
 Gr\rev{\"o}nwall's inequality thus proves that for all $t \in [0,T]$ we have $\rev{\bar{G}}(t) \leq b e^{a t}$ with $a = 12 L \max(3T,12)$, $b = 6 L \max(3T,12)T (2\|x\|^2 + d^2) $. 
This \rev{and the triangle inequality} prove  \eqref{eq:supL2} with $\bar{c}_3 = \rev{12} L \max(3T,12) T e^{a T}$, $\bar{c}_4= \rev{2+4} \bar{c}_3$. 
\end{proof}
\subsection{Small-jump truncation}
\label{sec:SmJmpTrunc}
In a next step we carry out an approximation procedure 
that allows us to remove the small jumps of the process $\bar{X}^{x,d,h}$. 
In case of a L\'evy-driven SDE (that is, when Assumption~\ref{ass:jumps}(i) is satisfied) 
this procedure is not required in the proof of Theorem~\ref{thm:main} 
and so in the current subsection we work exclusively under Assumption~\ref{ass:jumps}(ii).

For $\delta > 0$ we introduce the set of jumps of size at least $\delta$, i.e.
$A_\delta = \{z \in \R^d \colon \|z\|>\delta \}$.
We consider the truncated
continuous-time Euler approximation $Y_t^{x,d,h,\delta}$, 
which is the unique c\`adl\`ag process satisfying $Y_0^{x,d,h,\delta} = x$, 
\begin{equation} \label{eq:SDEEulerTruncated} \begin{aligned}
d Y_t^{x,d,h,\delta} = \beta^d(Y_{\lfloor t- \rfloor_h}^{x,d,h,\delta}) d t + \sigma^d(Y_{\lfloor t- \rfloor_h}^{x,d,h,\delta}) d B^d_t + \int_{A_\delta} \gamma^d(Y_{\lfloor t- \rfloor_h}^{x,d,h,\delta},z) \tilde{N}^d(dt,dz), \quad t \in (0,T].
\end{aligned} \end{equation}

\begin{remark}
Recall that \eqref{eq:SDEEuler} means that for $n=0,\ldots,N-1$,
the interpolation satisfies for  $t \in [t_n,t_{n+1}]$
\begin{equation} 
\label{eq:SDEEulerIntegrated} 
\begin{aligned}
\bar{X}_t^{x,d,h} 
= \hat{X}_n^{x,d,h} + \beta^d(\hat{X}_n^{x,d,h})(t-t_n) 
   + \sigma^d(\hat{X}_n^{x,d,h}) (B^d_{t}-B^d_{t_n}) 
   + \int_{t_n}^t \int_{\R^d} \gamma^d(\hat{X}_n^{x,d,h},z) \tilde{N}^d(dt,dz),
\end{aligned} 
\end{equation}
where $t_n = h n$, $n=0,\ldots,N$.
Similarly, \eqref{eq:SDEEulerTruncated} means that 
\begin{equation} \label{eq:SDEEulerIntegratedTruncated} \begin{aligned}
Y_t^{x,d,h,\delta} = Y_{t_n}^{x,d,h,\delta} + \beta^d(Y_{t_n}^{x,d,h,\delta})(t-t_n)&  + \sigma^d(Y_{t_n}^{x,d,h,\delta}) (B^d_{t}-B^d_{t_n}) \\ & + \int_{t_n}^t \int_{A_\delta} \gamma^d(Y_{t_n}^{x,d,h,\delta},z) \tilde{N}^d(dt,dz), \quad t \in [t_n,t_{n+1}].
\end{aligned} \end{equation}
\end{remark}

\rev{The following lemma bounds the error that arises from truncating the small jumps. The constants $c_5,c_6 >0$ in Lemma~\ref{lem:jumpTruncation} only depend on $L$, $\tilde{L}$ and $T$.}
\begin{lemma}\label{lem:jumpTruncation}  
Let Assumptions~\ref{ass:LipschitzGrowth} and \ref{ass:jumps}(ii) hold. 

Then 
there exist constants $c_5,c_6 >0$ 
such that for all $d \in \N, x \in \R^d$, $h \in (0,1)$, $\delta > 0$ 
\begin{align} \label{eq:SmallJumpTruncation}
\E\left[\sup_{t \in [0,T]}\|{Y}_t^{x,d,h,\delta} - \bar{X}_t^{x,d,h} \|^2\right] 
& \leq c_5 h(d^4 + d^2\|x\|^2) + c_6 \delta^{\bar{p}} d^{\bar{q}} (\|x\|^2+d^2).
\end{align}
\end{lemma}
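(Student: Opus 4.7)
The plan is to proceed as in the proof of Lemma~\ref{lem:Euler}, i.e.\ to set up a Gronwall inequality for
\[
\bar{G}(t) := \E\!\left[\sup_{s\in[0,t]}\|Y_s^{x,d,h,\delta}-\bar{X}_s^{x,d,h}\|^2\right],
\]
but now with an additional forcing term coming from the jumps of size $\leq \delta$ that are present in $\bar{X}^{x,d,h}$ but absent in $Y^{x,d,h,\delta}$. Subtracting \eqref{eq:SDEEulerTruncated} from \eqref{eq:SDEEuler} and splitting the jump integral according to the complementary sets $A_\delta$ and $A_\delta^c=\{\|z\|\leq\delta\}$, the difference $Y_t^{x,d,h,\delta}-\bar{X}_t^{x,d,h}$ decomposes into four integrals: a drift, a Brownian, a compensated-Poisson integral on $A_\delta$ (each involving the increment of the coefficients evaluated at $Y$ versus $\bar{X}$), and an extra compensated-Poisson integral on $A_\delta^c$ involving only $\gamma^d(\bar{X}_{\lfloor r-\rfloor_h}^{x,d,h},z)$.

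Applying Doob's $L^2$-inequality to the Brownian and compensated-Poisson martingales (as in \eqref{eq:auxEq8}), It\^o's isometry, the Cauchy--Schwarz inequality for the drift, and the global Lipschitz bound of Assumption~\ref{ass:LipschitzGrowth}(i) to the first three integrals, I obtain
\[
\bar{G}(t) \leq C_1\int_0^t \bar{G}(r)\,dr \;+\; 12\int_0^t\int_{\|z\|\leq\delta}\E\!\left[\|\gamma^d(\bar{X}_{\lfloor r-\rfloor_h}^{x,d,h},z)\|^2\right]\nu^d(dz)\,dr,
\]
with $C_1$ depending only on $L$ and $T$. For the residual small-jump term I invoke the small-jump bound \eqref{eq:smallJumpDecay} from Assumption~\ref{ass:jumps}(ii), yielding a pointwise estimate
\[
\int_{\|z\|\leq\delta}\|\gamma^d(y,z)\|^2\,\nu^d(dz)\leq \tilde L\,\delta^{\bar p}d^{\bar q}(1+\|y\|^2).
\]
Substituting $y=\bar{X}_{\lfloor r-\rfloor_h}^{x,d,h}$ reduces the residual term to $\tilde L\,\delta^{\bar p}d^{\bar q}\int_0^T(1+\E[\|\bar{X}_{\lfloor r\rfloor_h}^{x,d,h}\|^2])\,dr$.

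To close the argument I need a uniform-in-$h$ moment bound of the form $\sup_{t\in[0,T]}\E[\|\bar{X}_t^{x,d,h}\|^2]\leq C(\|x\|^2+d^2)$ with a constant depending only on $L$ and $T$. This can be obtained either by repeating verbatim the Gronwall argument of Lemma~\ref{lem:momentBound} applied to $\bar{X}^{x,d,h}$ (the Lipschitz/growth estimates go through since $\bar{X}_{\lfloor r-\rfloor_h}^{x,d,h}$ is $\Fc_{\lfloor r\rfloor_h}$-measurable so the stochastic integrals remain martingales), or by combining Corollary~\ref{cor:supL2} with Lemma~\ref{lem:Euler} and the triangle inequality and absorbing the $h$-dependent piece into the first summand of \eqref{eq:SmallJumpTruncation} using $h\in(0,1)$. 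Either way, inserting this bound into the residual term gives a contribution of size $C_2\,\delta^{\bar p}d^{\bar q}(\|x\|^2+d^2)$.

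The collected inequality reads
\[
\bar{G}(t)\leq C_1\int_0^t\bar{G}(r)\,dr \;+\; C_2\,\delta^{\bar p}d^{\bar q}(\|x\|^2+d^2),
\]
and since $\bar{G}\in L^1([0,T])$ (both processes are square-integrable uniformly in $t$), Gronwall's inequality yields $\bar{G}(T)\leq C_2\,\delta^{\bar p}d^{\bar q}(\|x\|^2+d^2)e^{C_1T}$. Finally, since the claimed bound also contains a term $c_5 h(d^4+d^2\|x\|^2)$, I add and subtract the true process $X^{x,d}$ (triangle inequality $\|Y-\bar X\|^2\leq 2\|Y-X\|^2+2\|X-\bar X\|^2$ applied under the supremum and expectation) and use Lemma~\ref{lem:Euler} to absorb the Euler contribution; alternatively I carry out the Gronwall argument directly on $\bar{G}$, noting that the discretisation error automatically appears because $Y^{x,d,h,\delta}$ and $\bar X^{x,d,h}$ already use piecewise-constant coefficients -- so the $h$-term in fact only enters when I subsequently compare with the continuous process, not in \eqref{eq:SmallJumpTruncation} itself. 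I anticipate the main obstacle to be bookkeeping the uniform (in $h$) moment bound for $\bar X^{x,d,h}$ with the required polynomial dependence on $d$ and $\|x\|$, since this is what generates the prefactor $(\|x\|^2+d^2)$ in the truncation term.
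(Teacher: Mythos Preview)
Your Gronwall setup is correct and matches the paper's structure. The one place where you diverge is in handling the residual small-jump term $\int_0^t\int_{\|z\|\le\delta}\E[\|\gamma^d(\bar X_{\lfloor r\rfloor_h}^{x,d,h},z)\|^2]\,\nu^d(dz)\,dr$: you apply the small-jump decay \eqref{eq:smallJumpDecay} directly with $y=\bar X_{\lfloor r\rfloor_h}^{x,d,h}$ and then invoke a moment bound for $\bar X^{x,d,h}$. The paper instead inserts the true process via $\|\gamma^d(\bar X,z)\|^2\le 2\|\gamma^d(\bar X,z)-\gamma^d(X,z)\|^2+2\|\gamma^d(X,z)\|^2$, bounds the first piece by Lipschitz plus Lemma~\ref{lem:Euler} (this is where the $c_5 h(d^4+d^2\|x\|^2)$ term arises), and applies \eqref{eq:smallJumpDecay} only to the second piece together with the moment bound of Lemma~\ref{lem:momentBound} for $X^{x,d}$. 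So the paper's $h$-term is not an artefact of a later comparison with $X^{x,d}$ as you speculate at the end, but comes precisely from this detour inside the residual estimate.

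Your route is legitimate and in fact sharper: if you establish $\sup_{t\le T}\E[\|\bar X_t^{x,d,h}\|^2]\le C(\|x\|^2+d^2)$ uniformly in $h$ (your option~(a), which goes through verbatim as in Lemma~\ref{lem:momentBound}), you obtain \eqref{eq:SmallJumpTruncation} with $c_5=0$. The paper acknowledges exactly this in the remark following the lemma. Your option~(b) also works and reproduces the stated bound with the $h$-term. Either way the claimed inequality follows; just drop the final hedging paragraph, since the $h$-term is either absent (option~(a)) or already produced inside your residual estimate (option~(b)), not added \emph{a posteriori}.
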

\begin{proof} Setting $\rev{\bar{G}}(t) = \E[\sup_{s \in [0,t]}\|{Y}_s^{x,d,h,\delta} - \bar{X}_s^{x,d,h} \|^2]$
and employing precisely the same arguments as in \eqref{eq:auxEq8} we obtain 
\begin{equation}\label{eq:auxEq13} \begin{aligned}
\rev{\bar{G}}(t)  & \leq 3 t\int_0^t \E\left[\left\|\beta^d(Y_{\lfloor r- \rfloor_h}^{x,d,h,\delta}) - \beta^d(\bar{X}_{\lfloor r- \rfloor_h}^{x,d,h})\right\|^2\right] d r   + 12\int_0^t \E\left[\left\|\sigma^d(Y_{\lfloor r- \rfloor_h}^{x,d,h,\delta})-\sigma^d(\bar{X}_{\lfloor r- \rfloor_h}^{x,d,h})\right\|_F^2 \right] dr \\ & \quad +  12\int_0^t \int_{\R^d}\E\left[\left\| \gamma^d(Y_{\lfloor r- \rfloor_h}^{x,d,h,\delta},z) \mathbbm{1}_{A_\delta}(z)-\gamma^d(\bar{X}_{\lfloor r- \rfloor_h}^{x,d,h},z) \right\|^2\right] \nu^d(dz) dr 
\\ & \leq 3\max(3t,24) L \int_0^t \E\left[ \left\| Y_{\lfloor r \rfloor_h}^{x,d,h,\delta}-\bar{X}_{\lfloor r \rfloor_h}^{x,d,h} \right\|^2 \right] d r + 24\int_0^t \int_{\R^d\setminus A_\delta}\E\left[\left\| \gamma^d(\bar{X}_{\lfloor r \rfloor_h}^{x,d,h} ,z) \right\|^2\right] \nu^d(dz) dr 
\\ & \leq 3\max(3t,24) L \int_0^t \rev{\bar{G}}(r) d r + 24\int_0^t \E\left[\int_{\R^d\setminus A_\delta}\left\| \gamma^d(\bar{X}_{\lfloor r \rfloor_h}^{x,d,h} ,z) \right\|^2 \nu^d(dz) \right] dr .
\end{aligned}
\end{equation}	
To estimate the last term, we first use the Lipschitz-condition and Assumption~\ref{ass:jumps}~(ii) and then use Lemma~\ref{lem:momentBound} and Lemma~\ref{lem:Euler} to obtain 
\begin{equation}\label{eq:auxEq14} \begin{aligned}
\int_0^T & \E\left[\int_{\R^d\setminus A_\delta}\left\| \gamma^d(\bar{X}_{\lfloor r \rfloor_h}^{x,d,h} ,z) \right\|^2 \nu^d(dz) \right] dr 
\\ &  \leq 2 \int_0^T \E\left[\int_{\R^d} \left\| \gamma^d(\bar{X}_{\lfloor r \rfloor_h}^{x,d,h} ,z)-\gamma^d(X_{\lfloor r \rfloor_h}^{x,d} ,z) \right\|^2 \nu^d(dz) + \int_{\|z\|\leq \delta} \left\| \gamma^d(X_{\lfloor r \rfloor_h}^{x,d} ,z) \right\|^2 \nu^d(dz) \right] dr
\\ &  \leq 2 \int_0^T L \E\left[ \left\| \bar{X}_{\lfloor r \rfloor_h}^{x,d,h} - X_{\lfloor r \rfloor_h}^{x,d} \right\|^2\right] +  \delta^{\bar{p}} d^{\bar{q}} \tilde{L} (1+\E[\|X_{\lfloor r \rfloor_h}^{x,d}\|^2]) dr
\\ &  \leq 2 T L h(c_3 d^4 + c_4 d^2\|x\|^2) + 2 T \delta^{\bar{p}} d^{\bar{q}} \tilde{L} (1+c_1 \|x\|^2 + c_2 d^2),
\end{aligned}
\end{equation}
where $c_1,c_2,c_3,c_4$ denote the constants from Lemma~\ref{lem:momentBound} and Lemma~\ref{lem:Euler}, which do not depend on $d \in \N, x \in \R^d,  h \in (0,1)$.

Gr\rev{\"o}nwall's inequality therefore shows that 
\[
\rev{\bar{G}}(t) \leq b e^{a t}
\]
with $a = 3\max(3T,24) L$, $b = 48 T L h(c_3 d^4 + c_4 d^2\|x\|^2) + 48 T \delta^{\bar{p}} d^{\bar{q}} \tilde{L} (1+c_1 \|x\|^2 + c_2 d^2)$ 
and so \eqref{eq:SmallJumpTruncation} 
follows with $c_5 = 48 T L \exp(3\max(3T,24) LT) \max(c_3,c_4)$, $c_6 = 48 T \tilde{L} \exp(3\max(3T,24) LT) \max(c_1,2c_2)$.
\end{proof}

\begin{remark}
With some further work the bound in Lemma~\ref{lem:jumpTruncation} 
could be improved to $c_5=0$ in \eqref{eq:SmallJumpTruncation}. 
This would require us to prove an analogue of Lemma~\ref{lem:momentBound} for the process ${Y}^{x,d,h,\delta}$. 
This improvement is straigthforward, but not essential for the ensuing developments.
\end{remark}

\subsection{Approximation of coefficients}
\label{sec:ApprCoeff}
In the next approximation step we approximate the coefficients by deep neural networks. 
To this end, for $\varepsilon \in \rev{(0,1]}$, 
we consider the continuous-time process $Z^{x,d,h,\delta,\varepsilon}$. 
Under the integrability condition on $\gamma_{\varepsilon,d}
$ in Assumption \ref{ass:NNApproxCoeff},
this is the unique c\`adl\`ag process satisfying $Z_0^{x,d,h,\delta,\varepsilon} = x$, 
\begin{equation} \label{eq:SDEEulerFinal} 
\begin{aligned}
d Z_t^{x,d,h,\delta,\varepsilon} 
= 
\beta_{\varepsilon,d}(Z_{\lfloor t- \rfloor_h}^{x,d,h,\delta,\varepsilon}) d t 
+ 
\sigma_{\varepsilon,d}(Z_{\lfloor t- \rfloor_h}^{x,d,h,\delta,\varepsilon}) d B^d_t 
+ 
\int_{A_\delta} \gamma_{\varepsilon,d}(Z_{\lfloor t- \rfloor_h}^{x,d,h,\delta,\varepsilon},z) \tilde{N}^d(dt,dz), 
\quad t \in (0,T],
\end{aligned} 
\end{equation}
where we now also allow $\delta = 0$ with the convention that $A_0= \R^d \setminus \{0\}$. 
See also \eqref{eq:auxEq15} below for a more explicit representation of $Z=Z^{x,d,h,\delta,\varepsilon}$.

We first need a moment estimate similar to Lemma~\ref{lem:momentBound}. \rev{The constants $c_7,c_8 >0$ in Lemma~\ref{lem:momentBoundEulerFinal} only depend on $C$ and $T$.}
\begin{lemma}\label{lem:momentBoundEulerFinal}
		Suppose Assumption~\ref{ass:NNApproxCoeff} holds. Then there exist constants $c_7,c_8 >0$ such that for all $d \in \N, x \in \R^d, t \in [0,T]$, $h \in (0,1)$, $\delta \geq 0$, $\varepsilon \in \rev{(0,1]}$ \rev{it holds that}
		\begin{equation}\label{eq:momentBoundEulerFinal}
		\E[\|Z_t^{x,d,h,\delta,\varepsilon}\|^2] \leq c_7 \|x\|^2 + c_8 d^p \varepsilon^{-q}.
		\end{equation}
\end{lemma}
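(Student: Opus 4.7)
The plan is to closely follow the proof of Lemma~\ref{lem:momentBound}, but using the a-priori growth bound on the DNN-approximated coefficients supplied by Assumption~\ref{ass:NNApproxCoeff}(i) in place of Assumption~\ref{ass:LipschitzGrowth}(ii), and noting that the coefficients are now evaluated at the left-continuous, piecewise constant interpolation $s\mapsto Z_{\lfloor s-\rfloor_h}^{x,d,h,\delta,\varepsilon}$. Fix $d\in\N$, $x\in\R^d$, $h\in(0,1)$, $\delta\geq 0$, $\varepsilon\in(0,1]$, and write $Z=Z^{x,d,h,\delta,\varepsilon}$ and $\bar G(t)=\E[\|Z_t\|^2]$.

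First I would establish that $\bar G(t)<\infty$ for every $t\in[0,T]$ by induction over the grid points $t_n=nh$, $n=0,\dots,N$. On each interval $[t_n,t_{n+1}]$ the process $Z$ has the explicit representation
\begin{equation*}
Z_t=Z_{t_n}+\beta_{\varepsilon,d}(Z_{t_n})(t-t_n)+\sigma_{\varepsilon,d}(Z_{t_n})(B^d_t-B^d_{t_n})+\int_{t_n}^{t}\!\!\int_{A_\delta}\gamma_{\varepsilon,d}(Z_{t_n},z)\,\tilde N^d(ds,dz),
\end{equation*}
so if $Z_{t_n}\in L^2(\Omega)$, then by It\^o's isometry for the Brownian and compensated Poisson stochastic integrals together with the integrability bound in Assumption~\ref{ass:NNApproxCoeff}(i) applied to $Z_{t_n}$, we get $Z_t\in L^2(\Omega)$ for all $t\in[t_n,t_{n+1}]$. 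The base case $Z_0=x$ is trivial.

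Next, mirroring \eqref{eq:auxEq6} I would apply Minkowski's inequality in $L^2(\Omega)$, It\^o's isometry, and the Minkowski integral inequality to bound
\begin{equation*}
\bar G(t)^{1/2}\leq \|x\|+\int_0^t\!\E[\|\beta_{\varepsilon,d}(Z_{\lfloor s\rfloor_h})\|^2]^{1/2}ds+\!\Bigl[\int_0^t\!\E[\|\sigma_{\varepsilon,d}(Z_{\lfloor s\rfloor_h})\|_F^2]ds\Bigr]^{1/2}\!+\!\Bigl[\int_0^t\!\!\int_{A_\delta}\!\E[\|\gamma_{\varepsilon,d}(Z_{\lfloor s\rfloor_h},z)\|^2]\nu^d(dz)ds\Bigr]^{1/2},
\end{equation*}
where I have used that $\lfloor s-\rfloor_h=\lfloor s\rfloor_h$ for Lebesgue-a.e.\ $s$, and that $A_\delta\subseteq\R^d$. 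Squaring and applying $(a+b+c+d)^2\leq 4(a^2+b^2+c^2+d^2)$, then using the a-priori growth bound from Assumption~\ref{ass:NNApproxCoeff}(i), namely $\|\beta_{\varepsilon,d}(y)\|^2+\|\sigma_{\varepsilon,d}(y)\|_F^2+\int_{\R^d}\|\gamma_{\varepsilon,d}(y,z)\|^2\nu^d(dz)\leq C(d^p\varepsilon^{-q}+\|y\|^2)$, I obtain for all $t\in[0,T]$
\begin{equation*}
\bar G(t)\leq 4\|x\|^2+4C\max(T,1)\int_0^t\bigl(d^p\varepsilon^{-q}+\E[\|Z_{\lfloor s\rfloor_h}\|^2]\bigr)\,ds.
\end{equation*}

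Finally, since $\E[\|Z_{\lfloor s\rfloor_h}\|^2]\leq \sup_{r\in[0,s]}\bar G(r)$, the right-hand side is non-decreasing in $t$, so taking the supremum on the left over $[0,t]$ and applying Gr\"onwall's inequality to $t\mapsto \sup_{r\in[0,t]}\bar G(r)$ yields $\sup_{t\in[0,T]}\bar G(t)\leq (4\|x\|^2+4C\max(T,1)Td^p\varepsilon^{-q})\exp(4C\max(T,1)T)$, which gives \eqref{eq:momentBoundEulerFinal} with suitable $c_7,c_8>0$ depending only on $C$ and $T$. The only mild subtlety is the finiteness step: one must handle the piecewise-constant coefficient evaluation carefully before invoking Gr\"onwall, since otherwise the inequality has both sides potentially infinite; the inductive construction on the grid sidesteps this cleanly.
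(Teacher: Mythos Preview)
Your proposal is correct and follows essentially the same route as the paper's proof: an inductive step over the grid to secure $Z_{t_n}\in L^2$, then Minkowski/It\^o isometry combined with the growth bound from Assumption~\ref{ass:NNApproxCoeff}(i), followed by Gr\"onwall applied to the running supremum. The only cosmetic difference is that the paper defines $\bar G(t)=\sup_{s\le t}\E[\|Z_s\|^2]$ from the outset rather than passing to the supremum at the end, and your displayed constant $4C\max(T,1)$ should really be something like $4C(T+2)$ (the drift term picks up an extra factor $t$ after Cauchy--Schwarz), but this does not affect the argument.
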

\begin{proof}
Fix $d \in \N$, $x \in \R^d$, $h \in (0,1)$, $\delta \geq 0$, $\varepsilon \in \rev{(0,1]}$. 
To simplify notation write $Z=Z^{x,d,h,\delta,\varepsilon}$.
Let $\rev{\bar{G}}(t) = \sup_{s \leq t } \E[\|Z_s\|^2]$ for $t \in [0,T]$. 
Note that for $t \in [t_n,t_{n+1}]$ we have
\begin{equation} 
\label{eq:auxEq15} 
\begin{aligned}
Z_t = Z_{t_n} + \beta_{\varepsilon,d}(Z_{t_n})(t-t_n)
    &  + \sigma_{\varepsilon,d}(Z_{t_n}) (B^d_{t}-B^d_{t_n})  
       + \int_{t_n}^t \int_{A_\delta} \gamma_{\varepsilon,d}(Z_{t_n},z) \tilde{N}^d(dt,dz)
\end{aligned} 
\end{equation}
and the stochastic integral is well-defined, see, e.g., \cite[Section~4.3.2]{Applebaum2009}.
Thus, if $Z_{t_n} \in L^2(\Omega,\Fc,\P)$, then the triangle inequality, It\^o's isometry 
and the growth hypotheses on 
$\beta_{\varepsilon,d},\sigma_{\varepsilon,d},\gamma_{\varepsilon,d}$ 
in Assumption \ref{ass:NNApproxCoeff}
prove that
\begin{equation} \label{eq:auxEq16} \begin{aligned}
\E[\|Z_t\|^2]^{1/2} & \leq \E[\|Z_{t_n}\|^2]^{1/2} + \E[\|\beta_{\varepsilon,d}(Z_{t_n})\|^2]^{1/2}(t-t_n)  + \E[\|\sigma_{\varepsilon,d}(Z_{t_n}) (B^d_{t}-B^d_{t_n})\|^2]^{1/2}  \\ & \quad \quad + \E\left[\left\|\int_{t_n}^t \int_{A_\delta} \gamma_{\varepsilon,d}(Z_{t_n},z) \tilde{N}^d(dt,dz)\right\|^2\right]^{1/2}
\\ & \leq \E[\|Z_{t_n}\|^2]^{1/2} + C^{\rev{1/2}} [(d^p \varepsilon^{-q})^{\rev{1/2}} + \E[\|Z_{t_n}\|^2]^{1/2}](t-t_n)  + \E[\|\sigma_{\varepsilon,d}(Z_{t_n})\|_F^2(t-t_n)]^{1/2} 
\\ & \quad \quad 
 + \left(\int_{A_\delta} \E\left[\left\| \gamma_{\varepsilon,d}(Z_{t_n},z)\right\|^2\right] \nu^d(dz) (t-t_n) \right)^{1/2}
\\ & \leq (1+3C^{\rev{1/2}})\E[\|Z_{t_n}\|^2]^{1/2} + 3 (C d^p \varepsilon^{-q})^{\rev{1/2}}.
\end{aligned} \end{equation}
Using $Z_{t_0} =x$ we inductively obtain from \eqref{eq:auxEq16} that $Z_{t_n} \in L^2(\Omega,\Fc,\P)$ for $n=0,1,\ldots,N$ and furthermore $\rev{\bar{G}} \in L^1([0,T])$. 
 
Next, we insert the SDE representation \eqref{eq:SDEEulerFinal} 
and apply the same arguments used to obtain \eqref{eq:auxEq16} 
to estimate for any $t \in [0,T]$
\begin{equation} \label{eq:auxEq17} \begin{aligned}
\E[\|Z_t\|^2]^{1/2} & \leq \|x\| + \E\left[\left\|\int_0^t \beta_{\varepsilon,d}(Z_{\lfloor s- \rfloor_h}) ds\right\| ^2\right]^{1/2} + \E\left[\left\|\int_0^t \sigma_{\varepsilon,d}(Z_{\lfloor s- \rfloor_h}) d B^d_s \right\| ^2\right]^{1/2} \\ & \quad + \E\left[\left\|\int_0^t \int_{A_\delta} \gamma_{\varepsilon,d}(Z_{\lfloor s- \rfloor_h},z) \tilde{N}^d(ds,dz) \right\| ^2\right]^{1/2}
\\ & \leq \|x\| + \int_0^t \E[ \|\beta_{\varepsilon,d}(Z_{\lfloor s- \rfloor_h})\|^2]^{1/2} ds + \left[\int_0^t \E[\|\sigma_{\varepsilon,d}(Z_{\lfloor s- \rfloor_h})\|_F^2]ds\right]^{1/2} \\ & \quad + \left(\int_0^t \int_{A_\delta} \E[\|\gamma_{\varepsilon,d}(Z_{\lfloor s- \rfloor_h},z)\|^2] \nu^d(dz) ds \right)^{1/2}
\\ & \leq \|x\| + T (C d^p \varepsilon^{-q})^{\rev{1/2}} + C^{\rev{1/2}} \int_0^t \E[ \|Z_{\lfloor s- \rfloor_h}\|^2]^{1/2} ds + 2 C^{\rev{1/2}} \left[\int_0^t \E[d^p \varepsilon^{-q} + \|Z_{\lfloor s- \rfloor_h}\|^2 ]ds\right]^{1/2}
\\ & \leq \|x\| + T (C d^p \varepsilon^{-q})^{\rev{1/2}} + C^{\rev{1/2}} T^{1/2} \left(\int_0^t \rev{\bar{G}}(s) ds\right)^{1/2} + 2 C^{\rev{1/2}} \left[T d^p \varepsilon^{-q} + \int_0^t \rev{\bar{G}}(s) ds\right]^{1/2}.
\end{aligned} \end{equation}
This shows that
for any $t \in [0,T]$ it holds that
\begin{equation} \label{eq:auxEq18} \begin{aligned}
\rev{\bar{G}}(t)  \leq 6\|x\|^2 + [6 T^2+24T] Cd^p \varepsilon^{-q} + [3C T+24 C ] \int_0^t \rev{\bar{G}}(s) ds
\end{aligned} \end{equation}
(with $C$ as in Assumption~\ref{ass:NNApproxCoeff}) 
and hence, by Gr\rev{\"o}nwall's inequality, 
we conclude 
$\rev{\bar{G}}(T)\leq a\exp(bT)$ with $a= 6\|x\|^2 + [6 T^2+24T] Cd^p \varepsilon^{-q}$, $b=3C T+24 C$. 
This proves \eqref{eq:momentBoundEulerFinal} with $c_7 = 6 \exp(bT) $ and $c_8 = [6 T^2+24T] C \exp(bT) $. 
\end{proof}
\begin{remark}
\rev{In general, the constants $c_1$, $c_2$ in Lemma~\ref{lem:momentBound} depend exponentially on the constant $L$ appearing in the Lipschitz and growth conditions in Assumption~\ref{ass:LipschitzGrowth}. The proof of Lemma~\ref{lem:momentBoundEulerFinal} employs analogous techniques to the proof of Lemma~\ref{lem:momentBound}.}
\rev{By}
\rev{ using only the growth hypotheses on
$\beta_{\varepsilon,d},\sigma_{\varepsilon,d},\gamma_{\varepsilon,d}$ 
(see Assumption \ref{ass:NNApproxCoeff}) the exponential dependence on the Lipschitz constant (but not on $C$) can be avoided.}
\end{remark}
The following result provides an estimate for the error arising 
from the neural network approximation of the coefficients. 
The result holds both for $\delta = 0$ (no truncation of jumps) 
and 
$\delta>0$ (jumps smaller than $\delta$ are removed). 
\rev{The constant $c_9 >0$ in Proposition~\ref{prop:NNApproxEuler} only depends on $C$, $L$ and $T$.}
\begin{proposition}\label{prop:NNApproxEuler}
Suppose Assumptions~\ref{ass:LipschitzGrowth} and \ref{ass:NNApproxCoeff} hold. 

Then there exists $c_9 > 0$ such that for all 
$d \in \N$, $x \in \R^d$, $h \in (0,1)$, $\delta \geq 0$, $\varepsilon \in \rev{(0,1]}$ 
holds
\begin{equation}\label{eq:NNApproxEuler}
\E\left[\sup_{t \in [0,T]}\|{Y}_t^{x,d,h,\delta} - {Z}_t^{x,d,h,\delta,\varepsilon} \|^2\right] 
\leq 
c_9 \varepsilon^{3q+1} d^{2p} (1+  \|x\|^2).
\end{equation}
\end{proposition}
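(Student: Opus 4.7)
The plan is to mirror the strategy used for Lemma~\ref{lem:Euler} and Lemma~\ref{lem:jumpTruncation}: set $\bar G(t)=\E[\sup_{s\in[0,t]}\|Y^{x,d,h,\delta}_s-Z^{x,d,h,\delta,\varepsilon}_s\|^2]$, subtract the SDEs \eqref{eq:SDEEulerTruncated} and \eqref{eq:SDEEulerFinal}, apply the triangle inequality together with Doob's maximal inequality and It\^o's isometry, and obtain an integral inequality in $\bar G$ that is closed by Gr\"onwall's inequality. The conventions $A_0=\R^d\setminus\{0\}$ and $A_\delta=\{\|z\|>\delta\}$ let us treat the two regimes uniformly, since in \eqref{eq:SDEEulerTruncated} and \eqref{eq:SDEEulerFinal} the jumps are integrated against the same set $A_\delta$.

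The key algebraic manoeuvre is to split every coefficient difference into a Lipschitz part and a DNN-approximation part. For the drift,
\[
\beta^d(Y_{\lfloor r-\rfloor_h}^{x,d,h,\delta})-\beta_{\varepsilon,d}(Z_{\lfloor r-\rfloor_h}^{x,d,h,\delta,\varepsilon})
=\bigl[\beta^d(Y)-\beta^d(Z)\bigr]+\bigl[\beta^d(Z)-\beta_{\varepsilon,d}(Z)\bigr],
\]
and analogous decompositions are used for $\sigma$ and $\gamma$. The first bracket is handled by Assumption~\ref{ass:LipschitzGrowth} and produces a term bounded by $L\,\E\|Y_{\lfloor r\rfloor_h}-Z_{\lfloor r\rfloor_h}\|^2\le L\bar G(r)$. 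The second bracket is bounded in $L^2$ by Assumption~\ref{ass:NNApproxCoeff}(i), which for each component contributes at most $\varepsilon^{4q+1}Cd^p(1+\|Z_{\lfloor r\rfloor_h}\|^2)$. Taking expectations and invoking the moment estimate \eqref{eq:momentBoundEulerFinal} of Lemma~\ref{lem:momentBoundEulerFinal} gives
\[
\E\bigl[\varepsilon^{4q+1}Cd^p(1+\|Z_{\lfloor r\rfloor_h}\|^2)\bigr]
\le \varepsilon^{4q+1}Cd^p\bigl(1+c_7\|x\|^2+c_8 d^p\varepsilon^{-q}\bigr)
\le c\,\varepsilon^{3q+1}d^{2p}(1+\|x\|^2),
\]
which is exactly the target exponent combination, since $\varepsilon^{4q+1}\cdot\varepsilon^{-q}=\varepsilon^{3q+1}$ and $d^p\cdot d^p=d^{2p}$.

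For the jump term, the decomposition yields
\[
\int_{A_\delta}\|\gamma^d(Z,z)-\gamma_{\varepsilon,d}(Z,z)\|^2\nu^d(dz)\le\int_{\R^d}\|\gamma^d(Z,z)-\gamma_{\varepsilon,d}(Z,z)\|^2\nu^d(dz),
\]
so the truncation by $A_\delta$ can simply be dropped, and Assumption~\ref{ass:NNApproxCoeff}(i) applies verbatim. Putting everything together we obtain an inequality of the form
\[
\bar G(t)\le K\int_0^t\bar G(r)\,dr+K'\,\varepsilon^{3q+1}d^{2p}(1+\|x\|^2)
\]
for constants $K,K'>0$ depending only on $C$, $L$ and $T$, at which point Gr\"onwall's lemma finishes the proof with $c_9=K'e^{KT}$. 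Before invoking Gr\"onwall one must verify that $\bar G\in L^1([0,T])$, which follows from combining Corollary~\ref{cor:supL2} (for $Y$, itself an Euler scheme of $X$) with Lemma~\ref{lem:momentBoundEulerFinal} (for $Z$).

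The main technical nuisance, more than a deep obstacle, is the bookkeeping for the jump integrals: one must justify applying Doob's inequality to the $\tilde N^d$-martingale difference $\int_{A_\delta}(\gamma^d(Y_{\lfloor r-\rfloor_h},z)-\gamma_{\varepsilon,d}(Z_{\lfloor r-\rfloor_h},z))\tilde N^d(dr,dz)$, which in turn requires the $L^2$-integrability of the integrand; this follows from the growth bounds in Assumption~\ref{ass:NNApproxCoeff}(i) on $\gamma_{\varepsilon,d}$ and from Assumption~\ref{ass:LipschitzGrowth}(ii) on $\gamma^d$, together with the moment bounds on $Y$ and $Z$. Once this is in place, the estimate is a direct generalization of the Gr\"onwall arguments already executed in Lemmas~\ref{lem:Euler} and \ref{lem:jumpTruncation}.
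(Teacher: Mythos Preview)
Your proposal is correct and follows essentially the same approach as the paper: the same Gr\"onwall set-up for $\bar G(t)$, the same decomposition of each coefficient difference into a Lipschitz part and a DNN-approximation part, and the same use of Lemma~\ref{lem:momentBoundEulerFinal} to convert the $\varepsilon^{4q+1}d^p(1+\|Z\|^2)$ term into the target $\varepsilon^{3q+1}d^{2p}(1+\|x\|^2)$. The only cosmetic difference is that the paper applies Gr\"onwall first and then inserts the moment bound, whereas you insert the moment bound first to get a constant forcing term; both orderings give the same result.
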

\begin{proof} 
Fix $d \in \N$, $x \in \R^d$, $h \in (0,1)$, $\delta \geq 0$, $\varepsilon \in \rev{(0,1]}$. 
To simplify notation write $Z=Z^{x,d,h,\delta,\varepsilon}$, $Y={Y}^{x,d,h,\delta} $.
Let $\rev{\bar{G}}(t) = \E[\sup_{s \in [0,t]}\|{Y}_s^{x,d,h,\delta} - {Z}_s^{x,d,h,\delta,\varepsilon} \|^2]$ for $t \in [0,T]$. 
Then by the triangle inequality, Doob's martingale inequality 
and It\^o's isometry we obtain
\begin{equation}\label{eq:auxEq19} 
\begin{aligned}
\rev{\bar{G}}(t) & \leq 3 \E\left[\sup_{s \in [0,t]}\left\|\int_0^s \beta^d(Y_{\lfloor r- \rfloor_h}) - \beta_{\varepsilon,d}(Z_{\lfloor r- \rfloor_h}) d r\right\|^2 \right]  + 3\E\left[\sup_{s \in [0,t]}\left\|\int_0^s \sigma^d(Y_{\lfloor r- \rfloor_h})-\sigma_{\varepsilon,d}(Z_{\lfloor r- \rfloor_h}) d B^d_r\right\|^2\right]  \\ & \quad +  3\E\left[\sup_{s \in [0,t]}\left\|\int_0^s \int_{A_\delta} \gamma^d(Y_{\lfloor r- \rfloor_h},z)-\gamma_{\varepsilon,d}(Z_{\lfloor r- \rfloor_h},z) \tilde{N}^d(dr,dz) \right\|^2\right]
\\ & \leq 3 t\int_0^t \E\left[\left\|\beta^d(Y_{\lfloor r- \rfloor_h}) - \beta_{\varepsilon,d}(Z_{\lfloor r- \rfloor_h})\right\|^2\right] d r   + 12\E\left[\left\|\int_0^t \sigma^d(Y_{\lfloor r- \rfloor_h})-\sigma_{\varepsilon,d}(Z_{\lfloor r- \rfloor_h}) d B^d_r\right\|^2\right]  \\ & \quad +  12\E\left[\left\|\int_0^t \int_{A_\delta} \gamma^d(Y_{\lfloor r- \rfloor_h},z)-\gamma_{\varepsilon,d}(Z_{\lfloor r- \rfloor_h},z)\tilde{N}^d(dr,dz) \right\|^2\right]
\\ & = 3 t\int_0^t \E\left[\left\|\beta^d(Y_{\lfloor r- \rfloor_h}) - \beta_{\varepsilon,d}(Z_{\lfloor r- \rfloor_h})\right\|^2\right] d r   + 12\int_0^t \E\left[\left\|\sigma^d(Y_{\lfloor r- \rfloor_h})-\sigma_{\varepsilon,d}(Z_{\lfloor r- \rfloor_h})\right\|_F^2 \right] dr \\ & \quad +  12\int_0^t \E\left[\int_{\R^d}\left\| \gamma^d(Y_{\lfloor r- \rfloor_h},z)-\gamma_{\varepsilon,d}(Z_{\lfloor r- \rfloor_h},z) \right\|^2 \nu^d(dz)\right] dr .
\end{aligned}
\end{equation}
The triangle inequality, the Lipschitz-continuity of $\beta^d$ 
and Assumption~\ref{ass:NNApproxCoeff}(i) 
then yield for any $r \in [0,T]$ 
\begin{equation}
\label{eq:auxEq20} 
\begin{aligned}
\left\|\beta^d(Y_{\lfloor r- \rfloor_h}) - \beta_{\varepsilon,d}(Z_{\lfloor r- \rfloor_h})\right\|^2
& \leq 2 \left\|\beta^d(Y_{\lfloor r- \rfloor_h}) - \beta^d(Z_{\lfloor r- \rfloor_h})\right\|^2 + 2 \left\|\beta^d(Z_{\lfloor r- \rfloor_h}) - \beta_{\varepsilon,d}(Z_{\lfloor r- \rfloor_h})\right\|^2 
\\ & \leq 2 L \left\|Y_{\lfloor r- \rfloor_h} - Z_{\lfloor r- \rfloor_h}\right\|^2 + 2 \varepsilon^{4q+1} C d^p (1+\|Z_{\lfloor r- \rfloor_h}\|^2)
\end{aligned}
\end{equation}
and similarly 
\begin{equation}\label{eq:auxEq21} \begin{aligned}
\left\|\sigma^d(Y_{\lfloor r- \rfloor_h})-\sigma_{\varepsilon,d}(Z_{\lfloor r- \rfloor_h})\right\|_F^2 & + \int_{\R^d}\left\| \gamma^d(Y_{\lfloor r- \rfloor_h},z)-\gamma_{\varepsilon,d}(Z_{\lfloor r- \rfloor_h},z) \right\|^2 \nu^d(dz) \\ &  \quad \quad \leq 2 L \left\|Y_{\lfloor r- \rfloor_h} - Z_{\lfloor r- \rfloor_h}\right\|^2 + 2 \varepsilon^{4q+1} C d^p (1+\|Z_{\lfloor r- \rfloor_h}\|^2). 
\end{aligned}
\end{equation}
Inserting the two estimates \eqref{eq:auxEq20}, \eqref{eq:auxEq21} into \eqref{eq:auxEq19} yields 
\begin{equation}\label{eq:auxEq22} \begin{aligned}
\rev{\bar{G}}(t) & \leq 2(3 t+12)\int_0^t  L \rev{\bar{G}}(r) dr + 2(3 t+12) \int_0^t  \varepsilon^{4q+1} C d^p (1+\E\left[\|Z_{\lfloor r- \rfloor_h}\|^2\right]) d r .
\end{aligned}
\end{equation}
By using Gr\rev{\"o}nwall's inequality in the first step and \eqref{eq:momentBoundEulerFinal} in the second step we therefore conclude that
\[\begin{aligned}
\rev{\bar{G}}(t) & \leq 2(3 T+12) \int_0^T  \varepsilon^{4q+1} C d^p (1+\E\left[\|Z_{\lfloor r- \rfloor_h}\|^2\right]) d r \exp(2(3 T+12) L t)
\\ & \leq 2(3 T+12) T  \varepsilon^{4q+1} C d^p (1+c_7 \|x\|^2 + c_8 d^p \varepsilon^{-q}) \exp(2(3 T+12) L T) 
\end{aligned}\]
which proves \eqref{eq:NNApproxEuler} with $c_9 = \max(a c_7,a (1+c_8))$, $a=2(3 T+12)C  T \exp(2(3 T+12) L T) $.
\end{proof}
\subsection{Monte Carlo approximation of the compensator integral}
\label{sec:MCCompInt}
For $0<\delta<1$ write $Z:=Z^{x,d,h,\delta,\varepsilon}$. 
Then for $t \in [t_n,t_{n+1}]$ the process $Z$ in \eqref{eq:SDEEulerFinal} 
can be written as
\[\begin{aligned}
Z_{t} & = Z_{t_n} + \beta_{\varepsilon,d}(Z_{t_n})(t-t_n)  + \sigma_{\varepsilon,d}(Z_{t_n}) (B^d_{t}-B^d_{t_n})  + \int_{t_n}^t \int_{A_\delta} \gamma_{\varepsilon,d}(Z_{t_n},z) \tilde{N}^d(dt,dz)
\\ & = Z_{t_n} + \beta_{\varepsilon,d}(Z_{t_n})(t-t_n)  + \sigma_{\varepsilon,d}(Z_{t_n}) (B^d_{t}-B^d_{t_n})  + \sum_{t_n \leq s \leq t} \gamma_{\varepsilon,d}(Z_{t_n},\Delta P_s^d) \mathbbm{1}_{A_\delta}(\Delta P_s^d) \\ & \quad - (t-t_n) \int_{A_\delta} \gamma_{\varepsilon,d}(Z_{t_n},z) \nu^d(dz)
\end{aligned} \]
where $P_t^d = \int_{A_\delta} y N^d(t,d y)$, 
see for instance \cite[Section~4.3.2]{Applebaum2009}, and 
$\Delta P_t^d = P_t^d - P_{t-}^d$ is the jump size of $P^d$ at $t$.

The final approximation step that we carry out now 
allows us to approximate the last integral above by a finite sum over random samples.
In case of a L\'evy-driven SDE (that is, when Assumption~\ref{ass:jumps}(i) is satisfied) 
this procedure is not required in the proof of Theorem~\ref{thm:main}
and so, in the current subsection, 
we work exclusively under Assumption~\ref{ass:jumps}(ii).

To this end, notice that 
\begin{equation}
\label{eq:AdeltaMass}
\nu^d(A_\delta) 
= \int_{A_\delta} \frac{1 \wedge \|z\|^2}{1 \wedge \|z\|^2 } \nu^d(dz) 
\leq 
\delta^{-2} \int_{\R^d } (1 \wedge \|z\|^2) \nu^d(d z) 
\leq 
\delta^{-2} \tilde{L} d^{\bar{q}} 
\end{equation}
is finite.
This shows that
$\tilde{\nu}^d(B):=\frac{\nu^d(B \cap A_\delta )}{\nu^d(A_\delta)}$ for $B \in \mathcal{B}(\R^d)$ 
defines a probability measure on $(\R^d,\mathcal{B}(\R^d))$. 
Let $\mathcal{M}\in \N$ and let $V_{i,t_n}$, $i=1,\ldots,\mathcal{M}$, $n=1,\ldots,N$ 
be i.i.d samples with distribution $\tilde{\nu}^d$, independent of $B^d$ and $N^d$. 
We now define the continuous-time process 
$\hat{Z}:=\hat{Z}^{x,d,h,\delta,\varepsilon,\mathcal{M}}$, 
which is the unique c\`adl\`ag process satisfying $\hat{Z}_0^{x,d,h,\delta,\varepsilon,\mathcal{M}} = x$, 
\begin{equation} \label{eq:SDEEulerFinal2} 
\begin{aligned}
d \hat{Z}_t^{x,d,h,\delta,\varepsilon,\mathcal{M}} 
&= \beta_{\varepsilon,d}(\hat{Z}_{\lfloor t- \rfloor_h}^{x,d,h,\delta,\varepsilon,\mathcal{M}}) dt 
+ \sigma_{\varepsilon,d}(\hat{Z}_{\lfloor t- \rfloor_h}^{x,d,h,\delta,\varepsilon,\mathcal{M}}) d B^d_t 
+ \int_{A_\delta} \gamma_{\varepsilon,d}(\hat{Z}_{\lfloor t- \rfloor_h}^{x,d,h,\delta,\varepsilon,\mathcal{M}},z) N^d(dt,dz) 
\\ & \quad \quad - \frac{\nu^d(A_\delta)}{\M}
\sum_{i=1}^\M \gamma_{\varepsilon,d}(\hat{Z}_{\lfloor t- \rfloor_h}^{x,d,h,\delta,\varepsilon,\mathcal{M}},V_{i,\lfloor t- \rfloor_h}) dt, 
\quad \quad   \quad t \in (0,T].
\end{aligned} 
\end{equation}
We first need a dimension-explicit bound on the second moments of $\hat{Z}_t^{x,d,h,\delta,\varepsilon,\mathcal{M}}$.
\rev{The constants $\tilde{c}_7,\tilde{c}_8 >0$ in Lemma~\ref{lem:momentBoundEulerFinal2} only depend on $C$ and $T$.}
\begin{lemma}\label{lem:momentBoundEulerFinal2}
	Suppose Assumption~\ref{ass:NNApproxCoeff} and Assumption~\ref{ass:jumps}(ii) hold. Then there exist constants $\tilde{c}_7,\tilde{c}_8 >0$ such that for all $d \in \N, x \in \R^d, t \in [0,T]$, $h \in (0,1)$, $\delta \in (0,1)$, $\varepsilon \in \rev{(0,1]}$ and $\M \in \N$ with $\M\geq \delta^{-2} \tilde{L} d^{\bar{q}} $ it holds that
	\begin{equation}\label{eq:momentBoundEulerFinal2}
	\E[\|\hat{Z}_t^{x,d,h,\delta,\varepsilon,\mathcal{M}}\|^2] \leq \tilde{c}_7 \|x\|^2 + \tilde{c}_8 d^p \varepsilon^{-q}.
	\end{equation}
\end{lemma}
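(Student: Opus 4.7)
The strategy is to mimic the proof of Lemma~\ref{lem:momentBoundEulerFinal}, but carefully separating the Monte Carlo compensator term from the compensated Poisson integral so that the mean-zero structure can be exploited.

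\textbf{Rewriting the SDE.} First I would decompose the jump part of \eqref{eq:SDEEulerFinal2} by adding and subtracting the exact compensator:
\[
\int_{A_\delta} \gamma_{\varepsilon,d}(\hat Z_{\lfloor t-\rfloor_h},z) N^d(dt,dz) = \int_{A_\delta} \gamma_{\varepsilon,d}(\hat Z_{\lfloor t-\rfloor_h},z) \tilde N^d(dt,dz) + \Bigl(\int_{A_\delta}\gamma_{\varepsilon,d}(\hat Z_{\lfloor t-\rfloor_h},z)\nu^d(dz)\Bigr) dt.
\]
Letting $R_t := \int_{A_\delta}\gamma_{\varepsilon,d}(\hat Z_{\lfloor t-\rfloor_h},z)\nu^d(dz) - \tfrac{\nu^d(A_\delta)}{\M}\sum_{i=1}^\M \gamma_{\varepsilon,d}(\hat Z_{\lfloor t-\rfloor_h},V_{i,\lfloor t-\rfloor_h})$, this rewrites $\hat Z$ as an integral equation with the same first three terms as \eqref{eq:SDEEulerFinal} plus the single additional drift term $\int_0^t R_s\,ds$.

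\textbf{Monte Carlo variance bound.} The $V_{i,t_n}$ are i.i.d.\ with law $\tilde\nu^d = \nu^d(\cdot\cap A_\delta)/\nu^d(A_\delta)$ and independent of $B^d, N^d$ and $\Fc_{t_n}$. Conditionally on $\hat Z_{\lfloor s-\rfloor_h} = y$, the random variable $R_s$ has mean zero, and an elementary i.i.d.\ variance computation gives
\[
\E\bigl[\|R_s\|^2 \bigm| \hat Z_{\lfloor s-\rfloor_h}=y\bigr] = \tfrac{\nu^d(A_\delta)^2}{\M}\mathrm{Var}(\gamma_{\varepsilon,d}(y,V_1)) \le \tfrac{\nu^d(A_\delta)}{\M} \int_{A_\delta}\|\gamma_{\varepsilon,d}(y,z)\|^2\,\nu^d(dz).
\]
By \eqref{eq:AdeltaMass} and the hypothesis $\M\ge \delta^{-2}\tilde L d^{\bar q}$, we have $\nu^d(A_\delta)/\M \le 1$, so Assumption~\ref{ass:NNApproxCoeff}(i) yields
\[
\E[\|R_s\|^2] \le C(d^p\varepsilon^{-q} + \E[\|\hat Z_{\lfloor s-\rfloor_h}\|^2]).
\]

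\textbf{Moment bound via Gr\"onwall.} Setting $\bar{G}(t) := \sup_{s\le t}\E[\|\hat Z_s\|^2]$ and applying the triangle inequality, It\^o's isometry, and the growth bounds in Assumption~\ref{ass:NNApproxCoeff}(i) to each of the five terms in the integral representation of $\hat Z_t$ --- exactly as in the derivation of \eqref{eq:auxEq17} --- I would obtain
\[
\E[\|\hat Z_t\|^2]^{1/2} \le \|x\| + c\bigl(1 + T^{1/2}\bigr)(d^p\varepsilon^{-q})^{1/2} + c\Bigl(\int_0^t \bar G(s)\,ds\Bigr)^{1/2},
\]
for some $c$ depending only on $C$ and $T$, where the new remainder $\int_0^t R_s\,ds$ is controlled by $\int_0^t \E[\|R_s\|^2]^{1/2}\,ds$ using the bound from the previous step. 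An inductive argument on the grid points $t_n$ (identical to that in the proof of Lemma~\ref{lem:momentBoundEulerFinal}) first ensures $\hat Z_{t_n}\in L^2(\Omega,\Fc,\P)$ and thus $\bar G\in L^1([0,T])$. Squaring, using $(a+b+c)^2\le 3(a^2+b^2+c^2)$, and applying Gr\"onwall's inequality then delivers the claim with suitable $\tilde c_7,\tilde c_8$ depending only on $C$ and $T$.

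\textbf{Main obstacle.} The essential subtlety is that the jump measure $N^d$ in \eqref{eq:SDEEulerFinal2} is \emph{not} compensated, so direct application of It\^o's isometry would produce a term of order $\nu^d(A_\delta) \int_{A_\delta}\|\gamma_{\varepsilon,d}\|^2\,d\nu^d$ that is \emph{not} controlled uniformly in $d$. The decomposition into compensated martingale plus Monte Carlo residual is what rescues the estimate, and the hypothesis $\M\ge \nu^d(A_\delta)$ (guaranteed by \eqref{eq:AdeltaMass} and the standing lower bound on $\M$) is precisely what is needed so that the Monte Carlo variance does not inflate the bound.
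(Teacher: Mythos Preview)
Your proposal is correct and follows essentially the same route as the paper: the same add-and-subtract of the exact compensator to recover $\tilde N^d$ plus a Monte Carlo residual, the same conditional variance bound $\E[\|R_s\|^2]\le \tfrac{\nu^d(A_\delta)}{\M}\int_{A_\delta}\|\gamma_{\varepsilon,d}\|^2\,d\nu^d$ combined with $\nu^d(A_\delta)\le\M$, the same inductive $L^2$-integrability step on the grid, and the same Gr\"onwall argument on $\bar G$. Your identification of the ``main obstacle'' is exactly the point the paper is addressing.
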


\begin{proof}
The proof proceeds similarly as the proof of Lemma~\ref{lem:momentBoundEulerFinal}. 
Fix $d \in \N$, $x \in \R^d$, $h \in (0,1)$, 
$\delta \in (0,1)$, $\varepsilon \in \rev{(0,1]}$, $\M \in \N$ 
and write 
$\hat{Z}=\hat{Z}^{x,d,h,\delta,\varepsilon,\M}$ 
and 
$\rev{\bar{G}}(t) = \sup_{s \leq t } \E[\|\hat{Z}_s\|^2]$ for $t \in [0,T]$. 

Then from \eqref{eq:SDEEulerFinal2} we obtain for $t \in [t_n,t_{n+1}]$
\begin{equation} \label{eq:auxEq31} \begin{aligned}
\hat{Z}_t = \hat{Z}_{t_n} + \beta_{\varepsilon,d}(\hat{Z}_{t_n})(t-t_n)&  + \sigma_{\varepsilon,d}(\hat{Z}_{t_n}) (B^d_{t}-B^d_{t_n})  + \int_{t_n}^t \int_{A_\delta} \gamma_{\varepsilon,d}(\hat{Z}_{t_n},z) \tilde{N}^d(dt,dz) \\ & + (t-t_n) \int_{A_\delta} \gamma_{\varepsilon,d}(\hat{Z}_{t_n},z) \nu^d(dz) - \frac{(t-t_n)\nu^d(A_\delta)}{\M}\sum_{i=1}^\M \gamma_{\varepsilon,d}(\hat{Z}_{t_n},V_{i,t_n}).
\end{aligned} 
\end{equation}
Suppose for now $\hat{Z}_{t_n} \in L^2(\Omega,\Fc,\P)$, the last difference can be estimated in $L^2$ as follows: 
by definition of $V_{i,t_n}$ we obtain for any $x \in \R^d$ that 
$\nu^d(A_\delta)\E[\gamma_{\varepsilon,d}(x,V_{i,t_n})] = \int_{A_\delta} \gamma_{\varepsilon,d}(x,z) \nu^d(dz)$. 
Hence, by independence, elementary properties of variance and 
with the growth hypothesis on $\gamma_{\varepsilon,d}$ (Assumption~\ref{ass:NNApproxCoeff})
we obtain
\begin{equation} \label{eq:auxEq36} \begin{aligned}
\E& \left[\left\|\int_{A_\delta} \gamma_{\varepsilon,d}(\hat{Z}_{t_n},z) \nu^d(dz) - \frac{\nu^d(A_\delta)}{\M}\sum_{i=1}^\M \gamma_{\varepsilon,d}(\hat{Z}_{t_n},V_{i,t_n})\right\|^2\right]^{1/2}
 \\ & = \E\left[\left.\E\left[\left\|\int_{A_\delta} \gamma_{\varepsilon,d}(x,z) \nu^d(dz) - \frac{\nu^d(A_\delta)}{\M}\sum_{i=1}^\M \gamma_{\varepsilon,d}(x,V_{i,t_n})\right\|^2\right]\right|_{x = \hat{Z}_{t_n}}\right]^{1/2}
 \\ & = \nu^d(A_\delta) \M^{-1/2} \E\left[\sum_{j=1}^d \left.\E[|\E[\gamma_{\varepsilon,d,j}(x,V_{1,t_1})]-\gamma_{\varepsilon,d,j}(x,V_{1,t_1})|^2]\right|_{x = \hat{Z}_{t_n}}\right]^{1/2}.
\\ & \leq   \nu^d(A_\delta) \M^{-1/2} \E\left[\left.\sum_{j=1}^d\E[|\gamma_{\varepsilon,d,j}(x,V_{1,t_1})|^2]\right|_{x = \hat{Z}_{t_n}}\right]^{1/2} 
\\ & =  [\nu^d(A_\delta)]^{1/2} \M^{-1/2} \E\left[ \int_{A_\delta} \|\gamma_{\varepsilon,d}(\hat{Z}_{t_n},z)\|^2 \nu^d(d z) \right]^{1/2}
\\ & \leq C^{\rev{1/2}} [\nu^d(A_\delta)]^{1/2} \M^{-1/2} ((d^p \varepsilon^{-q})^{\rev{1/2}} +\E[\|\hat{Z}_{t_n}\|^2 ]^{1/2}).
\end{aligned} 
\end{equation}
Thus, if $\hat{Z}_{t_n} \in L^2(\Omega,\Fc,\P)$, 
then using first the triangle inequality and precisely the same arguments used to obtain \eqref{eq:auxEq16} 
and then inserting \eqref{eq:auxEq36} and employing that $\nu^d(A_\delta) \leq \M$ 
(due to \eqref{eq:AdeltaMass} and the assumption $\M \geq \delta^{-2} \tilde{L} d^{\bar{q}}$) 
we deduce
\begin{equation} \label{eq:auxEq33} \begin{aligned}
\E[\|\hat{Z}_t\|^2]^{1/2} & \leq  (1+3C^{\rev{1/2}})\E[\|\hat{Z}_{t_n}\|^2]^{1/2} + 3 (C d^p \varepsilon^{-q})^{\rev{1/2}} \\ & \quad + (t-t_n)\E\left[\left\|\int_{A_\delta} \gamma_{\varepsilon,d}(\hat{Z}_{t_n},z) \nu^d(dz) - \frac{\nu^d(A_\delta)}{\M}\sum_{i=1}^\M \gamma_{\varepsilon,d}(\hat{Z}_{t_n},V_{i,t_n})\right\|^2\right]^{1/2} 
\\ & \leq  (1+(3+T)C^{\rev{1/2}})\E[\|\hat{Z}_{t_n}\|^2]^{1/2} + (3+T) (C d^p \varepsilon^{-q})^{\rev{1/2}}.
\end{aligned} \end{equation}
Starting with $\hat{Z}_{t_0} =x$ we may now inductively obtain from \eqref{eq:auxEq33} that $\hat{Z}_{t_n} \in L^2(\Omega,\Fc,\P)$ for $n=0,1,\ldots,N$ and furthermore $\rev{\bar{G}} \in L^1([0,T])$. 
Next, we insert \eqref{eq:SDEEulerFinal2} and apply the same arguments used to obtain   \eqref{eq:auxEq17} in the first inequality and the Minkowski integral inequality combined with \eqref{eq:auxEq36} to estimate for any $t \in [0,T]$
\begin{equation} \label{eq:auxEq34} \begin{aligned}
\E[ \|\hat{Z}_t\|^2]^{1/2} & \leq \|x\| + T (C d^p \varepsilon^{-q})^{\rev{1/2}} + (C T)^{1/2} \left(\int_0^t \rev{\bar{G}}(s) ds\right)^{1/2} + 2 C^{\rev{1/2}} \left[T d^p \varepsilon^{-q} +  \int_0^t \rev{\bar{G}}(s) ds\right]^{1/2} 
\\ & \quad + \E \left[\left\|\int_0^t\int_{A_\delta} \gamma_{\varepsilon,d}(\hat{Z}_{\lfloor s- \rfloor_h},z) \nu^d(dz) - \frac{\nu^d(A_\delta)}{\M}\sum_{i=1}^\M \gamma_{\varepsilon,d}(\hat{Z}_{\lfloor s- \rfloor_h},V_{i,\lfloor s- \rfloor_h}) d s\right\|^2\right]^{1/2}
\\  & \leq \|x\| + T (C d^p \varepsilon^{-q})^{\rev{1/2}} + (C T)^{1/2} \left(\int_0^t \rev{\bar{G}}(s) ds\right)^{1/2} + 2 C^{\rev{1/2}} \left[T d^p \varepsilon^{-q} + \int_0^t \rev{\bar{G}}(s) ds\right]^{1/2} 
\\ & \quad + \int_0^t C^{\rev{1/2}} ((d^p \varepsilon^{-q})^{\rev{1/2}} +\E[\|\hat{Z}_{\lfloor s- \rfloor_h}\|^2 ]^{1/2})  d s.
\\  & \leq \|x\| + 2 T (C d^p \varepsilon^{-q})^{\rev{1/2}} + 2 (C T)^{1/2} \left(\int_0^t \rev{\bar{G}}(s) ds\right)^{1/2} + 2 C^{\rev{1/2}} \left[Td^p \varepsilon^{-q} +  \int_0^t \rev{\bar{G}}(s) ds\right]^{1/2}.
\end{aligned} \end{equation}
This bound is, up to factors of $2$, identical with \eqref{eq:auxEq17}. 
The proof can now be completed using Gr\rev{\"o}nwall's inequality as before, \rev{yielding 
$\rev{\bar{G}}(T)\leq a\exp(bT)$ with $a= 6\|x\|^2 + [24 T^2 + 48 T] Cd^p \varepsilon^{-q}$, $b=24 C T+48C$. 
This proves \eqref{eq:momentBoundEulerFinal2} with $\tilde{c}_7 = 6 \exp(bT) $ and $\tilde{c}_8 = [24 T^2 + 48 T] C \exp(bT) $. 
}
\end{proof}
The next result provides an estimate for the error arising 
from the Monte Carlo approximation for the compensator integral. 
\rev{The constant  $\tilde{c}_9 >0$ in Proposition~\ref{prop:NNApproxEuler2} only depends on $C$, $L$, $\tilde{L}$ and $T$.} 
\begin{proposition}\label{prop:NNApproxEuler2}
Suppose Assumptions~\ref{ass:LipschitzGrowth}, ~\ref{ass:jumps}(ii) and \ref{ass:NNApproxCoeff} hold. 
Then there exists \rev{a constant} $\tilde{c}_9 >0$ such that 
for all $d \in \N$, $x \in \R^d$, $h \in (0,1)$, $\delta \in (0,1)$, $\varepsilon \in \rev{(0,1]}$ 
and for $\M \in \N$ with $\M\geq \delta^{-2} \tilde{L} d^{\bar{q}}$
it holds that
	\begin{equation}\label{eq:NNApproxEuler2}
	\E\left[\sup_{t \in [0,T]}\|{Y}_t^{x,d,h,\delta} - \hat{Z}_t^{x,d,h,\delta,\varepsilon,\M} \|^2\right] \leq \tilde{c}_9 [\varepsilon^{3q+1}  d^{2p} +  \delta^{-2} d^{3p+\bar{q}} \varepsilon^{-3q} \M^{-1} ] \left(1  + \|x\|^2 \right).
	\end{equation}
\end{proposition}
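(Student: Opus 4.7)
I would imitate the proof of Proposition~\ref{prop:NNApproxEuler} essentially verbatim, treating the Monte Carlo compensator replacement as an additional additive drift-type perturbation. Fix $d, x, h, \delta, \varepsilon, \M$ as in the statement, write $Y=Y^{x,d,h,\delta}$, $\hat Z=\hat{Z}^{x,d,h,\delta,\varepsilon,\M}$ and set $\bar G(t):=\E[\sup_{s\in[0,t]}\|Y_s-\hat Z_s\|^2]$. Subtracting the SDEs \eqref{eq:SDEEulerTruncated} and \eqref{eq:SDEEulerFinal2}, the integrated difference splits into four pieces: a drift integral involving $\beta^d(Y)-\beta_{\varepsilon,d}(\hat Z)$, a Brownian stochastic integral involving $\sigma^d(Y)-\sigma_{\varepsilon,d}(\hat Z)$, a compensated jump stochastic integral involving $\gamma^d(Y,\cdot)-\gamma_{\varepsilon,d}(\hat Z,\cdot)$ on $A_\delta$, and the Monte Carlo perturbation
\[
D_t=\int_0^t\!\Big[\int_{A_\delta}\!\gamma_{\varepsilon,d}(\hat Z_{\lfloor u-\rfloor_h},z)\nu^d(dz)-\tfrac{\nu^d(A_\delta)}{\M}\sum_{i=1}^\M\gamma_{\varepsilon,d}(\hat Z_{\lfloor u-\rfloor_h},V_{i,\lfloor u-\rfloor_h})\Big]du.
\]

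The first three pieces are handled exactly as in \eqref{eq:auxEq19}--\eqref{eq:auxEq22}: apply the triangle inequality $\|a+b+c+d\|^2\leq 4(\|a\|^2+\|b\|^2+\|c\|^2+\|d\|^2)$, use Doob's martingale inequality on the two stochastic integrals together with It\^o's isometry (respectively its jump analogue), use Cauchy--Schwarz on the drift integral, and then split $\|\beta^d(Y)-\beta_{\varepsilon,d}(\hat Z)\|^2\leq 2L\|Y-\hat Z\|^2+2\|\beta^d(\hat Z)-\beta_{\varepsilon,d}(\hat Z)\|^2$ (and analogously for $\sigma^d,\gamma^d$) so that the Lipschitz part contributes $\bar G(r)$ inside a time integral and the approximation part contributes, via Assumption~\ref{ass:NNApproxCoeff}(i), at most $\varepsilon^{4q+1}Cd^p(1+\E\|\hat Z_{\lfloor r-\rfloor_h}\|^2)$. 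Applying the moment bound \eqref{eq:momentBoundEulerFinal2} of Lemma~\ref{lem:momentBoundEulerFinal2} then turns this into a contribution of order $\varepsilon^{3q+1}d^{2p}(1+\|x\|^2)$, identical to Proposition~\ref{prop:NNApproxEuler}.

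The fourth piece is new. Applying Cauchy--Schwarz in time I get $\sup_{s\leq t}\|D_s\|^2\leq T\int_0^t\|(\,\cdot\,)_u\|^2du$, and the pointwise conditional variance of the integrand is exactly what is computed in \eqref{eq:auxEq36}: using the independence of the $V_{i,\lfloor u-\rfloor_h}$ from $\Fc_{\lfloor u-\rfloor_h}$, the fact that $\nu^d(A_\delta)\E[\gamma_{\varepsilon,d}(x,V_{1,t_1})]=\int_{A_\delta}\gamma_{\varepsilon,d}(x,z)\nu^d(dz)$, and the growth hypothesis on $\gamma_{\varepsilon,d}$,
\[
\E\Big[\Big\|\!\int_{A_\delta}\!\gamma_{\varepsilon,d}(\hat Z_{\lfloor u-\rfloor_h},z)\nu^d(dz)-\tfrac{\nu^d(A_\delta)}{\M}\!\sum_{i=1}^\M\!\gamma_{\varepsilon,d}(\hat Z_{\lfloor u-\rfloor_h},V_{i,\lfloor u-\rfloor_h})\Big\|^2\Big]\leq C\,\nu^d(A_\delta)\,\M^{-1}\big(d^p\varepsilon^{-q}+\E\|\hat Z_{\lfloor u-\rfloor_h}\|^2\big).
\]
Integrating over $[0,t]$, using \eqref{eq:AdeltaMass} to replace $\nu^d(A_\delta)$ by $\delta^{-2}\tilde L d^{\bar q}$ and \eqref{eq:momentBoundEulerFinal2} to bound the second moment of $\hat Z$ by a multiple of $(1+\|x\|^2)d^p\varepsilon^{-q}$, gives an overall contribution of order $\delta^{-2}d^{p+\bar q}\varepsilon^{-q}\M^{-1}(1+\|x\|^2)$, which is majorised by $\delta^{-2}d^{3p+\bar q}\varepsilon^{-3q}\M^{-1}(1+\|x\|^2)$ since $d\geq 1$, $\varepsilon\leq 1$, $p,q\geq 0$.

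Collecting everything yields an estimate of the form $\bar G(t)\leq A(x,d,\varepsilon,\delta,\M)+B\int_0^t\bar G(r)\,dr$ with the constant $A$ of the claimed order and $B$ depending only on $L$ and $T$. Gr\"onwall's inequality then delivers \eqref{eq:NNApproxEuler2}. The only mildly delicate point is the fourth step, where care is needed to exploit the conditional-centering of the Monte Carlo perturbation (so that one bounds a variance rather than a mean squared of size $\M$) and to justify the measurability and integrability needed to apply Cauchy--Schwarz and Lemma~\ref{lem:momentBoundEulerFinal2}; the rest is bookkeeping parallel to the preceding propositions.
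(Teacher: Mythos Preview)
Your proposal is correct and follows essentially the same approach as the paper's proof: the same four-way decomposition, the same treatment of the first three pieces via \eqref{eq:auxEq19}--\eqref{eq:auxEq22}, the same use of the conditional-variance computation \eqref{eq:auxEq36} for the Monte Carlo perturbation, the moment bound of Lemma~\ref{lem:momentBoundEulerFinal2}, and Gr\"onwall. The only cosmetic difference is that the paper bounds $G_2(t)$ via the Minkowski integral inequality (cf.\ \eqref{eq:auxEq39}) rather than your pointwise Cauchy--Schwarz-in-time estimate, and it inserts the moment bound after rather than before invoking Gr\"onwall; both orderings yield the same final bound.
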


\begin{proof} 
The proof is analogous to the proof of Proposition~\ref{prop:NNApproxEuler}. 
Fix $d \in \N$, $x \in \R^d$, $h \in (0,1)$, $\delta \in (0,1)$, 
$\varepsilon \in \rev{(0,1]}$ 
and $\M \in \N$ with $\M\geq \delta^{-2} \tilde{L} d^{\bar{q}}$. 
As before we simplify notation by writing 
$\hat{Z}=\hat{Z}^{x,d,h,\delta,\varepsilon,\M}$, $Y={Y}^{x,d,h,\delta} $.
Define
$\rev{\bar{G}}(t) := \E[\sup_{s \in [0,t]}\|{Y}_s^{x,d,h,\delta} - \hat{Z}_s^{x,d,h,\delta,\varepsilon,\M} \|^2]$ 
for $t \in [0,T]$. 
Then by the triangle inequality we obtain 
\begin{equation}\label{eq:auxEq37} \begin{aligned}
\rev{\bar{G}}(t) & \leq 4 \E\left[\sup_{s \in [0,t]}\left\|\int_0^s \beta^d(Y_{\lfloor r- \rfloor_h}) - \beta_{\varepsilon,d}(Z_{\lfloor r- \rfloor_h}) d r\right\|^2 \right]  + 4\E\left[\sup_{s \in [0,t]}\left\|\int_0^s \sigma^d(Y_{\lfloor r- \rfloor_h})-\sigma_{\varepsilon,d}(Z_{\lfloor r- \rfloor_h}) d B^d_r\right\|^2\right]  \\ & \quad +  4\E\left[\sup_{s \in [0,t]}\left\|\int_0^s \int_{A_\delta} \gamma^d(Y_{\lfloor r- \rfloor_h},z)-\gamma_{\varepsilon,d}(Z_{\lfloor r- \rfloor_h},z) \tilde{N}^d(dr,dz) \right\|^2\right]
\\ & \quad + 4 \E \left[\sup_{s \in [0,t]}\left\|\int_0^s\int_{A_\delta} \gamma_{\varepsilon,d}(\hat{Z}_{\lfloor r- \rfloor_h},z) \nu^d(dz) - \frac{\nu^d(A_\delta)}{\M}\sum_{i=1}^\M \gamma_{\varepsilon,d}(\hat{Z}_{\lfloor r- \rfloor_h},V_{i,\lfloor r- \rfloor_h}) d r\right\|^2\right].
\end{aligned}
\end{equation}
Denote the sum of the first three terms by $G_1(t)$ and the last term by $G_2(t)$. 
Then $G_1(t)$ can be handled by the precise same argument used in 
\eqref{eq:auxEq19}-\eqref{eq:auxEq21}.
For these terms we obtain the analogous upper bound to \eqref{eq:auxEq22} (up to a factor $4/3$): 
\begin{equation}\label{eq:auxEq38} 
\begin{aligned}
G_1(t) & \leq 2(4 t+16)\int_0^t  L \rev{\bar{G}}(r) dr + 2(4 t+16) \int_0^t  \varepsilon^{4q+1} C d^p (1+\E\left[\|\hat{Z}_{\lfloor r- \rfloor_h}\|^2\right]) d r .
\end{aligned}
\end{equation}
On the other hand, 
using Minkowski's integral inequality and \eqref{eq:auxEq36} we obtain
\begin{equation}\label{eq:auxEq39} \begin{aligned}
G_2(t) & \leq 4 \E \left[\sup_{s \in [0,t]}\left(\int_0^s\left\|\int_{A_\delta} \gamma_{\varepsilon,d}(\hat{Z}_{\lfloor r- \rfloor_h},z) \nu^d(dz) - \frac{\nu^d(A_\delta)}{\M}\sum_{i=1}^\M \gamma_{\varepsilon,d}(\hat{Z}_{\lfloor r- \rfloor_h},V_{i,\lfloor r- \rfloor_h}) \right\| d r\right)^2\right]
\\ & \leq  4 \E \left[\left(\int_0^t\left\|\int_{A_\delta} \gamma_{\varepsilon,d}(\hat{Z}_{\lfloor r- \rfloor_h},z) \nu^d(dz) - \frac{\nu^d(A_\delta)}{\M}\sum_{i=1}^\M \gamma_{\varepsilon,d}(\hat{Z}_{\lfloor r- \rfloor_h},V_{i,\lfloor r- \rfloor_h}) \right\| d r\right)^2\right]
\\ & \leq  4 \left(\int_0^t \E \left[\left\|\int_{A_\delta} \gamma_{\varepsilon,d}(\hat{Z}_{\lfloor r- \rfloor_h},z) \nu^d(dz) - \frac{\nu^d(A_\delta)}{\M}\sum_{i=1}^\M \gamma_{\varepsilon,d}(\hat{Z}_{\lfloor r- \rfloor_h},V_{i,\lfloor r- \rfloor_h}) \right\|^2\right]^{1/2} dr\right)^2
\\ & \leq  4 C [\nu^d(A_\delta)] \M^{-1} \left(\int_0^t  (d^p \varepsilon^{-q})^{\rev{1/2}} +\E[\|\hat{Z}_{\lfloor r- \rfloor_h}\|^2 ]^{1/2} dr\right)^2
\\ & \leq  8 C \delta^{-2} \tilde{L} d^{\bar{q}}  \M^{-1} \left(T^2 d^{p} \varepsilon^{-q} + T \int_0^t \E[\|\hat{Z}_{\lfloor r- \rfloor_h}\|^2 ] dr\right).
\end{aligned}
\end{equation}
Combining \eqref{eq:auxEq37}-\eqref{eq:auxEq39} 
and Gr\rev{\"o}nwall's inequality in the first step 
and applying \eqref{eq:momentBoundEulerFinal2}  in the second step 
we hence conclude 
(with $\tilde{c}=\max(2(4 T+16)C\max(T,1),8 C\tilde{L}\max(T^2,T))$, $a = \max(1,T \tilde{c}_7,1+T \tilde{c}_8)$)
\[\begin{aligned}
	\rev{\bar{G}}(t) & \leq \tilde{c}[\varepsilon^{4q+1}  d^p +  \delta^{-2} d^{p+\bar{q}} \varepsilon^{-q} \M^{-1} ] \left(1  + \int_0^T \E[\|\hat{Z}_{\lfloor r- \rfloor_h}\|^2 ] dr\right) \exp(2(4 T+16) L t)
\\ & \leq \tilde{c}[\varepsilon^{4q+1}  d^p +  \delta^{-2} d^{p+\bar{q}} \varepsilon^{-q} \M^{-1} ] \left(1  + T \tilde{c}_7 \|x\|^2 + T \tilde{c}_8 d^p \varepsilon^{-q} \right) \exp(2(4 T+16) L t)
\\ & \leq a \tilde{c}[\varepsilon^{3q+1}  d^{2p} +  \delta^{-2} d^{3p+\bar{q}} \varepsilon^{-3q} \M^{-1} ] \left(1  + \|x\|^2 \right) \exp(2(4 T+16) L t),
\end{aligned}
\]
which proves \eqref{eq:NNApproxEuler2} with $\tilde{c}_9 = a \tilde{c} \exp(2(4 T+16) L T) $.
\end{proof}

\section{DNN Approximations for jump-diffusion processes}
\label{sec:main}

This section contains our main results.
We start by specifying in Section~\ref{sec:Payoff} 
the assumptions on the path-dependent functional. 
Section~\ref{sec:MainRes} contains the main result of the article and its proof. 
In Section~\ref{sec:woPath} we then specialize this result to functionals 
which do not exhibit path-dependence and in Section~\ref{sec:solPIDE} we apply these results 
to provide expression rate estimates for PIDEs. 
Finally, Section~\ref{sec:Basket} provides an application to basket option pricing.
\subsection{Admissible Payoff}
\label{sec:Payoff}
\rev{Fix $k \in \N$.}
For each $d \in \N$ we consider a function $\varphi_{d} \colon \R^d \times \R^{kd} \to \R$. 
We aim at approximating the map
\begin{equation}\label{eq:priceMap}
(x,K)\mapsto \E[\varphi_{d}(X_T^{x,d},K)]
\end{equation}
by deep neural networks. 
In the context of mathematical finance $\varphi_{d}$ is a \textit{parametric European payoff} 
and the right-hand-side of \eqref{eq:priceMap} is the price at time $0$ of a derivative written 
on an asset with price $X_T^{x,d}$ and payoff $\varphi_{d}(\cdot,K)$ at maturity $T$ 
(at least of $\P$ is a risk-neutral measure). \rev{The parameter $K$ captures the characteristics of the payoff such as, e.g., the strike price of an option.}

More generally, 
we will be interested in approximating expectations of certain 
path-dependent functionals (or derivatives in a mathematical finance context) 
$\Phi_{d} \colon \mathfrak{D}([0,T],\R^d) \times \R^{kd} \to \R$, 
i.e. the map
\[
(x,K) \mapsto U_d(x,K):=\E[\Phi_{d}((X_s^{x,d})_{s \in [0,T]},K)]
\]
is to be approximated by deep neural networks. 
Here $\mathfrak{D}([0,T],\R^d)$ denotes the space of all c\`adl\`ag functions 
$y \colon [0,T] \to \R^d$ 
(also referred to as ``Skorokhod space'', see e.g.\ \cite{Jac2003}).

For our results on expression rates,
we make the following assumptions on 
the functional $\Phi_d$. 
The case of ``European payoffs'' $\varphi_d$ is a special case 
and the assumption simplifies in this case; see Remark~\ref{rmk:European} below. \rev{Recall that $T>0$ denotes a fixed time horizon} \rev{and denote by $q$ the constant from Assumption~\ref{ass:NNApproxCoeff}.}

\begin{assumption}\label{ass:NNApproxPayoff}
Assume there exist $C>0$, $p,\rev{\hat{q}}\geq 0$ and that 
for each $d \in \mathbb{N}$, $\varepsilon \in (0,1]$ there exist 
$\rev{D_{d,\varepsilon}} \in \N$, 
$0\leq t_1^{d,\varepsilon}<\ldots<t_{D_{d,\varepsilon}}^{d,\varepsilon}\leq T$, 
neural networks   
$\Phi_{\varepsilon,d} \colon \R^{d \rev{D_{d,\varepsilon}}} \times \R^{kd} \to \R$ 
and probability measures $\mu^d$ on $\R^d\times \R^{kd}$ 
so that for each $d \in \mathbb{N}$, $\varepsilon \in (0,1]$,
\begin{itemize}
\item[(i)] 
for all $K \in \R^{kd}$, $y \in \mathfrak{D}([0,T],\R^d)$
\[ \begin{aligned}
|\Phi_{d}(y,K)-\Phi_{\varepsilon,d}(y({t_1^{d,\varepsilon}}),\ldots,y({t_{D_{d,\varepsilon}}^{d,\varepsilon}}),K)|  
& \leq \varepsilon C d^p (1+\|K\|+\sup_{t \in [0,T]}\|y(t)\|)
		\\  \mathrm{size}(\Phi_{\varepsilon,d})  & \leq C d^p \varepsilon^{-\rev{\hat{q}}},
	\\ \rev{D_{d,\varepsilon}} + \mathrm{Lip}(\Phi_{\varepsilon,d}) & \leq C d^p \varepsilon^{-q},
	\end{aligned}
\]
    \item[(ii)] $\int_{\R^d\times \R^{kd}} (1+\|x\|^2+\|K\|^2) \mu^d(dx,dK) \leq C d^p$.
\end{itemize}
\end{assumption}
Here, for a function $g \colon \R^q \times \R^r \to \R$, 
the quantity $\mathrm{Lip}(g)$ is defined as 
\[
\mathrm{Lip}(g) 
= 
\sup_{\substack{(x_1,K_1),(x_2,K_2) \in \R^q\times \R^{r}\\ x_1 \neq x_2, K_1 \neq K_2}} 
\frac{|g(x_1,K_1)-g(x_2,K_2)|}{\|x_1-x_2\|+\|K_1-K_2\|}
\;.
\]
Assumption~\ref{ass:NNApproxPayoff} includes many important derivatives such as 
discrete and continuously monitored Asian options or discretely monitored barrier options.

\begin{remark} \label{rmk:European}
In the special case when $\Phi_d$ is in fact a \rev{so-called European-type payoff in financial models of baskets}, i.e., when 
$\Phi_{d}((X_s^{x,d})_{s \in [0,T]},K) = \varphi_{d}(X_T^{x,d},K)$ for some $\varphi_{d} \colon \R^d \times \R^{kd} \to \R$ then we may set $\rev{D_{d,\varepsilon}} = 1$ in Assumption~\ref{ass:NNApproxPayoff} and Assumption~\ref{ass:NNApproxPayoff}(i) reduces to the requirement that there exist neural networks  $\phi_{\varepsilon,d} \colon \R^{d} \times \R^{kd} \to \R$ such that for each $d \in \N$, $\varepsilon \in (0,1]$ it holds for all $(x,K) \in \R^d \times \R^{kd}$ that
\begin{equation}\label{eq:EuropeanPayoffCond} \begin{aligned}
|\varphi_{d}(x,K)-\phi_{\varepsilon,d}(x,K)|  & \leq \varepsilon C d^p (1+\|x\|+\|K\|)
\\  \mathrm{size}(\phi_{\varepsilon,d})  & \leq C d^p \varepsilon^{-\rev{\hat{q}}},
\\ \mathrm{Lip}(\phi_{\varepsilon,d}) & \leq C d^p \varepsilon^{-q}.
\end{aligned}
\end{equation}
\rev{
In typical applications in mathematical finance $\varphi_{d}(\cdot,K)$ represents  the payoff of a financial derivative with characteristics $K$ and written on $d$ underlyings. In many relevant examples the initial condition $\varphi_{d}$ can be represented {\it exactly} by a ReLU DNN. In this case one can choose $\phi_{\varepsilon,d} = \varphi_{d}$ for all $d \in \N$, $\varepsilon \in (0,1]$ and \eqref{eq:EuropeanPayoffCond} holds with $\rev{\hat{q}}=q=0$.  Examples include, e.g., basket call options, basket put options, call on max options, call on min options, and many more, we refer, e.g., to \cite[Lemma~4.6, Lemma~4.8, Lemma~4.12, Lemma~4.14]{HornungJentzen2018}.}
\end{remark}
\begin{remark}\label{rmk:mud}
\rev{The probability measure $\mu^d$ in Assumption~\ref{ass:NNApproxPayoff}	is the measure with respect to which the approximation error is measured. The only requirement imposed on $\mu^d$ in Assumption~\ref{ass:NNApproxPayoff} is that the second moments of $\mu^d$ grow at most polynomially in $d$ and therefore Theorem~\ref{thm:main} below holds for a wide range of measures. For example, $\mu^d$ could be chosen as the Lebesgue measure on $[0,1]^d \times [0,1]^{kd}$ or as $\lambda_{[0,1]^d} \otimes \delta_{K_d}$ for some fixed $K_d \in \R^{kd}$ and with $\lambda_{[0,1]^d}$ the Lebesgue measure on $[0,1]^d$ (analogously to \cite[Theorem~1.1]{HornungJentzen2018}). More generally, $\mu^d$ could be chosen as the normalized Lebesgue measure on $\mathcal{X}^d \times \mathcal{K}^d$ for any Borel measurable set $\mathcal{X}^d \times \mathcal{K}^d \subset \R^d \times \R^{kd}$ satisfying $\sup_{d \in \N} \sup_{(x,K) \in \mathcal{X}^d \times \mathcal{K}^d} (\|x\|^2 + \|K\|^2) < \infty$. }
	\end{remark}

\subsection{Main result}
\label{sec:MainRes}
We now turn to our main approximation result. 
Theorem~\ref{thm:main} shows the following expressivity result for the approximation of 
$U^d$ by deep neural networks: 
an approximation accuracy of $\varepsilon>0$ can be achieved 
by a neural network with size bounded at most 
polynomially in $d$ and in $\varepsilon^{-1}$. 
Hence, the neural network approximation does not suffer from the CoD.
Note that the probability measure $\mu^d$ may have atoms. 
Theorem~\ref{thm:main} can thus also be used to obtain DNN expression rates 
for $U_d(\cdot,K_d)$ for single values $K_d \in \R^{kd}$ \rev{(cf., e.g., Remark~\ref{rmk:mud} above)}.	
\begin{theorem}\label{thm:main}
Assume that 
\begin{itemize} 
\item 
the coefficients of the SDE \eqref{eq:SDEnew} satisfy the Lipschitz and growth conditions 
in Assumption~\ref{ass:LipschitzGrowth},
\item 
the jumps of the process satisfy Assumption~\ref{ass:jumps}, that is, 
either we are in the case of a L\'evy-driven SDE or the small jumps 
exhibit decay \eqref{eq:smallJumpDecay}, \eqref{eq:LevymeasureGrowth},
\item 
the coefficient and payoff functions satisfy the approximation hypothesis,
Assumptions~\ref{ass:NNApproxCoeff}, \ref{ass:NNApproxPayoff}.
\end{itemize} 
 
Then there exist constants $\kappa, \mathfrak{p}, \mathfrak{q} > 0$ and,
for any $d \in \N$ and target accuracy $\varepsilon \in (0,1]$ exist  
neural networks  
$U_{\varepsilon,d} \colon \R^d \times \R^{kd} \to \R$ 
such that 
\begin{align}
\label{eq:weightsPolynomial} 
\mathrm{size}(U_{\varepsilon,d}) &  \leq \kappa d^\mathfrak{p} \varepsilon^{-\mathfrak{q}}  \;,
\\
\left(\int_{\R^d\times \R^{kd}}  |U_d(x,K) - U_{\varepsilon,d}(x,K)|^2 \mu^d(dx,dK)\right)^{1/2}&  < \varepsilon.
\end{align}
\end{theorem}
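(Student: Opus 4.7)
The plan is to decompose the error
$U_d(x,K) - U_{\varepsilon,d}(x,K)$
into the contributions already estimated in Section~\ref{sec:prelim},
then realize the resulting approximating expectation as a sample average of DNN-emulable sample paths.
Throughout, let $\tilde X$ denote the DNN-emulable process, namely $Z^{x,d,h,0,\varepsilon}$ under Assumption~\ref{ass:jumps}(i) and $\hat Z^{x,d,h,\delta,\varepsilon,\M}$ under Assumption~\ref{ass:jumps}(ii).
Using the triangle inequality and the Lipschitz property of $\Phi_{\varepsilon,d}$ from Assumption~\ref{ass:NNApproxPayoff},
\begin{equation*}
\bigl|U_d(x,K) - \E[\Phi_{\varepsilon,d}(\tilde X_{t_1^{d,\varepsilon}},\ldots,\tilde X_{t_{D_{d,\varepsilon}}^{d,\varepsilon}},K)]\bigr|^2
\lesssim \E\bigl[|\Phi_d(X^{x,d},K)-\Phi_{\varepsilon,d}(X^{x,d}_{t_1^{d,\varepsilon}},\ldots,K)|^2\bigr]
+ \mathrm{Lip}(\Phi_{\varepsilon,d})^2 \E\bigl[\sup_{t\in[0,T]}\|X^{x,d}_t - \tilde X_t\|^2\bigr].
\end{equation*}
The first term is bounded by Assumption~\ref{ass:NNApproxPayoff}(i) and Corollary~\ref{cor:supL2}; the second splits further via Lemma~\ref{lem:Euler}, Lemma~\ref{lem:jumpTruncation}, Proposition~\ref{prop:NNApproxEuler} and Proposition~\ref{prop:NNApproxEuler2}. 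Integrating against $\mu^d$ and using Assumption~\ref{ass:NNApproxPayoff}(ii) to absorb the $(1+\|x\|^2+\|K\|^2)$ factors, one obtains an $L^2(\mu^d)$ bound that is a polynomial in $d$ times an explicit polynomial in $h,\delta,\varepsilon^{-1},\M^{-1}$.

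Next I would choose the parameters: pick the coefficient-approximation tolerance of Assumption~\ref{ass:NNApproxCoeff} to be $\varepsilon^{\alpha_1}$, take $h=\varepsilon^{\alpha_2}$, $\delta=\varepsilon^{\alpha_3}$, $\M = \lceil \varepsilon^{-\alpha_4}\rceil$ for exponents $\alpha_i>0$ depending polynomially on the fixed constants $p,q,\hat q,\bar p,\bar q$ in the hypotheses, so that every summand in the error decomposition is $\leq C d^{p'} \varepsilon^2/8$. A straightforward bookkeeping of exponents shows that such a choice exists and the resulting total $L^2(\mu^d)$ error is at most $\varepsilon/2$ (after an initial rescaling $\varepsilon\mapsto c\varepsilon d^{-p'/2}$, which only changes the polynomial powers in $d$).

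Step three is the DNN emulation of a single sample path. For a fixed realization $\omega$ of the Brownian increments $\{B^d_{t_{n+1}}-B^d_{t_n}\}$, the jump times and marks of the restricted Poisson measure on $A_\delta$, and the samples $V_{i,t_n}$, the Euler update \eqref{eq:SDEEulerFinal2} (respectively \eqref{eq:SDEEulerFinal}) makes $\tilde X_{t_{n+1}}(\omega)$ an explicit composition of affine maps with the coefficient DNNs $\beta_{\varepsilon,d},\sigma_{\varepsilon,d},\gamma_{\varepsilon,d}$ applied to $\tilde X_{t_n}(\omega)$. Parallelizing the $N=T/h$ steps and then stacking $\Phi_{\varepsilon,d}$ on top yields a DNN whose size is bounded by $N$ times the per-step size, polynomial in $d\varepsilon^{-1}$. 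Two technical points must be handled here: (a) under Assumption~\ref{ass:jumps}(ii) the number of Poisson jumps in $[0,T]$ is random with mean at most $T\delta^{-2}\tilde L d^{\bar q}$, so one must work on the high-probability event where it does not exceed, say, twice its mean, the complementary event being absorbed into the error by Markov's inequality and the second-moment bounds of Lemma~\ref{lem:momentBoundEulerFinal2}; (b) under Assumption~\ref{ass:jumps}(i), the factorization $\gamma_{\varepsilon,d}(y,z)=F_{\varepsilon,d}(y)G^d(z)$ collapses each interval's jump contribution into a single DNN evaluation of $F_{\varepsilon,d}$ multiplied by the constant vector $\int_{(t_n,t_{n+1}]}\int G^d(z)\tilde N^d(ds,dz)(\omega)$, so no jump truncation is needed.

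Finally, one replaces the expectation by a Monte Carlo average: draw $M'$ independent realizations $\omega_1,\ldots,\omega_{M'}$ and set $U_{\varepsilon,d}(x,K) = \frac{1}{M'}\sum_{j=1}^{M'}\Phi_{\varepsilon,d}(\tilde X(\omega_j)_{t_1^{d,\varepsilon}},\ldots,K)$. Fubini and a standard variance estimate (together with the Lipschitz bound of $\Phi_{\varepsilon,d}$ and the moment bounds of Lemmas~\ref{lem:momentBoundEulerFinal}, \ref{lem:momentBoundEulerFinal2}) show that for $M'$ bounded by a polynomial in $d\varepsilon^{-1}$ there exists a realization of $(\omega_j)$ making the $L^2(\mu^d)$ distance between $U_{\varepsilon,d}$ and $\E[\Phi_{\varepsilon,d}(\tilde X_{\cdot},\cdot)]$ at most $\varepsilon/2$; combined with the previous steps this gives the claimed $\varepsilon$-accuracy. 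The resulting $U_{\varepsilon,d}$, being a fixed linear combination of DNNs of identical architecture, is itself a DNN of size at most $M'$ times the per-path size, hence polynomial in $d$ and $\varepsilon^{-1}$. The main obstacle is the emulation step, in particular reconciling the random jump count with a deterministic polynomial size bound and, separately, keeping the compound error accounting tight enough that the exponents $\mathfrak p,\mathfrak q$ remain finite explicit functions of the input exponents.
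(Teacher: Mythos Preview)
Your proposal is correct and follows essentially the same route as the paper: the same error decomposition via the results of Section~\ref{sec:prelim}, the same parameter tuning, the same per-sample-path DNN emulation of the Euler recursion (with the two jump cases handled exactly as you describe), and the same Monte Carlo average plus existence-of-a-good-$\omega$ argument.

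Two points of comparison are worth noting. First, under Assumption~\ref{ass:jumps}(ii) the paper does not ``absorb'' the large-jump-count event into an error term; instead it tightens the mean-squared error to $\varepsilon^2/3$, applies Markov's inequality separately to obtain $\P(R\ge\varepsilon^2)\le 1/3$ and $\P(N^d(T,A_\delta)\ge 3T\delta^{-2}\tilde L d^{\bar q})\le 1/3$, and then picks a single $\omega$ in the complement of the union, so that both the accuracy bound and the deterministic jump-count bound (hence the DNN size bound) hold simultaneously for that $\omega$. This is cleaner than trying to control the emulation error on the bad event. Second, the $N=T/h$ Euler steps are \emph{composed}, not parallelized, and keeping the composed size linear (rather than $2^N$) in $N$ requires the refined bound $\mathrm{size}(\phi_1\odot\phi_2)\le 2\,\mathrm{size}(\phi_1)+\mathrm{size}_{\mathrm{out}}(\phi_2)+\mathrm{size}(\phi_2)$; the paper singles this out explicitly, whereas you implicitly assume it when you write ``$N$ times the per-step size''.
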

 
\begin{proof} Let $\varepsilon \in (0,1]$ be given and consider $\mathfrak{N} \in \N$, $\bar{\varepsilon} \in (0,1)$, $h \in (0,1)$, $\delta \in [0,1)$ and $\M \in \N$ to be selected later. 
Essentially the proof consists in two steps: 
in a first step we carry out various approximation procedures to construct i.i.d.\ stochastic processes 
$Z^{x,d,h,\delta,\bar{\varepsilon},\M,1},\ldots,Z^{x,d,h,\delta,\bar{\varepsilon},\M,\mathfrak{N}}$ and find $\omega \in \Omega$ such that 
\begin{equation}\label{eq:epsbound} 
\begin{aligned}
\left(\int_{\R^d\times \R^{kd}} 
\left|
U_d(x,K) - \frac{1}{\mathfrak{N}} \sum_{i=1}^\mathfrak{N} 
\Phi_{\bar{\varepsilon},d}(\mathrm{e}_{d,\bar{\varepsilon}}(Z^{x,d,h,\delta,\bar{\varepsilon},\M,i}(\omega)),K) 
\right|^2 
\mu^d(dx,dK)\right)^{1/2} 
< \varepsilon
\end{aligned}
\end{equation}
where 
$\mathrm{e}_{d,\bar{\varepsilon}}(y)
 :=
 (y({t_1^{d,\bar{\varepsilon}}}),\ldots,y({t_{D_{d,\bar{\varepsilon}}}^{d,\bar{\varepsilon}}}))$ 
for $y \in \mathfrak{D}([0,T],\R^d)$. 

In a second step we prove that 
\[
U_{\varepsilon,d}(x,K)
:= 
\frac{1}{\mathfrak{N}} 
\sum_{i=1}^\mathfrak{N} \Phi_{\bar{\varepsilon},d}(\mathrm{e}_{d,\bar{\varepsilon}}(Z^{x,d,h,\delta,\bar{\varepsilon},\M,i}(\omega)),K)
\]
is indeed a neural network with weights satisfying \eqref{eq:weightsPolynomial}.

The proof is slightly different depending on whether 
Assumption~\ref{ass:jumps}(i) or Assumption~\ref{ass:jumps}(ii) is satisfied. 
Hence, we prove the two steps separately for each case. 
In particular, 
in each of these two cases a different choice is made for 
$Z^{x,d,h,\delta,\bar{\varepsilon},\M,1},\ldots,Z^{x,d,h,\delta,\bar{\varepsilon},\M,\mathfrak{N}}$. 
In the first case this will be $\mathfrak{N}$ i.i.d.\ copies of the process introduced in \eqref{eq:SDEEulerFinal}, 
whereas in the second case instead $\mathfrak{N}$ i.i.d.\ copies of \eqref{eq:SDEEulerFinal2} will be chosen.

\textbf{The case of Assumption~\ref{ass:jumps}(i)}

Consider first the case of L\'evy-SDE, i.e.\ when Assumption~\ref{ass:jumps}(i) is satisfied. 
Let $\delta=0$, $\M=1$ and let 
$Z^{x,d,h,\delta,\bar{\varepsilon},\M,1},\ldots,Z^{x,d,h,\delta,\bar{\varepsilon},\M,\mathfrak{N}}$ 
be $\mathfrak{N}$ i.i.d.\ copies of the process 
$Z^{x,d,h,\delta,\bar{\varepsilon}}$ introduced in \eqref{eq:SDEEulerFinal}. 

\textit{Step 1:} 
Denote 
$\mathcal{Z}_i(x,K) 
 = 
 \Phi_{\bar{\varepsilon},d}(\mathrm{e}_{d,\bar{\varepsilon}}(Z^{x,d,h,\delta,\bar{\varepsilon},\M,i}),K)$, 
$\bar{\mathcal{Z}}_i(x,K) = \mathcal{Z}_i(x,K)-\Phi_{\bar{\varepsilon},d}(\mathrm{e}_{d,\bar{\varepsilon}}(0),K)$ \rev{and write $Z = Z^{x,d,h,\delta,\bar{\varepsilon}}$.}
We start by estimating the following $L^2$-approximation error.

\begin{equation}\label{eq:auxEq26} \begin{aligned}
\int_{\R^d\times \R^{kd}} 
& \E\left[\left|U_d(x,K) - \frac{1}{\mathfrak{N}} \sum_{i=1}^\mathfrak{N} \mathcal{Z}_i(x,K) \right|^2 \right] \mu^d(dx,dK) 
\\ 
& = \int_{\R^d\times \R^{kd}} \left|U_d(x,K) - \E[\mathcal{Z}_1(x,K)]\right|^2 
  + 
   \E\left[ \left|\E[\mathcal{Z}_1(x,K)]- \frac{1}{\mathfrak{N}} \sum_{i=1}^\mathfrak{N} \mathcal{Z}_i(x,K) \right|^2 \right] \mu^d(dx,dK)
\\ 
& =  \int_{\R^d\times \R^{kd}} \left|U_d(x,K) - \E[\mathcal{Z}_1(x,K)]\right|^2 
  + \frac{1}{\mathfrak{N}} \E\left[ \left|\E[\bar{\mathcal{Z}}_1(x,K)]- \bar{\mathcal{Z}}_1(x,K) \right|^2 \right] \mu^d(dx,dK).
\end{aligned}
\end{equation}
We now estimate these two terms separately. 
For the first term, 
the triangle inequality and Assumption~\ref{ass:NNApproxPayoff}(i) 
yield for any $(x,K) \in \R^d\times \R^{kd}$
\begin{equation}\label{eq:auxEq23} \begin{aligned}
& \left|U_d(x,K) - \E[\mathcal{Z}_1(x,K)]\right| \\ & \quad \leq \E[|\Phi_{d}((X_s^{x,d})_{s \in [0,T]},K)-\Phi_{\bar{\varepsilon},d}(\mathrm{e}_{d,\bar{\varepsilon}}(\rev{Z}),K)|]
\\ & \quad\leq \E[|\Phi_{d}((X_s^{x,d})_{s \in [0,T]},K)-\Phi_{\bar{\varepsilon},d}(\mathrm{e}_{d,\bar{\varepsilon}}(X^{x,d}),K)|]  +  \E[|\Phi_{\bar{\varepsilon},d}(\mathrm{e}_{d,\bar{\varepsilon}}(X^{x,d}),K)-\Phi_{\bar{\varepsilon},d}(\mathrm{e}_{d,\bar{\varepsilon}}(\rev{Z}),K)|]
\\ & \quad \leq \E\left[ \bar{\varepsilon} C d^p (1+\|x\|+\|K\|+ \sup_{t \in [0,T]}\|X_t^{x,d}\|)\right] + C d^p \bar{\varepsilon}^{-q} \E[  \|\mathrm{e}_{d,\bar{\varepsilon}}(X^{x,d})-\mathrm{e}_{d,\bar{\varepsilon}}(\rev{Z})\|]
\\ & \quad \leq \bar{\varepsilon} C d^p (1+\|x\|+\|K\|+\E[ \sup_{t \in [0,T]}\|X_t^{x,d}\|]) + C d^p \bar{\varepsilon}^{-q} \rev{D_{d,\bar{\varepsilon}}}^{1/2} \E[  \sup_{t \in [0,T]} \|X_t^{x,d}-\rev{Z}_t\|].
\end{aligned} \end{equation}
Using Lemma~\ref{lem:Euler}, 
Lemma~ \ref{lem:jumpTruncation} and 
Proposition~\ref{prop:NNApproxEuler} 
we obtain
\[
\begin{aligned} \E[ &  \sup_{t \in [0,T]} \|X_t^{x,d}-\rev{Z_t}\|] \\ &  \leq \E[  \sup_{t \in [0,T]} \|X_t^{x,d}-\bar{X}_t^{x,d,h}\|] + \E[  \sup_{t \in [0,T]} \|\bar{X}_t^{x,d,h}-Y_t^{x,d,h,\delta}\|] + \E[  \sup_{t \in [0,T]} \|Y_t^{x,d,h,\delta}-\rev{Z_t}\|]
\\ & \leq  [h(c_3 d^4 + c_4 d^2\|x\|^2)]^{1/2} + [c_5 h(d^4 + d^2\|x\|^2) + c_6 \delta^{\bar{p}} d^{\bar{q}} (\|x\|^2+d^2)]^{1/2} + [c_9 \bar{\varepsilon}^{3q+1} d^{2p} (1+  \|x\|^2)]^{1/2}.
\end{aligned}
\]

Inserting this estimate and estimate \eqref{eq:supL2} from Corollary~\ref{cor:supL2} into \eqref{eq:auxEq23} we obtain
\begin{equation}\label{eq:auxEq24} \begin{aligned}
& \left|U_d(x,K) - \E[\mathcal{Z}_1(x,K)]\right|^2 \leq 6 \bar{\varepsilon}^2 C^2 d^{2p} (\|x\|^2+\|K\|^2+\tilde{c}_3 d^2 + \tilde{c}_4 \|x\|^2) \\ & \quad + 6 C^3 d^{3p} \bar{\varepsilon}^{-3q} [h(c_3 d^4 + c_4 d^2\|x\|^2) + c_5 h(d^4 + d^2\|x\|^2) + c_6 \delta^{\bar{p}} d^{\bar{q}} (\|x\|^2+d^2) + c_9 \bar{\varepsilon}^{3q+1} d^{2p} (1+  \|x\|^2)]
\end{aligned} \end{equation}
with $\tilde{c}_3 = 2\max(\bar{c}_3,1), \tilde{c}_4 = 2\bar{c}_4$.
To estimate the second term in \eqref{eq:auxEq26}, 
we apply the Lipschitz condition and the moment bound \eqref{eq:momentBoundEulerFinal} to obtain
\begin{equation}\label{eq:auxEq25} \begin{aligned}
 \E\left[ \left|\E[\bar{\mathcal{Z}}_1(x,K)]- \bar{\mathcal{Z}}_1(x,K) \right|^2 \right] & \leq\E\left[ \left|\Phi_{\bar{\varepsilon},d}(\mathrm{e}_{d,\bar{\varepsilon}}(\rev{Z}),K) -\Phi_{\bar{\varepsilon},d}(\mathrm{e}_{d,\bar{\varepsilon}}(0),K) \right|^2 \right]
\\ & \leq C^2 d^{2p} \bar{\varepsilon}^{-2q}\E\left[ \left\|\mathrm{e}_{d,\bar{\varepsilon}}(\rev{Z}) -\mathrm{e}_{d,\bar{\varepsilon}}(0)\right\|^2 \right]
\\ & =  C^2 d^{2p} \bar{\varepsilon}^{-2q} \sum_{i=1}^{\rev{D_{d,\bar{\varepsilon}}}} \E\left[ \left\|\rev{Z_{t_{i}^{d,\varepsilon}}} \right\|^2 \right]
\\ & \leq C^3 d^{3p} \bar{\varepsilon}^{-3q}  (c_8 d^p \bar{\varepsilon}^{-q} + c_7 \|x\|^2).
\end{aligned} \end{equation}

Inserting \eqref{eq:auxEq24} and \eqref{eq:auxEq25} into \eqref{eq:auxEq26} and using the growth condition on the integral in Assumption~\ref{ass:NNApproxPayoff}(ii) we obtain
\begin{equation}\label{eq:auxEq27} \begin{aligned}
\int_{\R^d\times \R^{kd}} & \E\left[\left|U_d(x,K) - \frac{1}{\mathfrak{N}} \sum_{i=1}^\mathfrak{N} \mathcal{Z}_i(x,K) \right|^2 \right] \mu^d(dx,dK) 
\\ & \leq  \bar{c} d^r [\bar{\varepsilon}^2  + \bar{\varepsilon}^{-3q}  (2h+
\bar{\varepsilon}^{3q+1}) + \mathfrak{N}^{-1} \bar{\varepsilon}^{-4q}]
\end{aligned}\end{equation}
with $\bar{c} = 6 \max(C^3\max(1,\tilde{c}_3,1+\tilde{c}_4), C^4\max(c_3,c_4,c_5,c_6,c_9),C^4\max(c_8,c_7))$, $r = \max(5p,6p+4+\bar{q})$. Now choose $\bar{\varepsilon} = \varepsilon (\max(6 \bar{c},1) d^r)^{-1}$ and $h = \varepsilon^2 (9 \bar{c} d^r \bar{\varepsilon}^{-3q})^{-1}$,  
$\mathfrak{N} = \lceil 3 \varepsilon^{-2} \bar{c} d^r \bar{\varepsilon}^{-4q} \rceil $. With these choices, the bound in \eqref{eq:auxEq27} becomes
\begin{equation}\label{eq:auxEq28} \begin{aligned}
\E&\left[ \int_{\R^d\times \R^{kd}}  \left|U_d(x,K) - \frac{1}{\mathfrak{N}} \sum_{i=1}^\mathfrak{N} \Phi_{\bar{\varepsilon},d}(\mathrm{e}_{d,\bar{\varepsilon}}(Z^{x,d,h,\delta,\bar{\varepsilon},\M,i}),K) \right|^2  \mu^d(dx,dK) \right] < \frac{\varepsilon^2}{3} + \frac{\varepsilon^2}{3} +   \frac{\varepsilon^2}{3} = \varepsilon^2. 
\end{aligned}\end{equation}
Hence, there exists $\omega \in \Omega$ such that \eqref{eq:epsbound} holds. 

\textit{Step 2:}
  
Let $i \in \{1, \ldots,\mathfrak{N}\}$ and write $Z^{x,i}:=Z^{x,d,h,\delta,\bar{\varepsilon},\M,i}$. 
Denote by $L_t^{d,i} =  \int_{0}^t \int_{\R^d} G^d(z) \tilde{N}^{d,i}(dt,dz) $ the jump part of the ($i$-th i.i.d. copy of the) L\'evy process driving the SDE \eqref{eq:SDEnew}.  Let $\ell^\beta = \mathrm{depth}(\beta_{\bar{\varepsilon},d}) $  $\ell^\sigma_j = \mathrm{depth}(\sigma_{\bar{\varepsilon},d,j})$, $\ell^F_j = \mathrm{depth}(F_{\bar{\varepsilon},d,j})$ and set $\ell_{max} = \max(2,\ell^\beta,\ell^\sigma_1,\ldots,\ell^\sigma_d,\ell^F_1,\ldots,\ell^F_d)$. For any $\ell \in \N$ denote by $\mathcal{I}_{d,\ell}$ a $\ell$-layer ReLU-DNN that emulates the identity on $\R^d$. By \cite[Remark~2.4]{PETERSEN2018296} (see also \cite[Proposition~2.4]{Opschoor2020}) it can be chosen so that $\mathrm{size}(\mathcal{I}_{d,\ell}) \leq 2 d \ell$.

Then from \eqref{eq:SDEEulerFinal} we have for any $t$ such that $t \in [t_n,t_{n+1}]$ 
\begin{equation}\label{eq:auxEq44} \begin{aligned}
Z^{x,i}_{t} & = Z^{x,i}_{t_n} + \beta_{\bar{\varepsilon},d}(Z^{x,i}_{t_n})(t-t_n)  + \sigma_{\bar{\varepsilon},d}(Z^{x,i}_{t_n}) (B^{d,i}_{t}-B^{d,i}_{t_n})  + F_{\bar{\varepsilon},d}(Z^{x,i}_{t_n}) \int_{t_n}^t \int_{\R^d} G^d(z) \tilde{N}^{d,i}(dt,dz)
\\ & = \mathcal{I}_{d,\ell_{max}}(Z^{x,i}_{t_n}) + \beta_{\bar{\varepsilon},d}(\mathcal{I}_{d,\ell_{max}-\ell^\beta}(Z^{x,i}_{t_n}))(t-t_n)  + \sum_{j=1}^d \sigma_{\bar{\varepsilon},d,j}(\mathcal{I}_{d,\ell_{max}-\ell^\sigma_j}(Z^{x,i}_{t_n})) (B^{d,i}_{t,j}-B^{d,i}_{t_n,j}) \\ & \quad  + \sum_{j=1}^d F_{\bar{\varepsilon},d,j}(\mathcal{I}_{d,\ell_{max}-\ell^F_j}(Z^{x,i}_{t_n})) (L^{d,i}_{t,j} - L^{d,i}_{t_n,j}).
\end{aligned} 
\end{equation}
Next we use a result on compositions of DNNs, \cite[Remark~2.6]{PETERSEN2018296} (see also \cite[Proposition~2.2]{Opschoor2020}), which essentially states that the composition of a DNN $\phi_1$ with $L_1$ layers and a DNN $\phi_2$ with $L_2$ layers can be realized as a DNN $\phi:=\phi_1 \odot \phi_2$  with $L_1+L_2$ layers and $\mathrm{size}(\phi) \leq 2 (\mathrm{size}(\phi_1)+\mathrm{size}(\phi_2))$. This shows that the last line in \eqref{eq:auxEq44} can be realized as the (randomly weighted) sum of DNNs of the same depth evaluated at $Z^{x,i}_{t_n}$. A weighted sum of DNNs of the same depth $\ell_{max}$ can again be realized by a DNN of depth $\ell_{max}$ by \cite[Lemma~3.2]{GS20_925} and therefore we obtain
\begin{equation}\label{eq:auxEq45} 
\begin{aligned}
Z^{x,i}_{t}(\omega) &  = \Phi_{t}^i(Z^{x,i}_{t_n}(\omega))
\end{aligned} 
\end{equation}
for a neural network 
$\Phi^i_t \colon \R^d \to \R^d$ with neural network weights depending on
$t,t_n,\bar{\varepsilon},d,i,h$ and $\omega$ (but not on $x$) and satisfying
\begin{equation}\label{eq:auxEq46} 
\begin{aligned}
\mathrm{size}(\Phi^i_t) & \leq \mathrm{size}(\mathcal{I}_{d,\ell_{max}}) + \mathrm{size}(\beta_{\bar{\varepsilon},d} \odot \mathcal{I}_{d,\ell_{max}-\ell^\beta}) +
\sum_{j=1}^d \mathrm{size}(\sigma_{\bar{\varepsilon},d,j} \odot \mathcal{I}_{d,\ell_{max}-\ell^\sigma_j})
\\ 
& \quad + \sum_{j=1}^d \mathrm{size}(F_{\bar{\varepsilon},d,j} \odot \mathcal{I}_{d,\ell_{max}-\ell^F_j})
\\ 
& \leq (6 d + 8 d^2) \ell_{max} + 2\mathrm{size}(\beta_{\bar{\varepsilon},d} ) + 2 \sum_{j=1}^d \mathrm{size}(\sigma_{\bar{\varepsilon},d,j}) + \mathrm{size}(F_{\bar{\varepsilon},d,j}) 
\\ 
& \leq (1+ 6 d + 8 d^2)  2 C d^p \bar{\varepsilon}^{-\rev{\hat{q}}},
\end{aligned} 
\end{equation}
where in the last step we used Assumption~\ref{ass:NNApproxCoeff} 
and that w.l.o.g.\ each layer has at least one non-zero parameter. 

Iterating \eqref{eq:auxEq44}, 
applying \eqref{eq:auxEq45} in each time-step 
and using $Z^{x,i}_{0}=x$, 
we obtain for $t \in (t_n,t_{n+1}]$
\begin{equation}\label{eq:auxEq47} 
\begin{aligned}
Z^{x,i}_t(\omega) & = \Phi^i_t \circ \Phi^i_{t_n} \circ \Phi^i_{t_{n-1}} \circ \cdots \circ \Phi^i_{t_1}(x)
\\ & = \Psi^i_t(x)
\end{aligned} 
\end{equation}
with $\Psi^i_t = \Phi^i_t \odot \Phi^i_{t_n} \odot \Phi^i_{t_{n-1}} \odot \cdots \odot \Phi^i_{t_1}$.

From \cite[Proposition~2.2]{Opschoor2020} 
in fact we have the refined bound 
$\mathrm{size}(\phi_1 \odot \phi_2) \leq 2 \mathrm{size}(\phi_1)+\mathrm{size}_{out}(\phi_2)+\mathrm{size}(\phi_2) $ and the property $\mathrm{size}_{out}(\phi_1 \odot \phi_2) = \mathrm{size}_{out}(\phi_1)$ (provided that $\phi_1$ has at least one hidden layer). But each of the networks $\Phi^i_s$ has at least one hidden layer (as $\ell_{max}\geq 2$) and so by iteratively applying these properties and using \eqref{eq:auxEq46} we obtain 
\begin{equation}\label{eq:auxEq48} \begin{aligned}
\mathrm{size}(\Psi^i_t) 
& \leq 2 \mathrm{size}(\Phi^i_t) + \mathrm{size}_{out}(\Phi^i_{t_n} \odot \Phi^i_{t_{n-1}} 
                                \odot \cdots \odot \Phi^i_{t_1}) +
 \mathrm{size}(\Phi^i_{t_n} \odot \Phi^i_{t_{n-1}} \odot \cdots \odot \Phi^i_{t_1})
\\ 
& = 2 \mathrm{size}(\Phi^i_t) + \mathrm{size}_{out}(\Phi^i_{t_n}) +
\mathrm{size}(\Phi^i_{t_n} \odot \Phi^i_{t_{n-1}} \odot \cdots \odot \Phi^i_{t_1})
\\ 
& \leq 
2 \mathrm{size}(\Phi^i_t) + \mathrm{size}_{out}(\Phi^i_{t_n}) +
2 \mathrm{size}(\Phi^i_{t_n}) + \mathrm{size}_{out}(\Phi^i_{t_{n-1}}) + 
\mathrm{size}(\Phi^i_{t_{n-1}} \odot \cdots \odot \Phi^i_{t_1}) 
\\ 
& \leq \ldots
\\ & \leq 2 \mathrm{size}(\Phi^i_t) + 3 \sum_{k=1}^n \mathrm{size}(\Phi^i_{t_k}) \leq (2+3 n) (1+ 6 d + 8 d^2)  2 C d^p \bar{\varepsilon}^{-\rev{\hat{q}}}.
\end{aligned} \end{equation} 
Next, recall that $t_1^{d,\bar{\varepsilon}} < \ldots < t_{D_{d,\bar{\varepsilon}}}^{d,\bar{\varepsilon}}$. 
Denote by $\ell_j = \mathrm{depth}(\Psi^i_{t_j^{d,\bar{\varepsilon}}})$ for 
$j=1,\ldots,D$ with $D:=D_{d,\bar{\varepsilon}}$ 

and note that $\ell_j$ is non-decreasing in $j$. 
We now use \eqref{eq:auxEq47} to write 
\begin{equation}\label{eq:auxEq49} \begin{aligned}
\Phi_{\bar{\varepsilon},d}(\mathrm{e}_{d,\bar{\varepsilon}}(Z^{x,d,h,\delta,\bar{\varepsilon},\M,i}(\omega)),K) & = \Phi_{\bar{\varepsilon},d}((Z^{x,i}_{t_1^{d,\bar{\varepsilon}}}(\omega),\ldots,Z^{x,i}_{t_{D}^{d,\bar{\varepsilon}}}(\omega)) ,K) 
\\ & =[\Phi_{\bar{\varepsilon},d} \circ(\mathcal{I}_{d,\ell_D-\ell_1} \circ \Psi^i_{t_1^{d,\bar{\varepsilon}}}(x),\ldots,\Psi^i_{t_{D}^{d,\bar{\varepsilon}}}(x),\mathcal{I}_{kd,\ell_D}(K))] 
\\ & = \bar{\Psi}^i(x,K) 
\end{aligned} \end{equation} 
for the neural network $\bar{\Psi}^i = \Phi_{\bar{\varepsilon},d} \odot [((\mathcal{I}_{d,\ell_D-\ell_1} \odot \Psi^i_{t_1^{d,\bar{\varepsilon}}},\mathcal{I}_{d,\ell_D-\ell_2} \odot \Psi^i_{t_2^{d,\bar{\varepsilon}}},\ldots,\Psi^i_{t_{D}^{d,\bar{\varepsilon}}}),\mathcal{I}_{kd,\ell_D})_{\mathrm{d}}]$ where $(\phi_1,\ldots,\phi_k)$ denotes the parallelization of the $m \in \N$ neural networks $\phi_1,\ldots,\phi_m$ and $(\phi_1,\ldots,\phi_m)_{\mathrm{d}}$ is the parallelization with distinct inputs (see for instance \cite[Section~2.1]{Opschoor2020}). 
The network size is additive with respect to these operations in the sense that $\mathrm{size}((\phi_1,\ldots,\phi_m))= \sum_{i=1}^m \mathrm{size}(\phi_m)$ and $\mathrm{size}((\phi_1,\ldots,\phi_m)_{\mathrm{d}})= \sum_{i=1}^m \mathrm{size}(\phi_m)$.
The size of $\bar{\Psi}^i$ can thus be estimated using \eqref{eq:auxEq48} and \eqref{eq:auxEq49} by
\begin{equation}\label{eq:auxEq50} \begin{aligned}
\mathrm{size}(\bar{\Psi}^i) & \leq 2 \mathrm{size}(\Phi_{\bar{\varepsilon},d}) + 2 \mathrm{size}(((\mathcal{I}_{d,\ell_D-\ell_1} \odot \Psi^i_{t_1^{d,\bar{\varepsilon}}},\mathcal{I}_{d,\ell_D-\ell_2} \odot \Psi^i_{t_2^{d,\bar{\varepsilon}}},\ldots,\Psi^i_{t_{D}^{d,\bar{\varepsilon}}}),\mathcal{I}_{kd,\ell_D})_{\mathrm{d}})
\\ 
& = 2 \mathrm{size}(\Phi_{\bar{\varepsilon},d}) + 2\mathrm{size}(\mathcal{I}_{kd,\ell_D}) + 2 \mathrm{size}(\Psi^i_{t_{D}^{d,\bar{\varepsilon}}}) + 2 \sum_{j=1}^{D-1} \mathrm{size}(\mathcal{I}_{d,\ell_D-\ell_j} \odot \Psi^i_{t_{j}^{d,\bar{\varepsilon}}}) 
\\ 
& \leq 2 C d^p \bar{\varepsilon}^{-\rev{\hat{q}}} + 2kd\ell_D  + 
4 D (2+3 N) (1+ 6 d + 8 d^2)  2 C d^p \bar{\varepsilon}^{-\rev{\hat{q}}} + 4 \sum_{j=1}^{D-1} 2d(\ell_D-\ell_j)
\\ 
& \leq  
[1+(4 D+2kd+8 D d) (2+3 N) (1+ 6 d + 8 d^2) ] 2 C d^p \bar{\varepsilon}^{-\rev{\hat{q}}}
\\ & \leq \tilde{C} h^{-1} d^{2p+4} \bar{\varepsilon}^{-q-\rev{\hat{q}}},
\end{aligned} \end{equation} 
with $\tilde{C} = 4200\max(C,1)k T C$.

Inserting \eqref{eq:auxEq49} into the definition of $U_{\varepsilon,d}$ we thus obtain
\begin{equation}\label{eq:auxEq52}
U_{\varepsilon,d}(x,K)= \frac{1}{\mathfrak{N}} \sum_{i=1}^\mathfrak{N} \bar{\Psi}^i(x,K) = \tilde{\Psi}(x,K)
\end{equation}
for a neural network $\tilde{\Psi}$ e.g.\ obtained from \cite[Lemma~3.2]{GS20_925} and satisfying
\begin{equation}\label{eq:auxEq51} \begin{aligned}
\mathrm{size}( \tilde{\Psi})  \leq \sum_{i=1}^\mathfrak{N} \mathrm{size}( \bar{\Psi}^i) & \leq \tilde{C}   h^{-1} d^{2p+4} \bar{\varepsilon}^{-q-\rev{\hat{q}}}\mathfrak{N}
\\ & \leq \tilde{C}   [\varepsilon^2 (9 \bar{c} d^r \bar{\varepsilon}^{-3q})^{-1}]^{-1} d^{2p+4} \bar{\varepsilon}^{-q-\rev{\hat{q}}}(3 \varepsilon^{-2} \bar{c} d^r \bar{\varepsilon}^{-4q}+1)
\\ & \leq 18 \max(3\bar{c},1) \bar{c} \tilde{C} \max(6 \bar{c},1)^{8q+\rev{\hat{q}}}  \varepsilon^{-4-8q-\rev{\hat{q}}}   d^{2p+4+2r+8qr+\rev{\hat{q}}r}, 
\end{aligned} \end{equation} 
where we used \eqref{eq:auxEq50} in the second inequality and inserted the choices of $ \mathfrak{N}, h$ in the third inequality and the choice of 
$\bar{\varepsilon}$ in the last inequality.  Setting $\kappa := 18 \max(3\bar{c},1) \bar{c} \tilde{C} \max(6 \bar{c},1)^{8q+\rev{\hat{q}}} $, $\mathfrak{p} = 2p+4+2r+8qr+\rev{\hat{q}}r$, $\mathfrak{q}= 4+8q+\rev{\hat{q}}$ we have thus proved in \eqref{eq:auxEq52} the claimed neural network  representation and provided in \eqref{eq:auxEq51}  the polynomial bound \eqref{eq:weightsPolynomial} on its size. This concludes the proof of Step 2 and 
finishes the proof of the theorem in the case of Assumption~\ref{ass:jumps}(i).

\textbf{The case of Assumption~\ref{ass:jumps}(ii)}
	
Consider now the case when Assumption~\ref{ass:jumps}(ii) is satisfied. 
Let $Z^{x,d,h,\delta,\bar{\varepsilon},\M,1},\ldots,Z^{x,d,h,\delta,\bar{\varepsilon},\M,\mathfrak{N}}$ 
be 
i.i.d.\ copies of the process $\hat{Z}^{x,d,h,\delta,\bar{\varepsilon},\M}$ 
which was introduced in \eqref{eq:SDEEulerFinal2}. 
Recall that $\bar{\varepsilon} \in (0,1)$, $h \in (0,1)$, 
$\delta \in (0,1)$ and $\M \in \N$ are for the time being arbitrary and will be selected later.

\textit{Step 1:} 
Denoting $\mathcal{Z}_i(x,K) = \Phi_{\bar{\varepsilon},d}(\mathrm{e}_{d,\bar{\varepsilon}}(Z^{x,d,h,\delta,\bar{\varepsilon},\M,i}),K)$ and  $\bar{\mathcal{Z}}_i(x,K) = \mathcal{Z}_i(x,K)-\Phi_{\bar{\varepsilon},d}(\mathrm{e}_{d,\bar{\varepsilon}}(0),K)$ we obtain the same error decomposition \eqref{eq:auxEq26} as in the case above. 
Furthermore, for the first term we proceed by precisely the same arguments 
used to obtain \eqref{eq:auxEq23} and obtain for any $(x,K) \in \R^d\times \R^{kd}$
\begin{equation}\label{eq:auxEq40} \begin{aligned}
& \left|U_d(x,K) - \E[\mathcal{Z}_1(x,K)]\right| 
\\ & \quad \leq \bar{\varepsilon} C d^p (1+\|x\|+\|K\|+\E[ \sup_{t \in [0,T]}\|X_t^{x,d}\|]) + C d^p \bar{\varepsilon}^{-q} \rev{D_{d,\bar{\varepsilon}}}^{1/2} \E[  \sup_{t \in [0,T]} \|X_t^{x,d}-\hat{Z}_t^{x,d,h,\delta,\bar{\varepsilon},\M}\|].
\end{aligned} 
\end{equation}
As above, 
we now estimate the last expectation above using Lemma~\ref{lem:Euler}, 
Lemma~\ref{lem:jumpTruncation} and Proposition~\ref{prop:NNApproxEuler2}. 
This yields for $\M\geq \delta^{-2} \tilde{L} d^{\bar{q}}$ 
that
\[
\begin{aligned} \E[ &  \sup_{t \in [0,T]} \|X_t^{x,d}-\hat{Z}_t^{x,d,h,\delta,\bar{\varepsilon},\M}\|] \\ 
&  \leq 
\E[  \sup_{t \in [0,T]} \|X_t^{x,d}-\bar{X}_t^{x,d,h}\|] 
+   
\E[  \sup_{t \in [0,T]} \|\bar{X}_t^{x,d,h}-Y_t^{x,d,h,\delta}\|] 
+ 
\E[  \sup_{t \in [0,T]} \|Y_t^{x,d,h,\delta}-\hat{Z}_t^{x,d,h,\delta,\bar{\varepsilon},\M}\|]
\\ 
& \leq  [h(c_3 d^4 + c_4 d^2\|x\|^2)]^{1/2} + [c_5 h(d^4 + d^2\|x\|^2) + c_6 \delta^{\bar{p}} d^{\bar{q}} (\|x\|^2+d^2)]^{1/2} \\ & \quad \quad+ [\tilde{c}_9 [\bar{\varepsilon}^{3q+1}  d^{2p} 
 +  \delta^{-2} d^{3p+\bar{q}} \bar{\varepsilon}^{-3q} \M^{-1} ] \left(1  + \|x\|^2 \right)]^{1/2}.
\end{aligned}
\]
Inserting this estimate and \eqref{eq:supL2} from Corollary~\ref{cor:supL2} into \eqref{eq:auxEq40} we obtain
\begin{equation}\label{eq:auxEq41} \begin{aligned}
& \left|U_d(x,K) - \E[\mathcal{Z}_1(x,K)]\right|^2 \leq 6 \bar{\varepsilon}^2 C^2 d^{2p} (\|x\|^2+\|K\|^2+\tilde{c}_3 d^2 + \tilde{c}_4 \|x\|^2)  + 6 C^3 d^{3p} \bar{\varepsilon}^{-3q} [h(c_3 d^4 + c_4 d^2\|x\|^2)]   \\ & \quad + 6 C^3 d^{3p} \bar{\varepsilon}^{-3q}[c_5 h(d^4 + d^2\|x\|^2) + c_6 \delta^{\bar{p}} d^{\bar{q}} (\|x\|^2+d^2) +  \tilde{c}_9 (\bar{\varepsilon}^{3q+1}  d^{2p} 
+  \delta^{-2} d^{3p+\bar{q}} \bar{\varepsilon}^{-3q} \M^{-1} ) \left(1  + \|x\|^2 \right)]
\end{aligned} \end{equation}
with $\tilde{c}_3 = 2\max(\bar{c}_3,1), \tilde{c}_4 = 2\bar{c}_4$.

For the second term in the error decomposition \eqref{eq:auxEq26} one may use precisely the same arguments as in \eqref{eq:auxEq25} (but now with the moment bound \eqref{eq:momentBoundEulerFinal2} instead of \eqref{eq:momentBoundEulerFinal}), which yields

\begin{equation} \label{eq:auxEq42} \begin{aligned}
\E\left[ \left|\E[\bar{\mathcal{Z}}_1(x,K)]- \bar{\mathcal{Z}}_1(x,K) \right|^2 \right] & 
 \leq C^3 d^{3p} \bar{\varepsilon}^{-3q}  (\tilde{c}_8 d^p \bar{\varepsilon}^{-q} + \tilde{c}_7 \|x\|^2).
\end{aligned} \end{equation}

Estimates \eqref{eq:auxEq41} and \eqref{eq:auxEq42} can now be inserted into \eqref{eq:auxEq26}, this yields

\begin{equation}\label{eq:auxEq43} \begin{aligned}
\int_{\R^d\times \R^{kd}} & \E\left[\left|U_d(x,K) - \frac{1}{\mathfrak{N}} \sum_{i=1}^\mathfrak{N} \mathcal{Z}_i(x,K) \right|^2 \right] \mu^d(dx,dK) 
\\ & =  \int_{\R^d\times \R^{kd}} \left|U_d(x,K) - \E[\mathcal{Z}_1(x,K)]\right|^2 + \frac{1}{\mathfrak{N}} \E\left[ \left|\E[\bar{\mathcal{Z}}_1(x,K)]- \bar{\mathcal{Z}}_1(x,K) \right|^2 \right] \mu^d(dx,dK)
\\ & \leq \bar{c} d^{r} \int_{\R^d\times \R^{kd}}  (1+\|x\|^2+\|K\|^2) [\bar{\varepsilon}^2 + \bar{\varepsilon}^{-3q} (2h +  \delta^{\bar{p}} +   \bar{\varepsilon}^{3q+1} 
+  \delta^{-2}  \bar{\varepsilon}^{-3q} \M^{-1}) + \mathfrak{N}^{-1}  \bar{\varepsilon}^{-4q}]  \mu^d(dx,dK) 
\\ & \leq C \bar{c} d^{r+p} [\bar{\varepsilon}^2 + \bar{\varepsilon}^{-3q} (2h +  \delta^{\bar{p}} +   \bar{\varepsilon}^{3q+1} 
+  \delta^{-2}  \bar{\varepsilon}^{-3q} \M^{-1}) + \mathfrak{N}^{-1}  \bar{\varepsilon}^{-4q}] 
\end{aligned}\end{equation}
with $\bar{c} = 6\max(C^2\max(1+\tilde{c}_4,1,\tilde{c}_3),C^3\max(c_3,c_4,c_5,c_6,\tilde{c}_7,\tilde{c}_8,\tilde{c}_9))$, $r=6p+4+\bar{q}$.

Now set $\tilde{c} = C \bar{c}$, $\tilde{p}=r+p$ and choose $\bar{\varepsilon}  = \varepsilon (\max(8\sqrt{3} \tilde{c},1) d^{\tilde{p}})^{-1}$, $h = \varepsilon^2 (\max(48 \tilde{c},1) d^{\tilde{p}} \bar{\varepsilon}^{-3q})^{-1}$,  $\delta = h^{1/\bar{p}}$, $\M = \lceil \varepsilon^{-2} \delta^{-2} \bar{\varepsilon}^{-6q} d^{\max(\tilde{p},\tilde{q})}  \max(12 \tilde{c} , \tilde{L} ) \rceil   $, 
$\mathfrak{N} = \lceil 12 \varepsilon^{-2} \tilde{c} d^{\tilde{p}} \bar{\varepsilon}^{-4q} \rceil $. 

Denote by $R := \int_{\R^d\times \R^{kd}} |U_d(x,K) - \frac{1}{\mathfrak{N}} \sum_{i=1}^\mathfrak{N} \mathcal{Z}_i(x,K) |^2  \mu^d(dx,dK)$. 
Inserting these choices in the bound \eqref{eq:auxEq43} we obtain

\begin{equation}\label{eq:auxEq55} \begin{aligned}
\E&\left[ R \right] < \frac{\varepsilon^2}{9} + \frac{\varepsilon^2}{9} +   \frac{\varepsilon^2}{9} = \frac{\varepsilon^2}{3},  
\end{aligned}\end{equation}

i.e.\ that the bound \eqref{eq:auxEq28} also holds with the current choice of $Z^{x,d,h,\delta,\bar{\varepsilon},\M,i}$ and $\frac{\varepsilon^2}{3}$ instead of $\varepsilon^2$. Combining this with Markov's inequality, the fact that $N^d(\cdot,A_\delta)$ is a Poisson process with intensity $\nu^d(A_\delta)$ and estimate \eqref{eq:AdeltaMass} yields

\begin{equation}\label{eq:auxEq56} \begin{aligned}
\P\left(\{R \geq \varepsilon^2\} \cup \{ N^d(T,A_\delta) \geq 3 T\delta^{-2} \tilde{L} d^{\bar{q}} \} \right) & \leq  \frac{\E[R]}{ \varepsilon^2} + \frac{\E[N^d(T,A_\delta)]}{ 3 T\delta^{-2} \tilde{L} d^{\bar{q}} } \leq  \frac{1}{3} + \frac{\nu^d(A_\delta)}{ 3 \delta^{-2} \tilde{L} d^{\bar{q}} } \leq \frac{2}{3}.
\end{aligned}\end{equation}
Consequently, $\P(R< \varepsilon^2, N^d(T,A_\delta) < 3 T\delta^{-2} \tilde{L} d^{\bar{q}} ) \geq \frac{1}{3} >0$.
Thus, there exists $\omega \in \Omega$ such that \eqref{eq:epsbound} holds 
and, in addition, 
\begin{equation}\label{eq:PoissonBound}
N^d(T,A_\delta)(\omega) \leq 3 T\delta^{-2} \tilde{L} d^{\bar{q}}.
\end{equation}

\textit{Step 2:}
Let $i \in \{1, \ldots,\mathfrak{N}\}$ and write $Z^{x,i}:=Z^{x,d,h,\delta,\bar{\varepsilon},\M,i}$. Recall that $Z^{x,i}$ is an independent copy of the process $\hat{Z}^{x,d,h,\delta,\bar{\varepsilon},\M}$ introduced in \eqref{eq:SDEEulerFinal2}. Denote by $P_t^{d,i} = \int_{A_\delta} y N^{d,i}(t,d y)$ the compound Poisson process of jumps larger than $\delta$ associated to $Z^{x,i}$. Then for $t \in [t_n,t_{n+1}]$
\begin{equation} \label{eq:auxEq53} \begin{aligned}
Z^{x,i}_t & = Z^{x,i}_{t_n} + \beta_{\bar{\varepsilon},d}(Z^{x,i}_{t_n})(t-t_n)  + \sigma_{\bar{\varepsilon},d}(Z^{x,i}_{t_n}) (B^{d,i}_{t}-B^{d,i}_{t_n})  + \sum_{t_n \leq s \leq t}  \gamma_{\bar{\varepsilon},d}(Z^{x,i}_{t_n},\Delta P_s^{d,i}) \mathbbm{1}_{A_\delta}(\Delta P_s^{d,i}) \\ & \quad  - \frac{(t-t_n)\nu^d(A_\delta)}{\M}\sum_{m=1}^\M \gamma_{\bar{\varepsilon},d}(Z^{x,i}_{t_n},V_{m,t_n}^i),
\end{aligned} \end{equation}
where the first sum is only over finitely many non-zero summands, 
see for instance \cite[Section~4.3.2]{Applebaum2009} 
(the number of non-zero terms is $N(t,A_{\delta})(\omega)-N(s,A_{\delta})(\omega)$, which is finite due to \cite[Lemma~2.3.4]{Applebaum2009}). 
Let $\ell_{max} = \max(2,\ell^\beta,\ell^\sigma_1,\ldots,\ell^\sigma_d,\ell^P_{max},\ell^V_1,\ldots,\ell^V_\M)$ with $\ell^\beta,(\ell^\sigma_j)_{j=1,\ldots,d}$ as before, $\ell^V_j = \mathrm{depth}(\gamma_{\bar{\varepsilon},d}(\cdot,V_{j,t_n}^i(\omega)))$ for $j=1,\ldots,\M$ and $\ell^P_{max} = \max_{s \in [t_n,t]} \ell^P_s$, where $\ell^P_s :=\mathrm{depth}(\gamma_{\bar{\varepsilon},d}(\cdot,\Delta P_{s}^{d,i}(\omega)))\mathbbm{1}_{A_\delta}(\Delta P_s^{d,i}(\omega))$.  
Then precisely the same reasoning that we employed to obtain the neural network representation \eqref{eq:auxEq45} 
from \eqref{eq:auxEq44} can be applied here. 
More specifically for each $t \in [t_n,t_{n+1}]$, there exists a neural network $\Phi^i_t \colon \R^d \to \R^d$ with neural network weights depending on $t,t_n,\bar{\varepsilon},d,i,h,\delta,\M$ and $\omega$ (but not on $x$) such that for all $x \in \R^d$ the representation $Z^{x,i}_{t}(\omega)   = \Phi_{t}^i(Z^{x,i}_{t_n}(\omega))$ holds true and the number of non-zero weights can be estimated by
\begin{equation}\label{eq:auxEq54} \begin{aligned}
& \mathrm{size}(\Phi^i_t)  \leq \mathrm{size}(\mathcal{I}_{d,\ell_{max}}) + \mathrm{size}(\beta_{\bar{\varepsilon},d} \odot \mathcal{I}_{d,\ell_{max}-\ell^\beta}) +
\sum_{j=1}^d \mathrm{size}(\sigma_{\bar{\varepsilon},d,j} \odot \mathcal{I}_{d,\ell_{max}-\ell^\sigma_j})
\\ 
& \quad + \sum_{t_n \leq s \leq t} \mathbbm{1}_{A_\delta}(\Delta P_s^{d,i}(\omega)) \mathrm{size}(\gamma_{\bar{\varepsilon},d}(\cdot,\Delta P_s^{d,i}(\omega)) \odot \mathcal{I}_{d,\ell_{max}-\ell^P_s}) + \sum_{m=1}^\M \mathrm{size}(\gamma_{\bar{\varepsilon},d}(\cdot,V_{m,t_n}^i(\omega)) \odot \mathcal{I}_{d,\ell_{max}-\ell^V_m})
\\ 
& \leq 2 d \ell_{max} (3 + 2 d + 2 N^d(T,A_\delta)(\omega) + 2 \M)  + 2\mathrm{size}(\beta_{\bar{\varepsilon},d} ) + 2 \sum_{j=1}^d \mathrm{size}(\sigma_{\bar{\varepsilon},d,j}) + 2 (N^d(T,A_\delta)(\omega)+\M) \mathrm{size}(\gamma_{\bar{\varepsilon},d})
\\ 
& \leq 2 C d^p \bar{\varepsilon}^{-\rev{\hat{q}}} (1+N^d(T,A_\delta)(\omega)+\M+3d + 2 d^2 + 2d N^d(T,A_\delta)(\omega) + 2 d\M) 
\\ 
& \leq 12 C \max(1,2T\tilde{L})d^{p+\bar{q}+2} \bar{\varepsilon}^{-\rev{\hat{q}}} (1 + 2 \delta^{-2}  + \M),
\end{aligned} \end{equation}
where we used the bounds on the size from Assumption~\ref{ass:NNApproxCoeff} and employed \eqref{eq:PoissonBound} for the last step. For the remainder of the proof we can now repeat precisely the same arguments used to obtain first the neural network representation \eqref{eq:auxEq47} with a bound on the weights (obtained as in \eqref{eq:auxEq48}) 
\begin{equation}\label{eq:auxEq57} \begin{aligned}
\mathrm{size}(\Psi^i_t) & \leq (2+3 n) 12 C \max(1,2T\tilde{L})d^{p+\bar{q}+2} \bar{\varepsilon}^{-\rev{\hat{q}}} (1 + 2 \delta^{-2}  + \M).
\end{aligned} \end{equation} 
Letting $\tilde{C}_0:=12 C\max(1,2T\tilde{L})$ 
we then obtain the neural network representation \eqref{eq:auxEq49} with weights bounded (as in \eqref{eq:auxEq50}) by 
\begin{equation}\label{eq:auxEq58} \begin{aligned}
\mathrm{size}(\bar{\Psi}^i) &
\leq 2 \mathrm{size}(\Phi_{\bar{\varepsilon},d}) + 2\mathrm{size}(\mathcal{I}_{kd,\ell_D}) + 2 \mathrm{size}(\Psi^i_{t_{D_{d,\bar{\varepsilon}}}^{d,\bar{\varepsilon}}}) + 2 \sum_{j=1}^{D_{d,\bar{\varepsilon}}-1} \mathrm{size}(\mathcal{I}_{d,\ell_D-\ell_j} \odot \Psi^i_{t_{j}^{d,\bar{\varepsilon}}}) 
\\ 
& \leq 2 C d^p \bar{\varepsilon}^{-\rev{\hat{q}}} + 2kd\ell_D  + 
4 D_{d,\bar{\varepsilon}} (2+3 N) \tilde{C}_0 d^{p+\bar{q}+2} \bar{\varepsilon}^{-\rev{\hat{q}}} (1 + 2 \delta^{-2}  + \M) + 4 \sum_{j=1}^{D_{d,\bar{\varepsilon}}-1} 2d(\ell_D-\ell_j)
\\ 
& \leq  
2 C d^p \bar{\varepsilon}^{-\rev{\hat{q}}} + (4 D_{d,\bar{\varepsilon}}+2kd+8 D_{d,\bar{\varepsilon}} d) (2+3 N) \tilde{C}_0 d^{p+\bar{q}+2} \bar{\varepsilon}^{-\rev{\hat{q}}} (1 + 2 \delta^{-2}  + \M)
\\ 
& \leq  
2 C d^p \bar{\varepsilon}^{-\rev{\hat{q}}} + 14 k C (2+3 T h^{-1}) \tilde{C}_0 d^{2p+\bar{q}+3} \bar{\varepsilon}^{-q-\rev{\hat{q}}} (1 + 2 \delta^{-2}  + \M)
\\ 
& \leq  
 \tilde{C} h^{-1} \delta^{-2} d^{2p+\bar{q}+3} \bar{\varepsilon}^{-q-\rev{\hat{q}}} (1  + \M)
\end{aligned} \end{equation} 
with $\tilde{C} =60 C\max(1,7k\tilde{C}_0) \max(T,1) $.
Altogether, 
we obtain the representation \eqref{eq:auxEq52} also in this case 
for a neural network $\tilde{\Psi}$ satisfying (analogously to \eqref{eq:auxEq51}) 
the bound
\begin{equation}\label{eq:auxEq59} 
\begin{aligned}
\mathrm{size}( \tilde{\Psi}) 
& 
\leq  
\tilde{C} h^{-1} \delta^{-2} d^{2p+\bar{q}+3} \bar{\varepsilon}^{-q-\rev{\hat{q}}} (1  + \M)\mathfrak{N}
\\ 
&  
\leq 
\tilde{C} \varepsilon^{-2-\frac{8}{\bar{p}}} (\max(48 \tilde{c},1) d^{\tilde{p}} \bar{\varepsilon}^{-3q})^{1+\frac{4}{\bar{p}}} d^{2p+\bar{q}+3} \bar{\varepsilon}^{-q-\rev{\hat{q}}} (1 + \varepsilon^{-2} \bar{\varepsilon}^{-6q} d^{\max(\tilde{p},\tilde{q})}  \max(12 \tilde{c} , \tilde{L} ) )(1+12 \varepsilon^{-2} \tilde{c} d^{\tilde{p}} \bar{\varepsilon}^{-4q})
\\ 
& 
\leq 
4 \tilde{C} \max(48 \tilde{c},1)^{1+\frac{4}{\bar{p}}} \max(1,12 \tilde{c} , \tilde{L})  \varepsilon^{-6-\frac{8}{\bar{p}}} d^{2\tilde{p}+\frac{4\tilde{p}}{\bar{p}}+2p+\bar{q}+3+\max(\tilde{p},\tilde{q})} \bar{\varepsilon}^{-14q-\rev{\hat{q}}-\frac{12 q}{\bar{p}}} 
\\ 
&  
\leq 
\kappa \varepsilon^{-6-\frac{8}{\bar{p}}-14q-\rev{\hat{q}}-\frac{12q}{\bar{p}}}  
 d^{3\tilde{p}+\frac{4\tilde{p}}{\bar{p}}+\tilde{q}+2p+\bar{q}+3+ 14q\tilde{p} +\rev{\hat{q}}\tilde{p}+ \frac{12q\tilde{p}}{\bar{p}}}, 
\end{aligned} 
\end{equation} 
where we have set 
$\kappa = 4\tilde{C} \max(1,12 \tilde{c} , \tilde{L}) 
 \max(48 \tilde{c},1)^{1+\frac{4}{\bar{p}}} (\max(8\sqrt{3} \tilde{c},1))^{14q+\rev{\hat{q}}+\frac{12q}{\bar{p}}}$ 
and we used \eqref{eq:auxEq58} in the first inequality and 
inserted the choices of $ \mathfrak{N}, h, \M, \delta$ in the second inequality and the choice of 
$\bar{\varepsilon}$ in the last inequality.  
This finishes the proof of the Theorem also in the case of Assumption~\ref{ass:jumps}(ii).
\end{proof}
\rev{
\begin{remark}
The proof crucially relies on a refined bound for the size of compositions of neural networks, see \cite[Proposition~2.2]{Opschoor2020}, which guarantees that the constant appearing in \eqref{eq:auxEq48} does not grow exponentially in $d$ and $\varepsilon^{-1}$. Using instead the bound $\mathrm{size}(\phi_1 \odot \phi_2) \leq 2 (\mathrm{size}(\phi_1)+\mathrm{size}(\phi_2))$ would lead to a factor $2^n$ in  \eqref{eq:auxEq48} and thereby yield constants that grow exponentially in $d$ and $\varepsilon^{-1}$.
\end{remark}
}
\subsection{Case without path-dependence} 
\label{sec:woPath}
As a first consequence of Theorem~\ref{thm:main} we obtain a DNN approximation result for European options, i.e., functionals which only depend on the terminal value. For each $d \in \N$ let $\varphi_{d} \colon \R^d \times \R^{kd} \to \R$ be a \textit{parametric European payoff} function. In this section the function to be approximated is $U_d \colon \R^{d} \times \R^{kd} \to \R$ given by
 \[
(x,K)\mapsto U_d(x,K):=\E[\varphi_{d}(X_T^{x,d},K)].
\]
This is a special case of the situation considered in Theorem~\ref{thm:main} with $\Phi_{d}(y) = \varphi_d(y_T)$. Thus, Theorem~\ref{thm:main} can be directly applied. To facilitate reading we have written explicitly the simplifications that appear in this case for the assumption on the payoff, Assumption~\ref{ass:NNApproxPayoff}, cf. also Remark~\ref{rmk:European}. In particular, the assumptions imposed in Corollary~\ref{cor:main} are precisely the same as in Theorem~\ref{thm:main} but specialized to the case $\Phi_{d}(y) = \varphi_d(y_T)$.
\begin{corollary}\label{cor:main} 
Assume that 
\begin{itemize} 
	\item 
	the coefficients of the SDE \eqref{eq:SDEnew} satisfy the Lipschitz and growth conditions 
	in Assumption~\ref{ass:LipschitzGrowth},
	\item 
	the jumps of the process satisfy Assumption~\ref{ass:jumps}, that is, 
	either we are in the case of a L\'evy-driven SDE or the small jumps 
	exhibit decay \eqref{eq:smallJumpDecay}, \eqref{eq:LevymeasureGrowth},
	\item 
	the coefficient functions satisfy the approximation hypothesis
	Assumption~\ref{ass:NNApproxCoeff} and Assumption~ \ref{ass:NNApproxPayoff}(ii) holds, 
	\item there exist $C>0$, $p,\rev{\hat{q}}\geq 0$ and for each $d \in \mathbb{N}$, $\varepsilon \in (0,1]$ 
        there exist neural networks  $\phi_{\varepsilon,d} \colon \R^{d} \times \R^{kd} \to \R$
        such that for each $d \in \N$, $\varepsilon \in (0,1]$,
        the European payoff approximation condition \eqref{eq:EuropeanPayoffCond} holds, \rev{with $q\geq0$ as in Assumption~\ref{ass:NNApproxCoeff}}.
\end{itemize} 
Then there exist constants $\kappa, \mathfrak{p}, \mathfrak{q} > 0$ and  
neural networks $U_{\varepsilon,d} \colon \R^d \times \R^{kd} \to \R$, 
$d \in \N$, $\varepsilon \in (0,1]$ 
such that for any $d \in \N$ and target accuracy $\varepsilon \in (0,1]$ 
\begin{align}
\label{eq:weightsPolynomialcor} 
\mathrm{size}(U_{\varepsilon,d}) &  \leq \kappa d^\mathfrak{p} \varepsilon^{-\mathfrak{q}}  
\\\label{eq:errorCor}
\left(\int_{\R^d\times \R^{kd}} |U_d(x,K) - U_{\varepsilon,d}(x,K)|^2  \mu^d(dx,dK)\right)^{1/2}&  < \varepsilon.
\end{align}
\end{corollary}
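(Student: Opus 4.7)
The plan is to show that the hypotheses of Corollary~\ref{cor:main} imply the hypotheses of Theorem~\ref{thm:main} for the special choice of path-dependent functional $\Phi_d \colon \mathfrak{D}([0,T],\R^d) \times \R^{kd} \to \R$ given by $\Phi_d(y,K) := \varphi_d(y(T),K)$, and then invoke Theorem~\ref{thm:main}. All other hypotheses of Theorem~\ref{thm:main} (Assumptions~\ref{ass:LipschitzGrowth}, \ref{ass:jumps}, \ref{ass:NNApproxCoeff} and Assumption~\ref{ass:NNApproxPayoff}(ii)) coincide with those of Corollary~\ref{cor:main}. With this choice of $\Phi_d$, observe that
\[
\E[\Phi_d((X^{x,d}_s)_{s \in [0,T]},K)] = \E[\varphi_d(X_T^{x,d},K)] = U_d(x,K),
\]
so that the function approximated by Theorem~\ref{thm:main} coincides with the function to be approximated in Corollary~\ref{cor:main}.

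Thus the only remaining task is to verify Assumption~\ref{ass:NNApproxPayoff}(i) for $\Phi_d$. For each $d \in \N$ and $\varepsilon \in (0,1]$ I would set $D_{d,\varepsilon} := 1$, $t_1^{d,\varepsilon} := T$, and take $\Phi_{\varepsilon,d}(x,K) := \phi_{\varepsilon,d}(x,K)$, the DNN provided by the European payoff approximation hypothesis \eqref{eq:EuropeanPayoffCond}. Then for any $y \in \mathfrak{D}([0,T],\R^d)$ and $K \in \R^{kd}$,
\[
\begin{aligned}
|\Phi_d(y,K) - \Phi_{\varepsilon,d}(y(T),K)|
&= |\varphi_d(y(T),K) - \phi_{\varepsilon,d}(y(T),K)|
\\ & \leq \varepsilon C d^p (1 + \|y(T)\| + \|K\|)
\\ & \leq \varepsilon C d^p \bigl(1 + \|K\| + \sup_{t \in [0,T]} \|y(t)\|\bigr),
\end{aligned}
\]
which verifies the first bound in Assumption~\ref{ass:NNApproxPayoff}(i). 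The size bound $\mathrm{size}(\Phi_{\varepsilon,d}) = \mathrm{size}(\phi_{\varepsilon,d}) \leq C d^p \varepsilon^{-\hat{q}}$ follows directly from \eqref{eq:EuropeanPayoffCond}, and finally $D_{d,\varepsilon} + \mathrm{Lip}(\Phi_{\varepsilon,d}) = 1 + \mathrm{Lip}(\phi_{\varepsilon,d}) \leq 1 + Cd^p \varepsilon^{-q} \leq (C+1) d^p \varepsilon^{-q}$, so (after enlarging $C$ by $1$) Assumption~\ref{ass:NNApproxPayoff}(i) is fully verified.

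With all hypotheses of Theorem~\ref{thm:main} established, Theorem~\ref{thm:main} yields constants $\kappa,\mathfrak{p},\mathfrak{q}>0$ and neural networks $U_{\varepsilon,d}$ satisfying \eqref{eq:weightsPolynomialcor} and \eqref{eq:errorCor}, completing the proof. There is no real obstacle here: the corollary is a direct specialization, and the only content is checking that the European-style assumption \eqref{eq:EuropeanPayoffCond} encodes the path-dependent Assumption~\ref{ass:NNApproxPayoff}(i) under the trivial time-grid $D_{d,\varepsilon}=1$, $t_1^{d,\varepsilon}=T$.
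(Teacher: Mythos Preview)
Your proof is correct and follows exactly the approach the paper takes: the paper states (in the paragraph preceding the corollary and in Remark~\ref{rmk:European}) that Corollary~\ref{cor:main} is the special case $\Phi_d(y,K)=\varphi_d(y(T),K)$ of Theorem~\ref{thm:main}, with $D_{d,\varepsilon}=1$ and $t_1^{d,\varepsilon}=T$, so that Assumption~\ref{ass:NNApproxPayoff}(i) reduces to \eqref{eq:EuropeanPayoffCond}. Your verification of the three inequalities in Assumption~\ref{ass:NNApproxPayoff}(i) makes this explicit and is entirely in line with the paper's reasoning.
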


\subsection{Expression rate results for PIDEs} 
\label{sec:solPIDE}
As a second consequence of Theorem~\ref{thm:main} we obtain a DNN expression rate result 
for the solution of the PIDE~\eqref{eq:LevyPDE} (which is identical to \eqref{eq:LevyPDEnew}). 
For each $d \in \N$ let $\varphi_{d} \colon \R^d \to \R$ be a continuous function with polynomial growth. 
With Assumptions~\ref{ass:LipschitzGrowth} and \ref{ass:PointwiseLipschitz}, 
Proposition~\ref{prop:ExUniq} ensures existence of a unique viscosity solution  
with polynomial growth of the PIDE~\eqref{eq:LevyPDE}.
We denote this solution by $u_d \in C([0,T] \times\R^d,\R)$. 
The next result proves that $u_d(0,\cdot)$ can be approximated by ReLU DNNs without the CoD.
We write $\mu^d_1(dx)$ for the $x$-marginal probability measure of $\mu^d$, i.e., 
$\mu^d_1(A) = \int_{\R^{kd}} \mu^d(A,dK)$ for $A \in \mathcal{B}(\R^d)$.  
\begin{corollary}\label{cor:PIDE} 
	Assume that 
	\begin{itemize} 
		\item 
		the coefficients of the PIDE \eqref{eq:LevyPDE} satisfy the Lipschitz and growth conditions 
		in Assumptions~\ref{ass:LipschitzGrowth} and \ref{ass:PointwiseLipschitz},
		\item 
		$\gamma^d$ satisfies Assumption~\ref{ass:jumps},
		\item 
		the coefficient functions satisfy the approximation hypothesis
		Assumption~\ref{ass:NNApproxCoeff} \rev{with $q\geq0$} and Assumption~ \ref{ass:NNApproxPayoff}(ii) holds, 
		\item there exist $C>0$, $p,\rev{\hat{q}}\geq 0$ and for each $d \in \mathbb{N}$, $\varepsilon \in (0,1]$ there exist neural networks  $\phi_{\varepsilon,d} \colon \R^{d} \to \R$ such that for each $d \in \N$, $\varepsilon \in (0,1]$, $x \in \R^d$ 
	\begin{equation}\label{eq:PIDECond} \begin{aligned}
	|\varphi_{d}(x)-\phi_{\varepsilon,d}(x)|  & \leq \varepsilon C d^p (1+\|x\|+\|K\|), 
	\\  \mathrm{size}(\phi_{\varepsilon,d})  & \leq C d^p \varepsilon^{-\rev{\hat{q}}},
	\\ \mathrm{Lip}(\phi_{\varepsilon,d}) & \leq C d^p \varepsilon^{-q}.
	\end{aligned}
	\end{equation}			
	\end{itemize} 
	Then there exist constants $\kappa, \mathfrak{p}, \mathfrak{q} > 0$ and  
	neural networks $u_{\varepsilon,d} \colon \R^d \to \R$, 
	$d \in \N$, $\varepsilon \in (0,1]$ 
	such that for any $d \in \N$ and target accuracy $\varepsilon \in (0,1]$ 
	\begin{align}
	\label{eq:weightsPolynomialcorPIDE} \mathrm{size}(u_{\varepsilon,d}) &  \leq \kappa d^\mathfrak{p} \varepsilon^{-\mathfrak{q}}  \\
	\left(\int_{\R^d} |u_d(0,x) - u_{\varepsilon,d}(x)|^2  \mu^d(dx)\right)^{1/2}&  < \varepsilon.
	\end{align}
\end{corollary}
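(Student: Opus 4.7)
The plan is to reduce the statement to Corollary~\ref{cor:main} via the Feynman--Kac identification of $u_d(0,\cdot)$ as the expectation of the terminal payoff evaluated along the jump-diffusion $X^{x,d}$. First, note that Assumptions~\ref{ass:LipschitzGrowth} and \ref{ass:PointwiseLipschitz} put us in the framework of \cite{BBP1997} (cf.\ Proposition~\ref{prop:ExUniq} and the discussion in Section~\ref{sec:ExUniq}), so there is a unique viscosity solution $u_d\in C([0,T]\times\R^d,\R)$ of polynomial growth of the PIDE \eqref{eq:LevyPDE}. The Feynman--Kac representation (see, e.g., \cite{FnmKacLevyGlau}, \cite[Theorem~3.4]{BBP1997}, \cite[Chap.~6.7.2]{Applebaum2009}) then yields, for every $d\in\N$ and $x\in\R^d$,
\[
u_d(0,x) \;=\; \E[\varphi_d(X_T^{x,d})].
\]

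Next, the right-hand side fits into the setting of Corollary~\ref{cor:main} upon taking $k=0$ (so that the parameter space $\R^{kd}$ is trivial and $K$ may be suppressed). In this degenerate parametric case, the measure $\mu^d$ from Corollary~\ref{cor:main} may be identified with the measure $\mu^d$ on $\R^d$ appearing in Corollary~\ref{cor:PIDE} (formally, take the product with the trivial measure on $\R^0$); Assumption~\ref{ass:NNApproxPayoff}(ii) is inherited directly. The European payoff approximation hypothesis \eqref{eq:EuropeanPayoffCond} required by Corollary~\ref{cor:main} reduces, with $K$ suppressed, precisely to the hypothesis \eqref{eq:PIDECond} imposed here on $\phi_{\varepsilon,d}$. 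Hence all hypotheses of Corollary~\ref{cor:main} are in force.

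Applying Corollary~\ref{cor:main} yields, for each $d\in\N$ and $\varepsilon\in(0,1]$, a neural network $U_{\varepsilon,d}\colon\R^d\to\R$ (since $k=0$) with
\[
\mathrm{size}(U_{\varepsilon,d}) \le \kappa\, d^{\mathfrak{p}}\varepsilon^{-\mathfrak{q}}, \qquad
\left(\int_{\R^d}\bigl|\E[\varphi_d(X_T^{x,d})] - U_{\varepsilon,d}(x)\bigr|^2 \mu^d(dx)\right)^{1/2} < \varepsilon.
\]
Setting $u_{\varepsilon,d}:=U_{\varepsilon,d}$ and invoking the Feynman--Kac identity above establishes \eqref{eq:weightsPolynomialcorPIDE} and the asserted $L^2(\mu^d)$-error bound, with the constants $\kappa,\mathfrak{p},\mathfrak{q}$ inherited from Corollary~\ref{cor:main}.

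The only non-routine step is the invocation of Feynman--Kac in the present degenerate, non-local setting. This requires checking that $x\mapsto\E[\varphi_d(X_T^{x,d})]$ (a) is of polynomial growth in $x$ (which follows from Corollary~\ref{cor:supL2} together with the polynomial growth of $\varphi_d$) and (b) coincides with the unique viscosity solution of polynomial growth furnished by Proposition~\ref{prop:ExUniq}; this identification is precisely \cite[Theorem~3.4]{BBP1997} under Assumptions~\ref{ass:LipschitzGrowth} and \ref{ass:PointwiseLipschitz}. Once this identification is in place, the approximation statement is an immediate corollary.
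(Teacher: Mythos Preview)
Your proof is correct and follows essentially the same approach as the paper: both establish the Feynman--Kac identity $u_d(0,x)=\E[\varphi_d(X_T^{x,d})]$ via \cite[Theorem~3.4]{BBP1997} under Assumptions~\ref{ass:LipschitzGrowth} and \ref{ass:PointwiseLipschitz}, and then invoke Corollary~\ref{cor:main}. The only minor differences are that the paper spells out the BSDE representation $\mathcal{Y}_0^x=\varphi_d(X_T^{x,d})-\int_0^T\mathcal{Z}_t^x\,dW_t^d-\int_0^T\int_{\R^d}\mathcal{U}_t^x(z)\,\tilde N^d(dt,dz)$ and takes expectations (using the martingale property of the stochastic integrals) rather than citing Feynman--Kac directly, and it handles the redundant parameter by setting $U_d(x,K)=u_d(0,x)$ and passing to the $x$-marginal of $\mu^d$ rather than formally taking $k=0$ (note that the paper fixes $k\in\N$, so your $k=0$ reduction should strictly be phrased as choosing any $k\ge1$ with a point-mass $K$-marginal, which is immaterial).
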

\begin{proof} 
Fix $d \in \N$. 
Under Assumptions~\ref{ass:LipschitzGrowth}, \ref{ass:PointwiseLipschitz} we obtain from \cite[Theorem~3.4]{BBP1997} that $u_d(0,\cdot)$ has a representation in terms of stochastic integrals: for all $x \in \R^d$, $u_d(0,x) = \mathcal{Y}_0^{x}$, 
where $\mathcal{Y}_0^{x}$ is deterministic and there exist an $\R^{d \times d}$-valued progressively-measurable stochastic process $\mathcal{Z}^{x}$ and a mapping $ \mathcal{U}^{x} \colon \Omega \times [0,T] \times (\R^d \setminus \{0\}) \to \R$ with 
\begin{equation}\label{eq:StochRepr}
 \mathcal{Y}_0^{x} = \varphi_{d}(X^{x,d}_T) - \int_0^T \mathcal{Z}^{x}_t d W^d_t - \int_0^T \int_{\R^d} \mathcal{U}^{x}_t(z) \tilde{N}^d(dt,dz), 
\end{equation}
 $\E[\int_0^T \|\mathcal{Z}^{x}_t\|_{F}^2 d t ] < \infty $, $\mathcal{U}^{x}$ is $\mathcal{P} \otimes \mathcal{B}((\R^d \setminus \{0\}))$-measurable (with $\mathcal{P}$ denoting the predictable $\sigma$-algebra) and $\E[\int_0^T \int_{\R^d}  |\mathcal{U}^{x}_t(z)|^2 \nu^d(dz) d t ] < \infty $. 
These conditions guarantee that the stochastic integrals in \eqref{eq:StochRepr}
are martingales (see for instance \cite[Theorem~IV.2.2]{RevuzYor} and \cite[Theorem~4.2.3]{Applebaum2009}).
Taking expectations in \eqref{eq:StochRepr} we thus obtain
$u_d(0,x) = \mathcal{Y}_0^{x} = \E[\varphi_{d}(X^{x,d}_T)]$. 
Setting $U_d(x,K) = u_d(0,x)$ we are thus precisely in the setting of Corollary~\ref{cor:main}.
So, the claim follows from  Corollary~\ref{cor:main}. 
\end{proof}
\subsection{Application to basket option pricing}
\label{sec:Basket}
Theorem~\ref{thm:main} can be applied in 
valuation of derivative contracts on baskets in mathematical finance. 
Corollary~\ref{cor:call} shows that if market option prices are ``generated'' 
from an  (unknown) underlying market model with jumps
satisfying the Lipschitz, growth and approximation conditions 
formulated in Assumptions~\ref{ass:LipschitzGrowth}, \ref{ass:jumps}, \ref{ass:NNApproxCoeff}, 
then prices of derivative contracts can be approximated by suitable DNNs without the CoD.
\begin{corollary}\label{cor:call} 
Fix starting values $x_0^d\in \R^d$ with $\|x_0^d\| \leq C d^p $.   
Let $N \in \N$, $K_1,\ldots,K_N \in [0,\infty)$ 
and 
$w^d \in \R^d$ with $\sup_{d \in \N} \max_{i} |w^d_{i}| < \infty$ be given.
Assume that 
Assumptions~\ref{ass:LipschitzGrowth}, \ref{ass:jumps} and \ref{ass:NNApproxCoeff} are satisfied
and 
\begin{equation}\label{eq:basketPrices}
\hat{C}(T,K_i) = \E[(w^d \cdot X_T^{d,x_0^d}-K_i)_+]\;,\quad i=1,\ldots,N\;.
\end{equation} 
Then there exist constants $\kappa, \mathfrak{p}, \mathfrak{q} > 0$ and  
neural networks $\mathcal{C}_{\varepsilon,d} \colon \R \to \R$, $d \in \N$, $\varepsilon \in (0,1]$ 
such that for any $d \in \N$ and target accuracy $\varepsilon \in (0,1]$ 
\begin{align}
\label{eq:weightsPolynomialcor2} 
\mathrm{size}(\mathcal{C}_{\varepsilon,d}) &  \leq \kappa d^\mathfrak{p} \varepsilon^{-\mathfrak{q}}  
\\
\left(\frac{1}{N}\sum_{i=1}^N |\hat{C}(T,K_i) - \mathcal{C}_{\varepsilon,d}(K_i)|^2 \right)^{1/2}&  < \varepsilon.
\end{align}
\end{corollary}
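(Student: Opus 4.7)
The strategy is to specialize Corollary~\ref{cor:main} to a judiciously chosen European payoff and to a discrete probability measure $\mu^d$ concentrated at the single starting value $x_0^d$ and at the given strikes $K_1,\dots,K_N$. First, take $k=1$ in the framework of Section~\ref{sec:Payoff} (so the parameter vector lives in $\R^d$) and define the European payoff
\[
\varphi_d(x,K) := (w^d \cdot x - K_1)_+, \qquad x \in \R^d,\ K=(K_1,\dots,K_d) \in \R^d.
\]
Since $\varrho(y)=y_+$ is the ReLU, the function $\varphi_d$ is realized \emph{exactly} by a one-hidden-layer ReLU DNN $\phi_{\varepsilon,d}:=\varphi_d$ whose first affine map sends $(x,K)\mapsto w^d\cdot x - K_1$; this costs at most $d+1$ nonzero weights. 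Consequently the approximation error in \eqref{eq:EuropeanPayoffCond} is identically $0$, $\mathrm{size}(\phi_{\varepsilon,d})\leq d+2$, and, using $\sup_{d,i}|w^d_i|<\infty$,
\[
\mathrm{Lip}(\phi_{\varepsilon,d}) \leq \max(\|w^d\|,1) \leq c\, d^{1/2},
\]
so the European payoff hypothesis in Corollary~\ref{cor:main} holds with $\hat{q}=q=0$ and some $p\geq 1/2$.

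Next, embed the strikes into $\R^d$ by $\tilde{K}_i:=(K_i,0,\dots,0)\in\R^d$ and choose
\[
\mu^d := \delta_{x_0^d} \otimes \frac{1}{N}\sum_{i=1}^N \delta_{\tilde{K}_i}.
\]
This is a probability measure on $\R^d\times \R^d$, and using $\|x_0^d\|\leq Cd^p$ together with the fact that $N$ and $K_1,\dots,K_N$ are fixed independent of $d$,
\[
\int_{\R^d\times\R^d}\bigl(1+\|x\|^2+\|K\|^2\bigr)\mu^d(dx,dK)
= 1+\|x_0^d\|^2 + \frac{1}{N}\sum_{i=1}^N K_i^2
\leq C' d^{2p},
\]
which verifies Assumption~\ref{ass:NNApproxPayoff}(ii). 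All remaining hypotheses of Corollary~\ref{cor:main} are by hypothesis of the present corollary.

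Invoking Corollary~\ref{cor:main} yields constants $\kappa_0,\mathfrak{p}_0,\mathfrak{q}_0>0$ and, for every $\varepsilon\in(0,1]$ and $d\in\N$, a DNN $U_{\varepsilon,d}:\R^d\times\R^d\to\R$ with $\mathrm{size}(U_{\varepsilon,d})\leq \kappa_0 d^{\mathfrak{p}_0}\varepsilon^{-\mathfrak{q}_0}$ and
\[
\int_{\R^d\times\R^d}|U_d(x,K)-U_{\varepsilon,d}(x,K)|^2\,\mu^d(dx,dK) < \varepsilon^2,
\]
where $U_d(x,K)=\E[\varphi_d(X_T^{x,d},K)]=\E[(w^d\cdot X_T^{x,d}-K_1)_+]$. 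In particular, $U_d(x_0^d,\tilde K_i)=\hat C(T,K_i)$. Finally, I will define
\[
\mathcal{C}_{\varepsilon,d}(K) := U_{\varepsilon,d}\bigl(x_0^d,(K,0,\dots,0)\bigr), \qquad K\in\R,
\]
obtained from $U_{\varepsilon,d}$ by fixing the first $d$ inputs to $x_0^d$ and the last $d-1$ inputs of the $K$-block to $0$; this amounts to absorbing these constants into the biases of the first affine layer and does not increase $\mathrm{Size}$. Evaluating the integral above against the atomic measure $\mu^d$ then gives
\[
\frac{1}{N}\sum_{i=1}^N |\hat C(T,K_i)-\mathcal{C}_{\varepsilon,d}(K_i)|^2 < \varepsilon^2,
\]
which is the desired conclusion with $\kappa:=\kappa_0$, $\mathfrak{p}:=\mathfrak{p}_0$, $\mathfrak{q}:=\mathfrak{q}_0$. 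The only mildly delicate point is the verification that hard-coding $x_0^d$ and the auxiliary zeros into the first layer really preserves DNN size in the sense of \eqref{eq:sizedef}, but this is immediate from the affine structure of $W_1$ in \eqref{eq:phidef}.
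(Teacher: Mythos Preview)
Your proof is correct and follows essentially the same route as the paper: choose the atomic measure $\mu^d=\delta_{x_0^d}\otimes \frac{1}{N}\sum_i \delta_{K_i e_1}$, realize the basket call payoff exactly as a ReLU DNN so that \eqref{eq:EuropeanPayoffCond} holds trivially, apply Corollary~\ref{cor:main}, and then restrict the resulting DNN $U_{\varepsilon,d}$ to the affine slice $K\mapsto(x_0^d,K e_1)$. Your write-up is in fact slightly more careful than the paper's in spelling out the Lipschitz bound $\mathrm{Lip}(\varphi_d)\leq \max(\|w^d\|,1)\leq c\,d^{1/2}$ and in noting that freezing inputs only modifies the first affine layer; the one caveat is that ``does not increase $\mathrm{Size}$'' is a mild overstatement (the new bias vector can gain up to $N_1$ nonzeros), but this is dominated by the existing polynomial size bound and the paper glosses over the same point.
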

\begin{proof}
Let 
$\mu^d(dx,dK)= \delta_{\{x_0^d\}} \otimes \frac{1}{N}\sum_{i=1}^N \delta_{\{K_i e_1\}}$ 
where $e_1^d=(1,0,\ldots,0) \in \R^{kd}$. 
Then, for all $d \in \N$ it holds that 
$\int_{\R^d\times \R^{kd}} (1+\|x\|^2+\|K\|^2) \mu^d(dx,dK) 
  = 
  \frac{1}{N}\sum_{i=1}^N (1+\|x_0^d\|^2+|K_i|^2) 
  \leq C d^p$ 
and therefore Assumption~ \ref{ass:NNApproxPayoff}(ii) is satisfied. 
Furthermore, $\varphi_d(x,K) = (w^d \cdot x-K_1)_+$ 
is a ReLU DNN with $L=2$, $N_1=1$, $N_2=1$, $A^2=1$, $b^2=0$, $A^1= [(w^d)^\top,-1]$, $b^1=0$. 
Setting $\phi_{\varepsilon,d}(x,K) = \varphi_d(x,K)$ for each $\varepsilon \in (0,1]$, we obtain that 
the European payoff approximation condition \eqref{eq:EuropeanPayoffCond} holds. 
Thus, the hypotheses of Corollary~\ref{cor:main} are satisfied and therefore 
there exist constants $\kappa, \mathfrak{p}, \mathfrak{q} > 0$ and  
neural networks 
$U_{\varepsilon,d} \colon \R^d \times \R^{kd} \to \R$, $d \in \N$, $\varepsilon \in (0,1]$ 
such that for any $d \in \N$, $\varepsilon \in (0,1]$ 
condition \eqref{eq:weightsPolynomialcor} and the error estimate \eqref{eq:errorCor} hold. 
Rewriting
\[
\int_{\R^d\times \R^{kd}} |U_d(x,K) - U_{\varepsilon,d}(x,K)|^2 \mu^d(dx,dK) 
= 
\frac{1}{N}\sum_{i=1}^N |\E[\phi_{\varepsilon,d}(X_T^{x_0^d,d},e_{1} K_i)] - U_{\varepsilon,d}(x_0^d,K_i)|^2,
\]
using that 
$\E[\phi_{\varepsilon,d}(X_T^{x_0^d,d},e_1 K_i)] 
 = 
 \E[(w^d \cdot X_T^{x_0^d,d}-K_i)^+] 
 = 
 \hat{C}(T,K_i)$ and setting 
$\mathcal{C}_{\varepsilon,d}(K)=U_{\varepsilon,d}(x_0^d,e_1 K)$ 
(which is a DNN satisfying \eqref{eq:weightsPolynomialcor2}) 
then yields the claim. 
\end{proof}
\section{Conclusions}
\label{sec:Concl}
We have shown that a certain class of deep ReLU neural networks can \rev{approximate} viscosity solutions
of \rev{a suitable class of} linear partial integrodifferential equations without the CoD.
In addition, we have shown that deep ReLU NNs can approximate expectations 
of certain path-dependent functions of stochastic differential equations with jumps 
without the CoD.
Due to the rather weak assumptions (global Lipschitz and polynomial growth of the 
\rev{characteristic} triplets of the (Feller-)L\'{e}vy process), 
the main results on DNN expression rate bounds comprise a large number of 
special cases: pure diffusion, linear advection and pure jump.

The present analysis can also serve as building block in the analysis
of nonlinear cases, as considered e.g.\ in \cite{BBP1997}. 
There, Feynman-Kac type representations of viscosity solutions of 
semilinear parabolic PDEs with integrodifferential terms have been
established via backward SDEs with jumps. 
\rev{The present analysis constitutes, together with a so-called Picard-iteration approach of \cite{BeckGononAJPicard2020}, \cite{EHutzenthalerJentzenKruse2019MLP}, \cite{hutzenthaler2017multi} 
    a foundation to develop high-dimensional 
    approximation bounds to certain nonlinear PIDEs, 
    such as the recent work \cite{NeufWu2022}}.

\section*{Acknowledgement}
This project has been partially funded by the Deutsche Forschungsgemeinschaft (DFG, German
Research Foundation) – 464123384. 

{\small 
\bibliographystyle{amsalpha}
\bibliography{references}
}
\end{document}